\newtheorem{theorem}{Theorem}
\newtheorem{corollary}[theorem]{Corollary}
\newtheorem{definition}[theorem]{Definition}
\newtheorem{lemma}[theorem]{Lemma}
\newtheorem{proposition}[theorem]{Proposition}
\newtheorem{remark}[theorem]{Remark}
\newenvironment{proof}[1][Proof]{\noindent\textbf{#1.} }{\ \rule{0.5em}{0.5em}}
\begin{document}

\title{A positive-definite inner product for vector-valued Macdonald polynomials}
\author{Charles F. Dunkl\thanks{E-mail address: \texttt{cfd5z@virginia.edu}}\\Dept. of Mathematics \\University of Virginia \\Charlottesville VA}
\date{15 August 2018}
\maketitle

\begin{abstract}
In a previous paper J.-G. Luque and the author \{Sem. Loth. Combin. 2011)
developed the theory of nonsymmetric Macdonald polynomials taking values in an
irreducible module of the Hecke algebra of the symmetric group $\mathcal{S}%
_{N}$. The polynomials are parametrized by $\left(  q,t\right)  $ and are
simultaneous eigenfunctions of a commuting set of Cherednik operators, which
were studied by Baker and Forrester (IMRN 1997). In the Dunkl-Luque paper
there is a construction of a pairing between $\left(  q^{-1},t^{-1}\right)  $
polynomials and $\left(  q,t\right)  $ polynomials, and for which the
Macdonald polynomials form a biorthogonal set. The present work is a sequel
with the purpose of constructing a symmetric bilinear form for which the
Macdonald polynomials form an orthogonal basis and to determine the region of
$\left(  q,t\right)  $-values for which the form is positive-definite.
Irreducible representations of the Hecke algebra are characterized by
partitions of $N$. The positivity region depends only on the maximum
hook-length of the Ferrers diagram of the partition.

\end{abstract}

\section{Introduction}

The theory of nonsymmetric Jack polynomials was generalized by Griffeth
\cite{G2010} to polynomials on the complex reflection groups of type $G\left(
n,p,N\right)  $ taking values in irreducible modules of the groups. This
theory simplifies somewhat for the group $G\left(  1,1,N\right)  $, the
symmetric group of $N$ objects, where any irreducible module is spanned by
standard Young tableaux all of the same shape, corresponding to a partition of
$N$. Luque and the author \cite{DL2012} developed an analogous theory for
vector-valued Macdonald polynomials taking values in irreducible modules of
the Hecke algebra of a symmetric group. The structure has parameters $\left(
q,t\right)  $ and depends on a commuting set of Cherednik operators whose
simultaneous eigenfunctions are the aforementioned Macdonald polynomials. The
paper showed how to construct the polynomials by means of a Yang-Baxter graph
(see Lascoux \cite{L2001}). Also a bilinear form was defined which paired
polynomials for the parameters $\left(  q^{-1},t^{-1}\right)  $ with those
parametrized by $\left(  q,t\right)  $ and resulted in biorthogonality
relations for the Macdonald polynomials. The present paper is a sequel whose
aim is to define a symmetric bilinear form for which these polynomials are
mutually orthogonal. Some other natural conditions are imposed on the form to
force uniqueness. The form is positive-definite for a $\left(  q,t\right)
$-region determined by the specific module.

For purposes of illustration the form is first defined for the scalar case,
and leads to expressions only slightly different from the well-known
hook-product formulas. Section \ref{VVMD} contains a short outline of
representation theory of the Hecke algebra, the Yang-Baxter graph of
vector-valued Macdonald polynomials and the process leading to the definition
of the symmetric bilinear form, followed by the characterization of $\left(
q,t\right)  $-values yielding positivity of the form. The details of the
construction of the polynomials and related operators along with the proofs of
their properties are found in \cite{DL2012}.

\subsection{Notation}

Let $\mathbb{N}_{0}:=\left\{  0,1,2,3,\ldots\right\}  $. The elements of
$\mathbb{N}_{0}^{N}$ are called \textit{compositions,} and for\textit{
}$\alpha=\left(  \alpha_{1},\ldots,\alpha_{N}\right)  \in\mathbb{N}_{0}^{N}$
let $\left\vert \alpha\right\vert :=\sum_{i=1}^{N}\alpha_{i}$. Let
$\mathbb{N}_{0}^{N,+}$ denote the set of partitions $\left\{  \lambda
\in\mathbb{N}_{0}^{N}:\lambda_{1}\geq\lambda_{2}\geq\ldots\geq\lambda
_{N}\right\}  $ and let $\alpha^{+}$ denote the nonincreasing rearrangement of
$\alpha$, for example if $\alpha=\left(  1,2,1,4\right)  $ then $\alpha
^{+}=\left(  4,2,1,1\right)  $. There are two partial orders on compositions
used in this work: for $\alpha,\beta\in\mathbb{N}_{0}^{N}$ the relation
$\alpha\succ\beta$ means $\alpha\neq\beta$ and $\sum_{i=1}^{j}\left(
\alpha_{i}-\beta_{i}\right)  \geq0$ for $1\leq j\leq N$ (the dominance order),
and $\alpha\vartriangleright\beta$ means $\left\vert \alpha\right\vert
=\left\vert \beta\right\vert $ and $\alpha^{+}\succ\beta^{+}$, or $\alpha
^{+}=\beta^{+}$ and $\alpha\succ\beta$. For a composition $\alpha\in
\mathbb{N}_{0}^{N}$ the inversion number is $\mathrm{inv}\left(
\alpha\right)  :=\#\left\{  \left(  i,j\right)  :1\leq i<j\leq N,\alpha
_{i}<\alpha_{j}\right\}  $. If $\alpha_{i}<\alpha_{i+1}$ then $\mathrm{inv}%
\left(  \alpha.s_{i}\right)  =\mathrm{inv}\left(  \alpha\right)  -1$. The rank
function for $\alpha\in\mathbb{N}_{0}^{N}$ is%
\[
r_{\alpha}\left(  i\right)  :=\#\left\{  j:\alpha_{j}>\alpha_{i}\right\}
+\#\left\{  j:1\leq j\leq i,\alpha_{j}=\alpha_{i}\right\}  ,~1\leq i\leq N;
\]
then $\alpha=\alpha^{+}$ if and only if $r_{\alpha}\left(  i\right)  =i$ for
all $i$.

The symmetric group $\mathcal{S}_{N}$ is generated by the adjacent
transpositions $s_{i}:=\left(  i,i+1\right)  $ for $1\leq i<N$, where $s_{i}$
acts on an $N$-tuple $a=\left(  a_{1},\ldots,a_{N}\right)  $ by $a.s_{i}%
=\left(  \ldots,a_{i+1},a_{i},\ldots\right)  $, interchanging entries $\#i$
and $\#\left(  i+1\right)  $.

The space of polynomials is $\mathcal{P}:=\mathbb{K}\left[  x_{1},x_{2}%
,\ldots,x_{N}\right]  $, where $\mathbb{K}:=\mathbb{Q}\left(  q,t\right)  $
and $q,t$ are transcendental or generic, that is, complex numbers satisfying
$q\neq1,q^{a}t^{b}\neq1$ for $a,b\in\mathbb{Z}$ and $-N\leq b\leq N$. For
$\alpha\in\mathbb{N}_{0}^{N}$ the \textit{monomial} $x^{\alpha}:=\prod
\limits_{i=1}^{N}x_{i}^{\alpha_{i}}$. The polynomials homogeneous of degree
$n$ are defined as $\mathcal{P}_{n}=\mathrm{span}_{\mathbb{K}}\left\{
x^{\alpha}:\left\vert \alpha\right\vert =n\right\}  $ for $n=0,1,2,\ldots$.
The group $\mathcal{S}_{N}$ acts on polynomials by permutation of coordinates,
$p\left(  x\right)  \rightarrow\left(  ps_{i}\right)  \left(  x\right)
:=p\left(  x.s_{i}\right)  $.

The Hecke algebra $\mathcal{H}_{N}\left(  t\right)  $ is the associative
algebra generated by\linebreak\ $\left\{  T_{1},T_{2},\ldots,T_{N-1}\right\}
$ subject to the relations%
\begin{gather}
\left(  T_{i}+1\right)  \left(  T_{i}-t\right)  =0,\label{Hrel}\\
T_{i}T_{i+1}T_{i}=T_{i+1}T_{i}T_{i+1},1\leq i\leq N-2,\nonumber\\
T_{i}T_{j}=T_{j}T_{i},1\leq i<j-1\leq N-2.\nonumber
\end{gather}
The quadratic relation implies $T_{i}^{=1}=\frac{1}{t}\left(  T_{i}%
+1-t\right)  \in\mathcal{H}_{N}\left(  t\right)  $. For generic $t$ there is a
linear isomorphism $\mathbb{K}\mathcal{S}_{N}\rightarrow\mathcal{H}_{N}\left(
t\right)  $ generated by $s_{i}\rightarrow T_{i}$.

For $p\in\mathcal{P}$ and $1\leq i<N$ define%
\begin{equation}
p\left(  x\right)  T_{i}:=\left(  1-t\right)  x_{i+1}\frac{p\left(  x\right)
-p\left(  x.s_{i}\right)  }{x_{i}-x_{i+1}}+tp\left(  x.s_{i}\right)  .
\label{pxTi}%
\end{equation}
It can be shown straightforwardly that these operators satisfy the defining
relations of $\mathcal{H}_{N}\left(  t\right)  $. Also $ps_{i}=p$ (symmetry in
$\left(  x_{i},x_{i+1}\right)  $) if and only if $pT_{i}=tp$ (because
$pT_{i}-tp=\dfrac{tx_{i}-x_{i+1}}{x_{i}-x_{i+1}}\left(  p-ps_{i}\right)  $),
and $pT_{i}=-p$ if and only if $p\left(  x\right)  =\left(  tx_{i}%
-x_{i+1}\right)  p_{0}\left(  x\right)  $ where $p_{0}\in\mathcal{P}$ and
$p_{0}s_{i}=p_{0}$. Also $x_{i}T_{i}=x_{i+1}$ and $1T_{i}=t$.

\section{\label{SMD}Scalar nonsymmetric Macdonald polynomials}

For $f\in\mathcal{P}$ define shift, Cherednik and Dunkl operators by (see
Baker and Forrester \cite{BF1997}, also \cite{DL2012})%
\begin{align}
fw\left(  x\right)   &  :=f\left(  qx_{N},x_{1},x_{2},\ldots,x_{N-1}\right)
,\nonumber\\
f\xi_{i}  &  :=t^{i-1}fT_{i-1}^{-1}T_{i-2}^{-1}\cdots T_{1}^{-1}%
wT_{N-1}T_{N-2}\cdots T_{i}\label{defxi0}\\
f\mathcal{D}_{N}  &  :=\left(  f-f\xi_{N}\right)  /x_{N},~f\mathcal{D}%
_{i}:=\frac{1}{t}fT_{i}\mathcal{D}_{i+1}T_{i}.\nonumber
\end{align}
Note $\xi_{i}=\frac{1}{t}T_{i}\xi_{i+1}T_{i}$. It is a nontrivial result that
$D_{i}$ maps $\mathcal{P}_{n}$ to $\mathcal{P}_{n-1}$. The operators $\xi_{i}$
commute with each other and there is a basis of simultaneous eigenfunctions,
the nonsymmetric Macdonald polynomials $M_{\alpha}$, labeled by $\alpha
\in\mathbb{N}_{0}^{N}$ with $\vartriangleright$-leading term $\,q^{\ast
}t^{\ast}x^{\alpha}$ ($q^{\ast}t^{\ast}$ denotes integer powers of $q,t$) such
that%
\begin{align*}
M_{\alpha}\left(  x\right)   &  =q^{\ast}t^{\ast}x^{\alpha}+\sum
_{\alpha\vartriangleright\beta}A_{\alpha\beta}\left(  q,t\right)  x^{\beta}\\
M_{\alpha}\xi_{i}  &  =q^{\alpha_{i}}t^{N-r_{\alpha}\left(  i\right)
}M_{\alpha},1\leq i\leq N;
\end{align*}
where the coefficients $A_{\alpha\beta}\left(  q,t\right)  $ are rational
functions of $\left(  q,t\right)  $ whose denominators are of the form
$\left(  1-q^{a}t^{b}\right)  $. The \textit{spectral vector} is
$\zeta_{\alpha}\left(  i\right)  =q^{\alpha_{i}}t^{N-r_{\alpha}\left(
i\right)  }$, $1\leq i\leq N$. There is a simple relation between $M_{\alpha}$
and $M_{\alpha.s_{i}}$ when $\alpha_{i}<\alpha_{i+1}$ and $\rho=\zeta_{\alpha
}\left(  i+1\right)  /\zeta_{\alpha}\left(  i\right)  =q^{\alpha_{i+1}%
-\alpha_{i}}t^{r_{\alpha}\left(  i\right)  -r_{\alpha}\left(  i+1\right)  }$%
\begin{align}
M_{\alpha}T_{i}  &  =M_{\alpha s_{i}}-\frac{1-t}{1-\rho}M_{\alpha}%
,\label{MT1}\\
M_{\alpha.s_{i}}T_{i}  &  =\frac{\left(  1-\rho t\right)  \left(
t-\rho\right)  }{\left(  1-\rho\right)  ^{2}}M_{\alpha}+\frac{\rho\left(
1-t\right)  }{\left(  1-\rho\right)  }M_{\alpha.s_{i}}, \label{MT2}%
\end{align}
and $\zeta_{\alpha.s_{i}}=\zeta_{\alpha}.s_{i}$. If $\alpha_{i}=\alpha_{i+1}$
then%
\begin{equation}
M_{\alpha}T_{i}=tM_{\alpha}. \label{MT3}%
\end{equation}

The other step needed to construct any $M_{\alpha}$ starting from $1$ is the
affine step%
\begin{align}
M_{\alpha\Phi}  &  =x_{N}\left(  M_{\alpha}w\right)  ,\label{MT4}\\
\alpha\Phi &  :=\left(  \alpha_{2},\alpha_{3},\ldots,\alpha_{N},\alpha
_{1}+1\right)  ,\nonumber\\
\zeta_{\alpha\Phi}  &  =\left(  \zeta_{\alpha}\left(  2\right)  ,\ldots
,\zeta_{\alpha}\left(  N\right)  ,q\zeta_{\alpha}\left(  1\right)  \right)
\nonumber
\end{align}

Formulas (\ref{MT1}) and (\ref{MT4}) can be interpreted as edges of a
Yang-Baxter graph for generating the polynomials (see Lascoux \cite[Sect.
9]{L2001}). This graph has the root $\left(  \boldsymbol{0},\left[
t^{N-i}\right]  _{i=1}^{N},1\right)  $ and nodes $\left(  \alpha,\zeta
_{\alpha},M_{\alpha}\right)  $. There are steps $\left(  \alpha,\zeta_{\alpha
},M_{\alpha}\right)  \overset{s_{i}}{\rightarrow}\left(  \alpha.s_{i}%
,\zeta_{\alpha}.s_{i},M_{\alpha.s_{i}}\right)  $ for $\alpha_{i+1}>\alpha_{i}$
given by $M_{\alpha s_{i}}=M_{\alpha}T_{i}+\frac{1-t}{1-\zeta_{\alpha}\left(
i+1\right)  /\zeta_{\alpha}\left(  i\right)  }M_{\alpha}$ and affine steps
$\left(  \alpha,\zeta_{\alpha},M_{\alpha}\right)  \overset{\Phi}{\rightarrow
}\left(  \alpha\Phi,\zeta_{\alpha\Phi},M_{\alpha\Phi}\right)  $ (given by
(\ref{MT4}) ).

There is a short proof using Macdonald polynomials that $\mathcal{D}_{N}$ maps
$\mathcal{P}_{N}$ to $\mathcal{P}_{N-1}$: when $\alpha_{N}=0$ then $r_{\alpha
}\left(  N\right)  =N,\zeta_{\alpha}\left(  N\right)  =1$ and $M_{\alpha}%
\xi_{N}=M_{\alpha}$ thus $M_{\alpha}\mathcal{D}_{N}=0$; if $\alpha_{N}\geq1$
then the raising (affine) formula is $M_{\alpha}\left(  x\right)
=x_{N}M_{\beta}w\left(  x\right)  $ where $\beta=\left(  \alpha_{N}%
-1,\alpha_{1},\alpha_{2},\ldots,\alpha_{N-1}\right)  $ thus\linebreak%
\ $M_{\alpha}\left(  1-\xi_{N}\right)  =\left(  1-\zeta_{\alpha}\left(
N\right)  \right)  M_{\alpha}$ which is divisible by $x_{N}$.

Our logical outline is to first state a number of hypotheses to be satisfied
by the inner product, then deduce consequences leading to a formula which is
used as a definition. To finish one has to show that the hypotheses are
satisfied. The presentation is fairly sketchy for the scalar case which is
mostly intended as illustration. The material for vector-valued Macdonald
polynomials is more detailed.

The \textbf{hypotheses} (BF1) for the symmetric bilinear form $\left\langle
\cdot,\cdot\right\rangle $ on $\mathcal{P}$, with $w^{\ast}:=T_{N-1}%
^{-1}\cdots T_{1}^{-1}wT_{N-1}\cdots T_{1}$, are (for $f,g\in\mathcal{P}$)
\begin{align}
\left\langle 1,1\right\rangle  &  =1,\label{BF1.1}\\
\left\langle fT_{i},g\right\rangle  &  =\left\langle f,gT_{i}\right\rangle
,~1\leq i<N\label{BF1.2}\\
\left\langle f\xi_{N},g\right\rangle  &  =\left\langle f,g\xi_{N}\right\rangle
,\label{BF1.3}\\
\left\langle f\mathcal{D}_{N},g\right\rangle  &  =\left(  1-q\right)
\left\langle f,x_{N}\left(  gw^{\ast}w\right)  \right\rangle . \label{BF1.4}%
\end{align}
From the definition of $w=T_{N-1}^{-1}\cdots T_{1}^{-1}\xi_{1}$ it follows
that%
\begin{align*}
\left\langle f,gw^{\ast}\right\rangle  &  =\left\langle f,gT_{N-1}^{-1}\cdots
T_{1}^{-1}\xi_{1}\right\rangle =\left\langle f\xi_{1},gT_{N-1}^{-1}\cdots
T_{1}^{-1}\right\rangle \\
&  =\left\langle f\xi_{1}T_{1}\cdots T_{N-1},g\right\rangle =\left\langle
fw,g\right\rangle .
\end{align*}
(It is a trivial exercise to show $\left\langle fT_{i}^{-1},g\right\rangle
=\left\langle f,gT_{i}^{-1}\right\rangle $.) Technically there is a problem in
defining the adjoint of a general operator on $\mathcal{P}$ because
$\mathcal{P}$ is infinite dimensional and it need not be true that the map
$f\mapsto\left\langle f,\cdot\right\rangle $ is one-to-one into the dual
space; here $w^{\ast}$ is taken as a symbolic name. It follows from
(\ref{BF1.2}), (\ref{BF1.3}) and $\xi_{i}=\frac{1}{t}T_{i}\xi_{i+1}T_{i}$ that
$\left\langle f\xi_{i},g\right\rangle =\left\langle f,g\xi_{i}\right\rangle $
for all $f,g\in\mathcal{P}$ and all $i$. This implies the mutual orthogonality
of $\left\{  M_{\alpha}:\alpha\in\mathbb{N}_{0}^{N}\right\}  $ because the
spectral vector $\zeta_{\alpha}$ determines $\alpha$. Implicitly
$t\in\mathbb{R}$ since the eigenvalues of~$T_{i}$ are $t,-1$. If $\deg
f\neq\deg g$ then $\left\langle f,g\right\rangle =0$ because the Macdonald
polynomials form a homogeneous basis. For convenience denote $\left\langle
f,f\right\rangle =\left\Vert f\right\Vert ^{2}$ (no claim is being made about positivity).

\begin{definition}
For $z\in\mathbb{K}$ let%
\[
u\left(  z\right)  :=\frac{\left(  t-z\right)  \left(  1-zt\right)  }{\left(
1-z\right)  ^{2}}.
\]

\end{definition}

Note that $u\left(  z^{-1}\right)  =u\left(  z\right)  $.

\begin{proposition}
Suppose (BF1) holds, $a_{i}<\alpha_{i+1}$ and $\rho=q^{\alpha_{i+1}-\alpha
_{i}}t^{r_{\alpha}\left(  i\right)  -r_{\alpha}\left(  i+1\right)  }$ then%
\[
\left\Vert M_{\alpha.s_{i}}\right\Vert ^{2}=u\left(  \rho\right)  \left\Vert
M_{\alpha}\right\Vert ^{2}%
\]

\end{proposition}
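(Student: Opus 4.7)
The plan is to combine the two braid-type relations (\ref{MT1}) and (\ref{MT2}) with the self-adjointness of $T_i$ from hypothesis (\ref{BF1.2}), and to use the orthogonality $\langle M_\alpha,M_{\alpha.s_i}\rangle=0$ that follows from $\xi_i$-adjointness (which was derived in the text right after the hypotheses).

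First I would compute $\langle M_\alpha T_i, M_{\alpha.s_i}\rangle$ in two ways. Using (\ref{MT1}) on the left factor gives
\[
\langle M_\alpha T_i, M_{\alpha.s_i}\rangle = \langle M_{\alpha.s_i}, M_{\alpha.s_i}\rangle - \frac{1-t}{1-\rho}\langle M_\alpha, M_{\alpha.s_i}\rangle = \|M_{\alpha.s_i}\|^2,
\]
the second term vanishing by orthogonality (since $\alpha\neq\alpha.s_i$ and the joint $\xi_i$-eigenvalues differ). Moving $T_i$ to the right factor via (\ref{BF1.2}) and using (\ref{MT2}) yields
\[
\langle M_\alpha, M_{\alpha.s_i} T_i\rangle = \frac{(1-\rho t)(t-\rho)}{(1-\rho)^2}\|M_\alpha\|^2 + \frac{\rho(1-t)}{1-\rho}\langle M_\alpha, M_{\alpha.s_i}\rangle,
\]
where again the cross term vanishes. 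Equating the two expressions and recognizing the coefficient as $u(\rho)$ gives the desired identity.

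There is really no serious obstacle here: the calculation is a direct application of the adjointness hypothesis. The one point that must be addressed is the justification of $\langle M_\alpha, M_{\alpha.s_i}\rangle = 0$. This follows because $\xi_i$ is self-adjoint for every $i$ (deduced from (\ref{BF1.2}), (\ref{BF1.3}) and $\xi_i = t^{-1} T_i \xi_{i+1} T_i$), and the spectral vectors $\zeta_\alpha$ and $\zeta_{\alpha.s_i} = \zeta_\alpha.s_i$ differ in the $i$th coordinate whenever $\alpha_i \neq \alpha_{i+1}$, so the two Macdonald polynomials lie in distinct joint eigenspaces of the commuting family $\{\xi_1,\ldots,\xi_N\}$. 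Thus the pairing between them must vanish.
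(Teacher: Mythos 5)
Your proof is correct and follows exactly the paper's own argument: compute $\left\langle M_{\alpha}T_{i},M_{\alpha.s_{i}}\right\rangle$ two ways using (\ref{MT1}), (\ref{MT2}) and the self-adjointness of $T_{i}$, with the cross terms killed by the $\xi$-orthogonality of the Macdonald polynomials. Your added justification of $\left\langle M_{\alpha},M_{\alpha.s_{i}}\right\rangle =0$ via distinct spectral vectors is the same reasoning the paper gives in the discussion preceding the proposition.
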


\begin{proof}
From equation (\ref{MT1}) $\left\langle M_{\alpha}T_{i},M_{\alpha.s_{i}%
}\right\rangle =\left\Vert M_{\alpha.s_{i}}\right\Vert ^{2}$ (by hypothesis
$\left\langle M_{\alpha},M_{\alpha.s_{i}}\right\rangle =0$) and by equation
(\ref{MT2}) $\left\langle M_{\alpha}T_{i},M_{\alpha.s_{i}}\right\rangle
=\left\langle M_{\alpha},M_{\alpha.s_{i}}T_{i}\right\rangle =\frac{\left(
1-\rho t\right)  \left(  t-\rho\right)  }{\left(  1-\rho\right)  ^{2}%
}\left\Vert M_{\alpha}\right\Vert ^{2}=u\left(  \rho\right)  \left\Vert
M_{\alpha}\right\Vert ^{2}$.
\end{proof}

\begin{definition}
For $\alpha\in\mathbb{N}_{0}^{N}$ let%
\[
\mathcal{E}\left(  \alpha\right)  :=\prod\limits_{1\leq i<j\leq N,\alpha
_{i}<\alpha_{j}}u\left(  q^{\alpha_{j}-\alpha_{i}}t^{r_{\alpha}\left(
i\right)  -r_{\alpha}\left(  j\right)  }\right)  .
\]

\end{definition}

\begin{proposition}
\label{e-norm1}Suppose (BF1) holds and $\alpha\in\mathbb{N}_{0}^{N}$ then
$\left\Vert M_{\alpha^{+}}\right\Vert ^{2}=\mathcal{E}\left(  \alpha\right)
\left\Vert M_{\alpha}\right\Vert ^{2}$.
\end{proposition}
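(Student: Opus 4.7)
The plan is to proceed by induction on the inversion number $\mathrm{inv}(\alpha)$. The base case $\mathrm{inv}(\alpha)=0$ is immediate: then $\alpha = \alpha^{+}$, the product defining $\mathcal{E}(\alpha)$ is empty (the condition $\alpha_{i}<\alpha_{j}$ for $i<j$ never holds), so $\mathcal{E}(\alpha)=1$ and the claimed equation is trivial.

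For the inductive step, when $\mathrm{inv}(\alpha)>0$ I would pick some $i$ with $\alpha_{i}<\alpha_{i+1}$ and set $\alpha' = \alpha.s_{i}$. Then $(\alpha')^{+}=\alpha^{+}$ and $\mathrm{inv}(\alpha')=\mathrm{inv}(\alpha)-1$, so the inductive hypothesis gives $\lVert M_{\alpha^{+}}\rVert^{2}=\mathcal{E}(\alpha')\lVert M_{\alpha'}\rVert^{2}$. The previous proposition yields $\lVert M_{\alpha'}\rVert^{2}=u(\rho)\lVert M_{\alpha}\rVert^{2}$ with $\rho=q^{\alpha_{i+1}-\alpha_{i}}t^{r_{\alpha}(i)-r_{\alpha}(i+1)}$. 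Combining these reduces the proposition to the combinatorial identity
\[
\mathcal{E}(\alpha)=u(\rho)\,\mathcal{E}(\alpha').
\]

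To establish this identity, I would first record two bookkeeping facts: (a) $r_{\alpha'}(k)=r_{\alpha}(k)$ for $k\notin\{i,i+1\}$, while $r_{\alpha'}(i)=r_{\alpha}(i+1)$ and $r_{\alpha'}(i+1)=r_{\alpha}(i)$ (since $\alpha_{i}\neq\alpha_{i+1}$, swapping them just swaps the ranks at those positions and leaves every other rank unaffected); and (b) the ``inversion'' pairs $(k,l)$ with $k<l$ and $\alpha_{k}<\alpha_{l}$ not meeting $\{i,i+1\}$ contribute identical factors to $\mathcal{E}(\alpha)$ and $\mathcal{E}(\alpha')$. Then I would analyze the pairs of the form $(i,l)$ and $(i+1,l)$ for $l>i+1$ (and symmetrically $(k,i),(k,i+1)$ for $k<i$) by cases according to the relative order of $\alpha_{l}$ with $\alpha_{i}$ and $\alpha_{i+1}$. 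In each case one checks that the union of contributions from the two pairs is the same in $\alpha$ and in $\alpha'$: in the ``sandwich'' case $\alpha_{i}<\alpha_{l}\leq\alpha_{i+1}$ the inversion migrates from $(i,l)$ in $\alpha$ to $(i+1,l)$ in $\alpha'$ but carries the same factor after applying (a); the two-inversion and zero-inversion cases are handled identically.

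After this cancellation the only surviving discrepancy is the pair $(i,i+1)$ itself, which is an inversion of $\alpha$ contributing exactly $u(\rho)$ and is not an inversion of $\alpha'$. This gives $\mathcal{E}(\alpha)=u(\rho)\mathcal{E}(\alpha')$ and closes the induction. The main obstacle is the case-analysis for pairs $\{i,l\},\{i+1,l\}$; it is routine but one must be careful with ties (the case $\alpha_{l}=\alpha_{i+1}$ or $\alpha_{l}=\alpha_{i}$), and the $r_{\alpha}(i)\leftrightarrow r_{\alpha}(i+1)$ swap under $s_{i}$ is what makes the two possible labelings of the same factor match up.
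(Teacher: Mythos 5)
Your proposal is correct and follows essentially the same route as the paper: induction on $\mathrm{inv}(\alpha)$, reducing the step to the identity $\mathcal{E}(\alpha)=u(\rho)\,\mathcal{E}(\alpha.s_{i})$, which is verified by matching factors for pairs away from $\{i,i+1\}$, observing that pairs meeting exactly one of $i,i+1$ get interchanged under the rank swap, and isolating the single surviving factor from $(i,i+1)$. Your write-up merely spells out the rank-swap bookkeeping in more detail than the paper does.
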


\begin{proof}
Arguing by induction on $\mathrm{inv}\left(  \alpha\right)  $ it suffices to
show that $\alpha_{i}<\alpha_{i+1}$ implies $\mathcal{E}\left(  \alpha\right)
/\mathcal{E}\left(  \alpha.s_{i}\right)  =u\left(  q^{\alpha_{i+1}-\alpha_{i}%
}t^{r_{\alpha}\left(  i\right)  -r_{\alpha}\left(  i+1\right)  }\right)  $.
The factors corresponding to pairs $\left(  l,j\right)  $ with $l,j\neq i,i+1$
are the same in the products, and the pairs with just one of $i,i+1$ are
interchanged in $\mathcal{E}\left(  \alpha\right)  ,\mathcal{E}\left(
\alpha.s_{i}\right)  $. There is only one factor in $\mathcal{E}\left(
\alpha\right)  $ that is not in $\mathcal{E}\left(  \alpha.s_{i}\right)  $,
namely $u\left(  q^{\alpha_{i+1}-\alpha_{i}}t^{r_{\alpha}\left(  i\right)
-r_{\alpha}\left(  i+1\right)  }\right)  $ coming from $\left(  i,i+1\right)
$. Thus $\mathcal{E}\left(  \alpha\right)  \left\Vert M_{\alpha}\right\Vert
^{2}=\mathcal{E}\left(  \alpha.s_{i}\right)  \left\Vert M_{\alpha.s_{i}%
}\right\Vert ^{2}$.
\end{proof}

\begin{lemma}
Suppose $\alpha\in\mathbb{N}_{0}^{N}$ then $M_{\alpha\Phi}\mathcal{D}%
_{N}=\left(  1-q\zeta_{\alpha}\left(  1\right)  \right)  M_{\alpha}w$.
\end{lemma}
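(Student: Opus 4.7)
The plan is to combine the affine raising formula with the eigenvalue identity for $\xi_N$ acting on $M_{\alpha\Phi}$, since both pieces of information are already recorded in (\ref{MT4}).

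First I would invoke (\ref{MT4}) to write $M_{\alpha\Phi}(x) = x_N (M_\alpha w)(x)$, so that dividing by $x_N$ is legitimate and yields $M_\alpha w$. Next I would use that $M_{\alpha\Phi}$ is a simultaneous eigenfunction of the $\xi_i$'s with spectral vector $\zeta_{\alpha\Phi}$; the last component of (\ref{MT4}) gives $\zeta_{\alpha\Phi}(N) = q\zeta_\alpha(1)$, so
\[
M_{\alpha\Phi}\xi_N = q\zeta_\alpha(1)\, M_{\alpha\Phi}.
\]

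Finally I would plug these into the definition $f\mathcal{D}_N := (f - f\xi_N)/x_N$, getting
\[
M_{\alpha\Phi}\mathcal{D}_N = \frac{M_{\alpha\Phi} - q\zeta_\alpha(1) M_{\alpha\Phi}}{x_N} = (1 - q\zeta_\alpha(1))\,\frac{M_{\alpha\Phi}}{x_N} = (1 - q\zeta_\alpha(1))\, M_\alpha w,
\]
which is the claim. There is no real obstacle here; the only thing worth flagging is to confirm that $x_N$ genuinely divides $M_{\alpha\Phi}$ (so the division is a polynomial identity rather than merely a rational one), which is exactly what (\ref{MT4}) supplies. This argument parallels the short proof already sketched in the excerpt that $\mathcal{D}_N$ sends $\mathcal{P}_n$ to $\mathcal{P}_{n-1}$.
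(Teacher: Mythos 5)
Your argument is correct and is essentially identical to the paper's own proof: both apply the definition $f\mathcal{D}_N=(f-f\xi_N)/x_N$, use the eigenvalue $\zeta_{\alpha\Phi}(N)=q\zeta_\alpha(1)$, and then invoke the affine formula (\ref{MT4}) to replace $M_{\alpha\Phi}/x_N$ by $M_\alpha w$. Your added remark about $x_N$ dividing $M_{\alpha\Phi}$ is a sensible (and correct) point of care that the paper leaves implicit.
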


\begin{proof}
By definition $M_{\alpha\Phi}\mathcal{D}_{N}=\left(  1/x_{N}\right)
M_{\alpha\Phi}\left(  1-\xi_{N}\right)  =\left(  1/x_{N}\right)  \left(
1-\zeta_{\alpha\Phi}\left(  N\right)  \right)  M_{\alpha\Phi}=\left(
1-q\zeta_{\alpha}\left(  1\right)  \right)  M_{\alpha}w$.
\end{proof}

\begin{remark}
It is incompatible with (\ref{BF1.1}), (\ref{BF1.2}) and (\ref{BF1.3}) to
require either $\left\langle f\mathcal{D}_{N},g\right\rangle =c\left\langle
f,x_{N}g\right\rangle $ with some constant $c$, or $\left\langle x_{N}%
f,x_{N}g\right\rangle =\left\langle f,g\right\rangle $ . Let $f=M_{\alpha\Phi
}$ and $g=M_{\beta}w$ with $\left\vert \alpha\right\vert =\left\vert
\beta\right\vert +1$; then $\left\langle f,x_{N}g\right\rangle =\left\langle
M_{\alpha\Phi},M_{\beta\Phi}\right\rangle $ while $\left\langle f\mathcal{D}%
_{N},g\right\rangle =\left(  1-q\zeta_{\alpha}\left(  1\right)  \right)
\left\langle M_{\alpha}w,M_{\beta}w\right\rangle =\left(  1-q\zeta_{\alpha
}\left(  1\right)  \right)  \left\langle M_{\alpha},M_{\beta}ww^{\ast
}\right\rangle $. If $\alpha\neq\beta$ then $\left\langle f,x_{N}%
g\right\rangle =0$ but in general $\left\langle M_{\alpha},M_{\beta}ww^{\ast
}\right\rangle \neq0$; for example $\alpha=\left(  1,0,0,0\right)  $ and
$\beta=\left(  0,1,0,0\right)  $. For the second part let $f=M_{\alpha}w$ so
that $\left\langle x_{N}f,x_{N}g\right\rangle =\left\langle M_{\alpha\Phi
},M_{\beta\Phi}\right\rangle $, while $\left\langle f,g\right\rangle
=\left\langle M_{\alpha},M_{\beta}ww^{\ast}\right\rangle $.
\end{remark}

\begin{proposition}
\label{phinorm1}Suppose (BF1) holds and $\alpha\in\mathbb{N}_{0}^{N}$ then
$\left\Vert M_{\alpha\Phi}\right\Vert ^{2}=\dfrac{1-q\zeta_{\alpha}\left(
1\right)  }{1-q}\left\Vert M_{\alpha}\right\Vert ^{2}$.
\end{proposition}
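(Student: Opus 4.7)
The plan is to apply hypothesis (\ref{BF1.4}) in a way that lets the preceding lemma collapse one side to a multiple of $\|M_\alpha\|^2$ while the other side collapses to a multiple of $\|M_{\alpha\Phi}\|^2$. The trick is to choose the test polynomials so that the awkward composition $w^*w$ appearing on the right-hand side of (\ref{BF1.4}) telescopes away. Specifically, I would apply (\ref{BF1.4}) with $f = M_{\alpha\Phi}$ and $g = M_\alpha (w^*)^{-1}$, where $w^* = T_{N-1}^{-1}\cdots T_1^{-1} w T_{N-1}\cdots T_1$ is taken as a literal composition of invertible operators on $\mathcal{P}$ (the $T_i$ are invertible by the quadratic Hecke relation, and $w$ is the invertible cyclic substitution $f(x_1,\ldots,x_N) \mapsto f(qx_N, x_1,\ldots,x_{N-1})$), so $(w^*)^{-1}$ is a bona fide operator and $M_\alpha(w^*)^{-1}$ is a genuine polynomial.

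On the right-hand side of (\ref{BF1.4}) the composition $(w^*)^{-1} w^* w$ simplifies by operator cancellation to $w$, so
\[
x_N\bigl(M_\alpha(w^*)^{-1} w^* w\bigr) = x_N M_\alpha w = M_{\alpha\Phi}
\]
by the affine formula (\ref{MT4}), and the right-hand side becomes $(1-q)\|M_{\alpha\Phi}\|^2$. On the left-hand side, the preceding lemma gives $M_{\alpha\Phi}\mathcal{D}_N = (1-q\zeta_\alpha(1)) M_\alpha w$, so this side becomes $(1-q\zeta_\alpha(1))\langle M_\alpha w, M_\alpha(w^*)^{-1}\rangle$. Applying the derived adjoint identity $\langle fw, g\rangle = \langle f, gw^*\rangle$ with $f = M_\alpha$ and $g = M_\alpha(w^*)^{-1}$ collapses the product $(w^*)^{-1} w^*$ to the identity, yielding $(1-q\zeta_\alpha(1))\|M_\alpha\|^2$. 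Equating the two sides and dividing by $1-q$ gives the stated formula.

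The only potentially delicate point is the introduction of $(w^*)^{-1}$, which the text flags as having a "symbolic" interpretation as the adjoint of $w$. In fact no trouble arises here: $w^*$, as defined explicitly above, is a genuine invertible operator on $\mathcal{P}$, and $(w^*)^{-1}$ appears only inside the polynomial argument $M_\alpha(w^*)^{-1}$, where it is cancelled by an adjacent factor of $w^*$ via straightforward operator composition before any adjointness property has to be invoked. The adjoint identity $\langle fw, g\rangle = \langle f, gw^*\rangle$ itself is used only once, between bona fide polynomials, and is precisely the derived consequence of (\ref{BF1.2})--(\ref{BF1.3}) recorded just before the definition of $u(z)$.
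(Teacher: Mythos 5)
Your proof is correct and follows essentially the same route as the paper's: both apply (\ref{BF1.4}) with $f=M_{\alpha\Phi}$ and the test polynomial chosen so that $gw^{\ast}=M_{\alpha}$, use the preceding lemma together with $\left\langle fw,g\right\rangle =\left\langle f,gw^{\ast}\right\rangle$ on one side, and cancel $\left(w^{\ast}\right)^{-1}w^{\ast}w$ to $w$ on the other. Your added remark that $w^{\ast}$ is a genuine invertible operator (so $M_{\alpha}\left(w^{\ast}\right)^{-1}$ is a bona fide polynomial) is a sound clarification of a point the paper leaves implicit.
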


\begin{proof}
Let $g\in\mathcal{P}$ with $\deg g=\left\vert \alpha\right\vert $ then by the
previous lemma
\[
\left\langle M_{\alpha\Phi}\mathcal{D}_{N},g\right\rangle =\left(
1-q\zeta_{\alpha}\left(  1\right)  \right)  \left\langle M_{\alpha
}w,g\right\rangle =\left(  1-q\zeta_{\alpha}\left(  1\right)  \right)
\left\langle M_{\alpha},gw^{\ast}\right\rangle .
\]
Specialize to $gw^{\ast}=M_{\alpha}$ to obtain%
\[
\left\langle M_{\alpha\Phi}\mathcal{D}_{N},M_{\alpha}\left(  w^{\ast}\right)
^{-1}\right\rangle =\left(  1-q\zeta_{\alpha}\left(  1\right)  \right)
\left\langle M_{\alpha},M_{\alpha}\right\rangle .
\]
By (\ref{BF1.4}) $\left\langle M_{\alpha\Phi}\mathcal{D}_{N},M_{\alpha}\left(
w^{\ast}\right)  ^{-1}\right\rangle =\left(  1-q\right)  \left\langle
M_{\alpha\Phi},x_{N}\left(  M_{\alpha}\left(  w^{\ast}\right)  ^{-1}\right)
w^{\ast}w\right\rangle =\left(  1-q\right)  \left\langle M_{\alpha\Phi
},M_{\alpha\Phi}\right\rangle $. This completes the proof.
\end{proof}

Next we use (BF1) to derive a formula for $\left\Vert M_{\lambda}\right\Vert
^{2}$ for any $\lambda\in\mathbb{N}_{0}^{N,+}$. Suppose $\lambda_{m}\geq1$ and
$\lambda_{i}=0$ for $i>m$. Let $\alpha=\left(  \lambda_{1},\ldots
,\lambda_{m-1},0,\ldots0,\lambda_{m}\right)  $, $\beta=\left(  \lambda
_{m}-1,\lambda_{1},\ldots,\lambda_{m-1},0,\ldots\right)  $ so that
$\alpha=\beta\Phi$, and $\gamma=\beta^{+}=\left(  \lambda_{1},\ldots
,\lambda_{m-1},\lambda_{m}-1,0,\ldots\right)  $. Then $\left\Vert M_{\lambda
}\right\Vert ^{2}=\mathcal{E}\left(  \alpha\right)  \left\Vert M_{\alpha
}\right\Vert ^{2}$, $\left\Vert M_{\alpha}\right\Vert ^{2}=\frac
{1-q\zeta_{\beta}\left(  1\right)  }{1-q}\left\Vert M_{\beta}\right\Vert ^{2}$
and $\left\Vert M_{\gamma}\right\Vert ^{2}=\mathcal{E}\left(  \beta\right)
\left\Vert M_{\beta}\right\Vert ^{2}$. The rank vectors for $\alpha,\beta$ are
$\left(  \ldots,m+1,\ldots,N,m\right)  $ and $\left(  m,1,2,\ldots
,m-1,m+1\ldots\right)  $ respectively. Then%
\begin{align*}
\mathcal{E}\left(  \alpha\right)   &  =\prod\limits_{i=m+1}^{N}u\left(
q^{\lambda_{m}}t^{i-m}\right)  =t^{N-m}\frac{\left(  1-q^{\lambda_{m}}\right)
\left(  1-q^{\lambda_{m}}t^{N-m+1}\right)  }{\left(  1-q^{\lambda_{m}%
}t\right)  \left(  1-q^{\lambda_{m}}t^{N-m}\right)  },\\
\zeta_{\beta}\left(  1\right)   &  =q^{\lambda_{m}-1}t^{N-m},\\
\mathcal{E}\left(  \beta\right)   &  =\prod\limits_{i=1}^{m-1}u\left(
q^{\lambda_{i}-\lambda_{m}+1}t^{m-i}\right)
\end{align*}
and%
\[
\left\Vert M_{\lambda}\right\Vert ^{2}=\frac{1-q^{\lambda_{m}}t^{N-m}}%
{1-q}\frac{\mathcal{E}\left(  \alpha\right)  }{\mathcal{E}\left(
\beta\right)  }\left\Vert M_{\gamma}\right\Vert ^{2}.
\]
(The product $\mathcal{E}\left(  \alpha\right)  $ telescopes. If $m=N$ then
$\mathcal{E}\left(  \alpha\right)  =1.$) This is the key ingredient for an
inductive argument. Denote the transpose of (the Ferrers diagram) $\lambda
\in\mathbb{N}_{0}^{N,+}$ by $\lambda^{\prime}$, so that $\mathrm{arm}\left(
\lambda;i,j\right)  =\lambda_{i}-j$ and $\mathrm{leg}\left(  \lambda
;i,j\right)  =\lambda_{j}^{\prime}-i$ and the hook product%
\[
h_{q,t}\left(  \lambda;z\right)  :=\prod\limits_{\left(  i,j\right)
\in\lambda}\left(  1-zq^{\mathrm{arm}\left(  i,j\right)  }t^{\mathrm{leg}%
\left(  i,j\right)  }\right)  .
\]
The changes in the hook product going from $\lambda$ to $\gamma$ come from the
hooks at $\left\{  \left(  i,\lambda_{m}\right)  :1\leq i\leq m-1\right\}  $
and $\left\{  \left(  m,j\right)  :1\leq j\leq\lambda_{m}\right\}  $, thus%
\[
\frac{h_{q,t}\left(  \lambda;z\right)  }{h_{q,t}\left(  \gamma;z\right)
}=\prod\limits_{i=1}^{m-1}\frac{1-zq^{\lambda_{i}-\lambda_{m}}t^{m-i}%
}{1-zq^{\lambda_{i}-\lambda_{m}}t^{m-i-1}}\left(  1-zq^{\lambda_{m}-1}\right)
,
\]
because $\prod\limits_{j=1}^{\lambda_{m}-1}\frac{1-zq^{\lambda_{m}-j}%
}{1-zq^{\lambda_{m}-j-1}}\left(  1-z\right)  =1-zq^{\lambda_{m}-1}$ by
telescoping (this telescoping property is unique to the scalar case and the
norm formulas for the vector-valued case look quite different). Furthermore%
\[
\frac{h_{q,t}\left(  \lambda;tz\right)  }{h_{q,t}\left(  \gamma;tz\right)
}\frac{h_{q,t}\left(  \gamma;z\right)  }{h_{q,t}\left(  \lambda;z\right)
}=t^{1-m}\prod\limits_{i=1}^{m-1}u\left(  zq^{\lambda_{i}-\lambda_{m}}%
t^{m-i}\right)  \frac{1-ztq^{\lambda_{m}-1}}{1-zq^{\lambda_{m}-1}}.
\]
Set $z=q$ to obtain%
\[
\mathcal{E}\left(  \beta\right)  =t^{m-1}\left(  \frac{1-q^{\lambda_{m}}%
}{1-tq^{\lambda_{m}}}\right)  \frac{h_{q,t}\left(  \lambda;tq\right)
}{h_{q,t}\left(  \gamma;tq\right)  }\frac{h_{q,t}\left(  \gamma;q\right)
}{h_{q,t}\left(  \lambda;q\right)  }%
\]
and%
\begin{align*}
\frac{\left\Vert M_{\lambda}\right\Vert ^{2}}{\left\Vert M_{\gamma}\right\Vert
^{2}}  &  =t^{1-m}\left(  \frac{1-q^{\lambda_{m}}t^{N-m}}{1-q}\right)  \left(
\frac{1-q^{\lambda_{m}}t}{1-q^{\lambda_{m}}}\right) \\
&  \times\frac{h_{q,t}\left(  \lambda;q\right)  }{h_{q,t}\left(
\gamma;q\right)  }\frac{h_{q,t}\left(  \gamma;tq\right)  }{h_{q,t}\left(
\lambda;tq\right)  }t^{N-m}\frac{\left(  1-q^{\lambda_{m}}\right)  \left(
1-q^{\lambda_{m}}t^{N-m+1}\right)  }{\left(  1-q^{\lambda_{m}}t\right)
\left(  1-q^{\lambda_{m}}t^{N-m}\right)  }\\
&  =t^{N-2m+1}\left(  \frac{1-q^{\lambda_{m}}t^{N-m+1}}{1-q}\right)
\frac{h_{q,t}\left(  \lambda;q\right)  }{h_{q,t}\left(  \gamma;q\right)
}\frac{h_{q,t}\left(  \gamma;tq\right)  }{h_{q,t}\left(  \lambda;tq\right)  }.
\end{align*}

Define the generalized $q,t$ factorial for $\lambda\in\mathbb{N}_{0}^{N,+}$ by
$\left(  z;q,t\right)  _{\lambda}=\prod\limits_{i=1}^{N}\left(  zt^{1-i}%
;q\right)  _{\lambda_{i}}$ , where $\left(  z;q\right)  _{n}:=\prod
\limits_{i=1}^{N}\left(  1-zq^{i-1}\right)  $.

\begin{theorem}
Suppose (BF1) holds and $\lambda\in\mathbb{N}_{0}^{N,+}$ then%
\begin{align}
\left\Vert M_{\lambda}\right\Vert ^{2}  &  =t^{k\left(  \lambda\right)  }%
\frac{h_{q,t}\left(  \lambda;q\right)  }{h_{q,t}\left(  \lambda;qt\right)
}\frac{\left(  qt^{N-1};q\right)  _{\lambda}}{\left(  1-q\right)  ^{\left\vert
\lambda\right\vert }},\label{normdef1}\\
k\left(  \lambda\right)   &  :=\sum_{i=1}^{N}\left(  N-2i+1\right)
\lambda_{i}.\nonumber
\end{align}

\end{theorem}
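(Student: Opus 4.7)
The plan is to prove (\ref{normdef1}) by induction on $|\lambda|$. The base case $\lambda = \mathbf{0}$ is immediate: $M_{\mathbf{0}} = 1$ gives $\|M_{\mathbf{0}}\|^2 = 1$ by (\ref{BF1.1}), while every factor on the right-hand side of (\ref{normdef1}) is an empty product.

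For the inductive step I would fix a partition $\lambda$ with $|\lambda| \geq 1$, let $m$ be the largest index with $\lambda_m \geq 1$, and set $\gamma = (\lambda_1,\ldots,\lambda_{m-1},\lambda_m-1,0,\ldots,0) \in \mathbb{N}_0^{N,+}$. The inductive hypothesis supplies (\ref{normdef1}) for $\gamma$ since $|\gamma| = |\lambda| - 1$. Combining this with the recursion
\[
\frac{\|M_\lambda\|^2}{\|M_\gamma\|^2} = t^{N-2m+1}\,\frac{1-q^{\lambda_m}t^{N-m+1}}{1-q}\,\frac{h_{q,t}(\lambda;q)}{h_{q,t}(\gamma;q)}\,\frac{h_{q,t}(\gamma;qt)}{h_{q,t}(\lambda;qt)}
\]
derived immediately before the theorem, it suffices to verify that the ratio of the proposed closed form at $\lambda$ to the proposed closed form at $\gamma$ equals the right-hand side of this recursion.

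This check decomposes into four independent matchings. First, $k(\lambda) - k(\gamma) = N - 2m + 1$ because $\lambda$ and $\gamma$ differ only in the $m$-th coordinate and the coefficient of $\lambda_m$ in the definition of $k$ is $N - 2m + 1$, giving $t^{k(\lambda)-k(\gamma)} = t^{N-2m+1}$. Second, the $(1-q)^{|\lambda|}$ denominator contributes the single factor $1/(1-q)$ since $|\lambda| - |\gamma| = 1$. Third, the two hook-product ratios $h_{q,t}(\lambda;q)/h_{q,t}(\gamma;q)$ and $h_{q,t}(\gamma;qt)/h_{q,t}(\lambda;qt)$ literally coincide on both sides. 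Fourth, in the generalized $q,t$-factorial only the $i=m$ factor of $\prod_{i=1}^N (qt^{N-i};q)_{\lambda_i}$ changes when passing from $\gamma$ to $\lambda$, and the identity $(a;q)_n/(a;q)_{n-1} = 1 - aq^{n-1}$ produces a single linear factor to be identified with $1 - q^{\lambda_m}t^{N-m+1}$.

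The main obstacle is the fourth matching: one must read off the exponent of $t$ carefully from the definition $(z;q,t)_\lambda = \prod_{i=1}^N (zt^{1-i};q)_{\lambda_i}$ and confirm that, with the base $z = qt^{N-1}$ chosen in (\ref{normdef1}), the surviving Pochhammer ratio at index $m$ reproduces exactly the factor $1 - q^{\lambda_m}t^{N-m+1}$ appearing in the recursion. The remaining exponent bookkeeping and the hook cancellation are mechanical, so once this identification is in hand the induction closes.
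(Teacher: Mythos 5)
Your strategy is exactly the paper's: the printed proof consists precisely of the base case $\left\Vert 1\right\Vert ^{2}=1$ plus the factor-by-factor comparison of the closed form with the recursion
\[
\frac{\left\Vert M_{\lambda}\right\Vert ^{2}}{\left\Vert M_{\gamma}\right\Vert ^{2}}=t^{N-2m+1}\,\frac{1-q^{\lambda_{m}}t^{N-m+1}}{1-q}\,\frac{h_{q,t}\left(  \lambda;q\right)  }{h_{q,t}\left(  \gamma;q\right)  }\,\frac{h_{q,t}\left(  \gamma;tq\right)  }{h_{q,t}\left(  \lambda;tq\right)  }
\]
derived just above the theorem, and your first three matchings are correct and are the same ones the paper invokes.

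However, the fourth matching --- the one you rightly single out as the crux and then defer --- does not close as you state it. With the base $z=qt^{N-1}$ the $i=m$ factor of $\left(  z;q,t\right)  _{\lambda}=\prod_{i=1}^{N}\left(  zt^{1-i};q\right)  _{\lambda_{i}}$ is $\left(  qt^{N-m};q\right)  _{\lambda_{m}}$, so the surviving Pochhammer ratio is $1-qt^{N-m}q^{\lambda_{m}-1}=1-q^{\lambda_{m}}t^{N-m}$, one power of $t$ short of the $1-q^{\lambda_{m}}t^{N-m+1}$ required by the recursion. The culprit is a typo in the statement of the theorem: the base should be $qt^{N}$, not $qt^{N-1}$. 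A direct check with $N=2$, $\lambda=\left(  1,0\right)  $ confirms this: the recursion (and equally the vector-valued formula (\ref{normdef2}) specialized to the trivial representation $\tau=\left(  N\right)  $, where $c\left(  i,S\right)  =N-i$) gives $\left\Vert M_{\left(  1,0\right)  }\right\Vert ^{2}=t\left(  1-qt^{2}\right)  /\left(  1-qt\right)  $, whereas (\ref{normdef1}) as printed evaluates to $t$. The paper's own proof contains the same unreconciled line (it asserts the ratio equals $1-q^{\lambda_{m}}t^{N-m}$ and simply declares victory), so you have reproduced the argument faithfully --- but had you actually carried out the ``exponent bookkeeping'' you postpone, the induction would not have closed against the stated formula. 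With the base corrected to $qt^{N}$ your proof goes through verbatim.
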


\begin{proof}
The formula gives the trivial result $\left\Vert 1\right\Vert ^{2}=1$, where
$M_{\mathbf{0}}=1.$One need only check $\left(  qt^{N-1};q,t\right)
_{\lambda}/\left(  qt^{N-1};q,t\right)  _{\gamma}=\left(  qt^{N-m};q\right)
_{\lambda_{m}}/\left(  qt^{N-m};q\right)  _{\lambda_{m}-1}=1-q^{\lambda_{m}%
}t^{N-m}$ and $k\left(  \lambda\right)  -k\left(  \gamma\right)  =N-2m+1$.
\end{proof}

Note that $k\left(  \lambda\right)  =\sum_{i=1}^{\left\lfloor N/2\right\rfloor
}\left(  \lambda_{i}-\lambda_{N+1-i}\right)  \left(  N-2i+1\right)  \geq0$. We
now use formula (\ref{normdef1}), together with $\left\langle M_{\alpha
},M_{\beta}\right\rangle =0$ for $\alpha\neq\beta$ and $\left\Vert M_{\alpha
}\right\Vert ^{2}=\mathcal{E}\left(  \alpha\right)  ^{-1}\left\Vert
M_{\alpha^{+}}\right\Vert ^{2}$ as definition of the form. It is
straightforward to check properties (\ref{BF1.1}), (\ref{BF1.2}) and
(\ref{BF1.3}). For (\ref{BF1.4}) we need to show $\left\Vert M_{\alpha\Phi
}\right\Vert ^{2}=\frac{1-q\zeta_{\alpha}\left(  1\right)  }{1-q}\left\Vert
M_{\alpha}\right\Vert ^{2}$ (detailed argument in Section \ref{VVMD}) and the
formula $\left\langle f\mathcal{D}_{N},g\right\rangle =\left(  1-q\right)
\left\langle f,x_{N}\left(  gw^{\ast}w\right)  \right\rangle $. It suffices to
prove this for $f=M_{\gamma}$ and $gw^{\ast}=M_{\beta}$ with $\left\vert
\gamma\right\vert =\left\vert \beta\right\vert +1$; indeed $\left\langle
M_{\gamma}\mathcal{D}_{N},M_{\beta}\left(  w^{\ast}\right)  ^{-1}\right\rangle
=\left\langle M_{\gamma}\mathcal{D}_{N}w^{-1},M_{\beta}\right\rangle $ and
$\left\langle M_{\gamma},x_{N}M_{\beta}w\right\rangle =\left\langle M_{\gamma
},M_{\beta\Phi}\right\rangle $. If $\gamma=\alpha\Phi$ for some $\alpha$ then
both terms vanish for $\alpha\neq\beta$, otherwise the equation$\left\Vert
M_{\alpha\Phi}\right\Vert ^{2}=\frac{1-q\zeta_{\alpha}\left(  1\right)  }%
{1-q}\left\Vert M_{\alpha}\right\Vert ^{2}$ holds. If $\gamma_{N}=0$ then
$M_{\gamma}\mathcal{D}_{N}=0$ and $\left\langle M_{\gamma},M_{\beta\Phi
}\right\rangle =0$ (since $\gamma\neq\beta\Phi$).

The last of our concerns here is to determine the $\left(  q,t\right)  $
region of positivity of $\left\langle \cdot,\cdot\right\rangle $. Inspection
of the norm formula shows that there is an even number of factors of the form
$1-q^{a}t^{b}$ where $a\geq1$ and $0\leq b\leq N$. There are two
possibilities: either each such factor is positive or each is negative. Always
assume $q,t>0$ and $q\neq1$. If each is positive then $0<q<1$ and $q^{a}%
t^{b}\leq qt^{b}$. If $0<t\leq1$ then $qt^{b}\leq q<1$, or if $t>1$ then
$qt^{b}\leq qt^{N}<1$, that is $q<t^{-N}$. If each factor is negative then
$q>1$: if $t\geq1$ then $q^{a}t^{b}\geq q>1$, or if $0<t\leq1$ then
$q^{a}t^{b}\geq qt^{b}\geq qt^{N}>1$, that is, $q>t^{-N}$.

\begin{proposition}
The inner product $\left\langle \cdot,\cdot\right\rangle $ is
positive-definite, that is, $\left\langle M_{\alpha},M_{\alpha}\right\rangle
>0$ for all $\alpha\in\mathbb{N}_{0}^{N}$ provided $q,t>0$, and $q<\min\left(
1,t^{-N}\right)  $ or $q>\max\left(  1,t^{-N}\right)  $.
\end{proposition}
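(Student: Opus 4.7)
The plan is to combine Proposition \ref{e-norm1} with the explicit formula (\ref{normdef1}) and the sign analysis given just before the proposition. The identity $\Vert M_\alpha\Vert^2 = \mathcal{E}(\alpha)^{-1}\Vert M_{\alpha^+}\Vert^2$ splits the task into two parts: (a) positivity of $\Vert M_\lambda\Vert^2$ for partitions $\lambda$, and (b) positivity of $\mathcal{E}(\alpha)$ for arbitrary compositions. The paragraph preceding the proposition has already done the essential work: in each of the two $(q,t)$-regions, every factor of the form $1 - q^a t^b$ with $a \geq 1$ and $0 \leq b \leq N$ has a uniform sign across all admissible exponents. So the remaining job is to recognize that all factors appearing in the two expressions reduce, up to positive prefactors, to this controlled class.

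For part (a), I would expand each ingredient of (\ref{normdef1}). The hook product $h_{q,t}(\lambda;q)$ is a product of $|\lambda|$ factors $1 - q^{1+\mathrm{arm}}t^{\mathrm{leg}}$ with $\mathrm{leg}\in[0,N-1]$; $h_{q,t}(\lambda;qt)$ contributes $|\lambda|$ factors with leg shifted into $[1,N]$; $(qt^{N-1};q,t)_\lambda$ yields $|\lambda|$ factors $1 - q^k t^{N-i}$ with $k\ge 1$ and $N-i\in[0,N-1]$; and $(1-q)^{|\lambda|}$ is $|\lambda|$ copies of $1-q$. Every factor lies in the controlled class, numerator and denominator each hold $2|\lambda|$ of them, and $t^{k(\lambda)}>0$, so the ratio is positive in either region.

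For part (b), I would prove $u(\rho)>0$ for each factor in $\mathcal{E}(\alpha)$ through the identity
\[
u(q^a t^b) = \frac{t\,(1 - q^a t^{b-1})(1 - q^a t^{b+1})}{(1 - q^a t^b)^2}.
\]
Here $a=\alpha_j-\alpha_i\ge 1$ and $b=r_\alpha(i)-r_\alpha(j)$ for $i<j$ with $\alpha_i<\alpha_j$. The main structural point is that larger entries receive strictly smaller ranks, forcing $b\ge 1$; combined with the trivial bound $b\le N-1$ this places $b\pm 1$ in $[0,N]$, so the two bracketed numerator factors share sign, their product is positive, the denominator is a positive square, and $t>0$, giving $u(\rho)>0$ and thus $\mathcal{E}(\alpha)>0$. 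I expect the one point requiring care to be precisely this strict inequality $b\ge 1$: without it a factor such as $1-q^a t^{-1}$ could enter $u(\rho)$ and fall outside the exponent range covered by the preceding sign analysis, which would shrink the region of positivity below the stated one.
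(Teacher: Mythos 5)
Your proposal is correct and follows essentially the same route as the paper: the paper's argument is precisely the "inspection of the norm formula" showing that every factor has the form $1-q^{a}t^{b}$ with $a\geq1$ and $0\leq b\leq N$ and that these occur in even number, combined with the sign dichotomy in the two $(q,t)$-regions. Your write-up merely makes that inspection explicit — in particular the rewriting $u(q^{a}t^{b})=t(1-q^{a}t^{b-1})(1-q^{a}t^{b+1})/(1-q^{a}t^{b})^{2}$ and the check that $r_{\alpha}(i)-r_{\alpha}(j)\geq1$ keeps the exponents in range — which is exactly the detail the paper leaves to the reader.
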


Figure \ref{posit1} is an illustration of the positivity region with $N=4$
using logarithmic coordinates.%

\begin{figure}
[ptb]
\begin{center}
\includegraphics[
height=3.1514in,
width=3.1514in
]%
{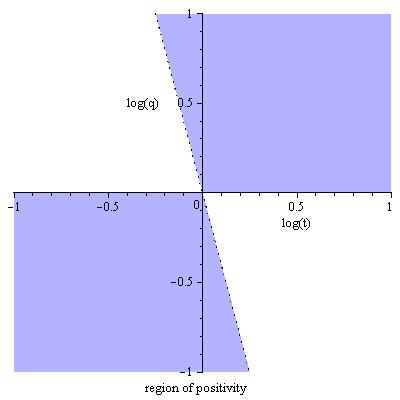}%
\caption{Logarithmic coordinates, N = 4}%
\label{posit1}%
\end{center}
\end{figure}

\section{\label{VVMD}Vector-valued Macdonald polynomials.}

These are polynomials whose values lie in an irreducible $\mathcal{H}%
_{N}\left(  t\right)  $-module. The generating relations for the Hecke algebra
$\mathcal{H}_{N}\left(  t\right)  $ are stated in (\ref{Hrel}). For the
purpose of constructing a positive symmetric bilinear form we restrict $t>0$.
Also throughout $q,t\neq0,1$.

\subsection{Representations of the Hecke algebra}

The irreducible modules of $\mathcal{H}_{N}\left(  t\right)  $ correspond to
partitions of $N$ and are constructed in terms of Young tableaux (see Dipper
and James \cite{DJ1986}).

Let $\tau$ be a partition of $N,$ that is, $\tau\in\mathbb{N}_{0}^{N,+}$ and
$\left\vert \tau\right\vert =N$. Thus $\tau=\left(  \tau_{1},\tau_{2}%
,\ldots\right)  $ and often the trailing zero entries are dropped when writing
$\tau$. The length of $\tau$ is $\ell\left(  \tau\right)  =\max\left\{
i:\tau_{i}>0\right\}  $. There is a Ferrers diagram of shape $\tau$ (given the
same label), with boxes at points $\left(  i,j\right)  $ with $1\leq i\leq
\ell\left(  \tau\right)  $ and $1\leq j\leq\tau_{i}$. A \textit{tableau} of
shape $\tau$ is a filling of the boxes with numbers, and a \textit{reverse
standard Young tableau} (RSYT) is a filling with the numbers $\left\{
1,2,\ldots,N\right\}  $ so that the entries decrease in each row and each
column. Denote the set of RSYT's of shape $\tau$ by $\mathcal{Y}\left(
\tau\right)  $ and let $V_{\tau}=\mathrm{span}_{\mathbb{K}}\left\{
S:S\in\mathcal{Y}\left(  \tau\right)  \right\}  $ with orthogonal basis
$\mathcal{Y}\left(  \tau\right)  $ (recall $\mathbb{K=Q}\left(  q,t\right)
$). Set $n_{\tau}:=\dim V_{\tau}=\#\mathcal{Y}\left(  \tau\right)  $ The
formula for the dimension is a hook-length product. For $1\leq i\leq N$ and
$S\in\mathcal{Y}\left(  \tau\right)  $ the entry $i$ is at coordinates
$\left(  \mathrm{\operatorname{row}}\left(  i,S\right)  ,\operatorname{col}%
\left(  i,S\right)  \right)  $ and the \textit{content} of the entry is
$c\left(  i,S\right)  :=\operatorname{col}\left(  i,S\right)
-\mathrm{\operatorname{row}}\left(  i,S\right)  $. Each $S\in\mathcal{Y}%
\left(  \tau\right)  $ is uniquely determined by its \textit{content vector}
$\left[  c\left(  i,S\right)  \right]  _{i=1}^{N}$. For example let
$\tau=\left(  4,3\right)  $ and $S=$ $%
\begin{array}
[c]{cccc}%
4 & 3 & 1 & \\
7 & 6 & 5 & 2
\end{array}
$ then the content vector is $\left[  1,3,0,-1,2,1,0\right]  $. There is a
representation of $\mathcal{H}_{N}\left(  t\right)  $ on $V_{\tau}$, also
denoted by $\tau$ (slight abuse of notation). The description will be given in
terms of the actions of $\left\{  T_{i}\right\}  $ on the basis elements.

\begin{definition}
\label{deftauTi}The representation $\tau$ of $\mathcal{H}_{N}\left(  t\right)
$ is defined by the action of the generators specified as follows: for $1\leq
i<N$ and $S\in\mathcal{Y}\left(  \tau\right)  $\newline1)
$\mathrm{\operatorname{row}}\left(  i,S\right)  =\mathrm{\operatorname{row}%
}\left(  i+1,S\right)  $ (implying $\operatorname{col}\left(  i,S\right)
=\operatorname{col}\left(  i+1,S\right)  +1$ and $c\left(  i,S\right)
-c\left(  i+1,S\right)  =1$) then%
\[
S\tau\left(  T_{i}\right)  =tS;
\]
\newline2) $\operatorname{col}\left(  i,S\right)  =\operatorname{col}\left(
i+1,S\right)  $ (implying $\mathrm{\operatorname{row}}\left(  i,S\right)
=\mathrm{\operatorname{row}}\left(  i+1,S\right)  +1$ and $c\left(
i,S\right)  -c\left(  i+1,S\right)  =-1$) then%
\[
S\tau\left(  T_{i}\right)  =-S;
\]
3) $\mathrm{\operatorname{row}}\left(  i,S\right)  <\mathrm{\operatorname{row}%
}\left(  i+1,S\right)  $ and $\operatorname{col}\left(  i,S\right)
>\operatorname{col}\left(  i+1,S\right)  $ then $c\left(  i,S\right)
-c\left(  i+1,S\right)  \geq2$, the tableau $S^{\left(  i\right)  }$ obtained
from $S$ by exchanging $i$ and $i+1$, is an element of $\mathcal{Y}\left(
\tau\right)  $ and%
\[
S\tau\left(  T_{i}\right)  =S^{\left(  i\right)  }+\dfrac{t-1}{1-t^{c\left(
i+1,S\right)  -c\left(  i,S\right)  }}S;
\]
4) $c\left(  i,S\right)  -c\left(  i+1,S\right)  \leq-2$, thus
$\mathrm{\operatorname{row}}\left(  i,S\right)  >\mathrm{\operatorname{row}%
}\left(  i+1,S\right)  $ and $\operatorname{col}\left(  i,S\right)
<\operatorname{col}\left(  i+1,S\right)  $ then with $b=c\left(  i,S\right)
-c\left(  i+1,S\right)  $,%
\[
S\tau\left(  T_{i}\right)  =\frac{t\left(  t^{b+1}-1\right)  \left(
t^{b-1}-1\right)  }{\left(  t^{b}-1\right)  ^{2}}S^{\left(  i\right)  }%
+\frac{t^{b}\left(  t-1\right)  }{t^{b}-1}S.
\]

\end{definition}

The formulas in (4) are consequences of those in (3) by interchanging $S$ and
$S^{\left(  i\right)  }$ and applying the relations $\left(  \tau\left(
T_{i}\right)  +I\right)  \left(  \tau\left(  T_{i}\right)  -tI\right)  =0$
(where $I$ denotes the identity operator on $V_{\tau}$). There is a partial
order on $\mathcal{Y}\left(  \tau\right)  $ related to the inversion number:%
\[
\mathrm{inv}\left(  S\right)  :=\#\left\{  \left(  i,j\right)  :1\leq i<j\leq
N,c\left(  i,S\right)  \geq c\left(  j,S\right)  +2\right\}  ,
\]
so $\mathrm{inv}\left(  S^{\left(  i\right)  }\right)  =\mathrm{inv}\left(
S\right)  -1$ in (3) above. The $\mathrm{inv}$-maximal element $S_{0}$ of
$\mathcal{Y}\left(  \tau\right)  $ has the numbers $N,N-1,\ldots,1$ entered
column-by-column and the $\mathrm{inv}$-minimal element $S_{1}$ of
$\mathcal{Y}\left(  \tau\right)  $ has the numbers $N,N-1,\ldots,1$ entered
row-by-row. The set $\mathcal{Y}\left(  \tau\right)  $ can be given the
structure of a Yang-Baxter graph, with root $S_{0}$, sink $S_{1}$ with arrows
labeled by $T_{i}$ joining $S$ to $S^{\left(  i\right)  }$ as in (3). Some
properties can be proved by induction on the inversion number. Recall
$u\left(  z\right)  =\frac{\left(  t-z\right)  \left(  1-tz\right)  }{\left(
1-z\right)  ^{2}}=u\left(  z^{-1}\right)  $.

\begin{definition}
The bilinear symmetric form $\left\langle \cdot,\cdot\right\rangle _{0}$ on
$V_{\tau}$ is defined to be the linear extension of
\[
\left\langle S,S^{\prime}\right\rangle _{0}=\delta_{S,S^{\prime}}%
\prod\limits_{i<j,~c\left(  j,S\right)  -c\left(  i,S\right)  \geq2}u\left(
t^{c\left(  i,S\right)  -c\left(  j,S\right)  }\right)  .
\]

\end{definition}

\begin{proposition}
\label{Sform}Suppose $f,g\in V_{\tau}$ then $\left\langle f\tau\left(
T_{i}\right)  ,g\right\rangle _{0}=\left\langle f,g\tau\left(  T_{i}\right)
\right\rangle _{0}$ for $1\leq i<N$. If $c\left(  i,S\right)  -c\left(
i+1,S\right)  \geq2$ for some $i,S$ then $\left\langle S^{\left(  i\right)
},S^{\left(  i\right)  }\right\rangle _{0}=u\left(  t^{c\left(  i,S\right)
-c\left(  i+1,S\right)  }\right)  \left\langle S,S\right\rangle _{0}$.
\end{proposition}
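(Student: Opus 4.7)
The plan is to prove the second assertion (the norm ratio) first, because the adjointness property then reduces, on each two-dimensional $\tau(T_{i})$-invariant subspace $\mathrm{span}\{S,S^{(i)}\}$, to a single algebraic identity balancing the off-diagonal entries of the matrix of $\tau(T_{i})$ against the diagonal entries of the form.

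For the norm identity, fix $i$ and $S$ with $b:=c(i,S)-c(i+1,S)\geq 2$. Since $S^{(i)}$ is obtained from $S$ by exchanging the entries $i$ and $i+1$, the contents of those two entries are swapped and all other contents are unchanged. I partition the pairs $(j,k)$ with $1\leq j<k\leq N$ indexing the defining product of $\langle T,T\rangle_{0}$ into three classes: (i) those with $\{j,k\}\cap\{i,i+1\}=\emptyset$, (ii) those with exactly one of $j,k$ in $\{i,i+1\}$, and (iii) the pair $(i,i+1)$ itself. Class (i) contributes identically to $\langle S,S\rangle_{0}$ and $\langle S^{(i)},S^{(i)}\rangle_{0}$. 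For class (ii), the pairs $(j,i)$ and $(j,i+1)$ with $j<i$ swap their selection conditions and $u$-contributions under $S\mapsto S^{(i)}$, and similarly for $(i,k),(i+1,k)$ with $k>i+1$, so the total contribution is unchanged. For the pair $(i,i+1)$, the condition $c(i+1,T)-c(i,T)\geq 2$ fails for $T=S$ (difference $-b$) and holds for $T=S^{(i)}$ (difference $b$), contributing $u(t^{-b})=u(t^{b})$ via $u(z^{-1})=u(z)$. This yields $\langle S^{(i)},S^{(i)}\rangle_{0}=u(t^{b})\langle S,S\rangle_{0}$.

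For adjointness, by bilinearity it suffices to verify $\langle S\tau(T_{i}),S'\rangle_{0}=\langle S,S'\tau(T_{i})\rangle_{0}$ on basis pairs $(S,S')$. The diagonal case $S=S'$ is immediate since the form is diagonal. When $S\neq S'$, both sides vanish unless $S'=S^{(i)}$. Assume case (3) of Definition \ref{deftauTi} applies to $(i,S)$, so that case (4) applies to $(i,S^{(i)})$. The left-hand side equals the $S^{(i)}$-coefficient of $S\tau(T_{i})$, namely $1$, times $\langle S^{(i)},S^{(i)}\rangle_{0}=u(t^{b})\langle S,S\rangle_{0}$. The right-hand side equals the $S$-coefficient of $S^{(i)}\tau(T_{i})$, namely $\frac{t(t^{-b+1}-1)(t^{-b-1}-1)}{(t^{-b}-1)^{2}}$, times $\langle S,S\rangle_{0}$. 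Multiplying numerator and denominator of this coefficient by $t^{2b}$ and simplifying yields $\frac{(t-t^{b})(1-t^{b+1})}{(1-t^{b})^{2}}=u(t^{b})$, so both sides agree. In cases (1) and (2) of Definition \ref{deftauTi}, $S$ is a $\tau(T_{i})$-eigenvector and no $S'\neq S$ in $\mathcal{Y}(\tau)$ is related to $S$ by the swap (the hypothetical $S^{(i)}$ fails to be an RSYT), so both sides vanish.

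The main obstacle is essentially bookkeeping: one must match the apparently asymmetric formulas of cases (3) and (4) of Definition \ref{deftauTi} with the combinatorial ratio between $\langle S^{(i)},S^{(i)}\rangle_{0}$ and $\langle S,S\rangle_{0}$. Once the algebraic identity $u(t^{b})=\frac{t(t^{-b+1}-1)(t^{-b-1}-1)}{(t^{-b}-1)^{2}}$ is verified, the self-adjointness on each $\tau(T_{i})$-invariant subspace is forced.
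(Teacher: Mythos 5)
Your proposal is correct and follows essentially the same route as the paper: the norm ratio is obtained by observing that only the $(i,i+1)$ factor of the defining product changes under $S\mapsto S^{(i)}$ (using $u(z^{-1})=u(z)$), and adjointness is checked on each two-dimensional invariant subspace by matching the off-diagonal coefficient $\tfrac{t(t^{-b+1}-1)(t^{-b-1}-1)}{(t^{-b}-1)^{2}}=u(t^{b})$ from case (4) against the ratio $\left\langle S^{(i)},S^{(i)}\right\rangle_{0}/\left\langle S,S\right\rangle_{0}$. Your treatment of the class-(ii) pair-swapping and of the degenerate cases (1) and (2) is slightly more explicit than the paper's, but the argument is the same.
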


\begin{proof}
If $\mathrm{\operatorname{row}}\left(  i,S\right)  =\mathrm{\operatorname{row}%
}\left(  i+1,S\right)  $ or $\operatorname{col}\left(  i,S\right)
=\operatorname{col}\left(  i+1,S\right)  $ then $\left\langle S\tau\left(
T_{i}\right)  ,S\right\rangle _{0}=t\left\langle S,S\right\rangle
_{0}=\left\langle S,S\tau\left(  T_{i}\right)  \right\rangle _{0}$ or
$\left\langle S\tau\left(  T_{i}\right)  ,S\right\rangle _{0}=-\left\langle
S,S\right\rangle _{0}=\left\langle S,S\tau\left(  T_{i}\right)  \right\rangle
_{0}$ respectively. If $c\left(  i,S\right)  -c\left(  i+1,S\right)  \geq2$
and $b=c\left(  i+1,S\right)  -c\left(  i,S\right)  $ then $\left\langle
S^{\left(  i\right)  },S^{\left(  i\right)  }\right\rangle _{0}/\left\langle
S,S\right\rangle _{0}=u\left(  t^{-b}\right)  $ (in the product the only
difference is the term $\left(  i,i+1\right)  $, appearing in $\left\langle
S^{\left(  i\right)  },S^{\left(  i\right)  }\right\rangle _{0}$). Then
\begin{align*}
\left\langle S\tau\left(  T_{i}\right)  ,S^{\left(  i\right)  }\right\rangle
_{0}  &  =\left\langle S^{\left(  i\right)  },S^{\left(  i\right)
}\right\rangle _{0}+\dfrac{t-1}{1-t^{b}}\left\langle S^{\left(  i\right)
},S\right\rangle _{0}=\left\langle S^{\left(  i\right)  },S^{\left(  i\right)
}\right\rangle _{0},\\
\left\langle S^{\left(  i\right)  }\tau\left(  T_{i}\right)  ,S\right\rangle
_{0}  &  =\frac{t\left(  t^{b+1}-1\right)  \left(  t^{b-1}-1\right)  }{\left(
t^{b}-1\right)  ^{2}}\left\langle S,S\right\rangle _{0}+\frac{t^{b}\left(
t-1\right)  }{t^{b}-1}\left\langle S^{\left(  i\right)  },S\right\rangle
_{0}\\
&  =\frac{t\left(  t^{b+1}-1\right)  \left(  t^{b-1}-1\right)  }{\left(
t^{b}-1\right)  ^{2}}\left\langle S,S\right\rangle _{0}=u\left(  t^{b}\right)
\left\langle S,S\right\rangle _{0},
\end{align*}
thus $\left\langle S\tau\left(  T_{i}\right)  ,S^{\left(  i\right)
}\right\rangle _{0}=\left\langle S^{\left(  i\right)  }\tau\left(
T_{i}\right)  ,S\right\rangle _{0}$. These statements imply that $\left\langle
f\tau\left(  T_{i}\right)  ,g\right\rangle _{0}=\left\langle f,g\tau\left(
T_{i}\right)  \right\rangle _{0}$ for $f,g\in V_{\tau}$.
\end{proof}

Furthermore if $t>0$ then $\left\langle S,S\right\rangle _{0}\geq0$; each term
is of the form $\dfrac{\left(  t-t^{m}\right)  \left(  1-t^{m+1}\right)
}{\left(  1-t^{m}\right)  ^{2}}$ with $m\geq2$; either all parts are positive
or all are negative depending on $0<t<1$ or $t>1$ respectively (the limit as
$t\rightarrow1$ is $\frac{m^{2}-1}{m^{2}}>0$). Denote $\left\langle
f,f\right\rangle _{0}=\left\Vert f\right\Vert _{0}^{2}$ for $f\in V_{\tau}$.

There is a commutative set of Jucys-Murphy elements in $\mathcal{H}_{N}\left(
t\right)  $ which are diagonalized with respect to the basis $\mathcal{Y}%
\left(  \tau\right)  $.

\begin{definition}
Set $\phi_{N}:=1$ and $\phi_{i}:=\frac{1}{t}T_{i}\phi_{i+1}T_{i}$ for $1\leq
i<N$.
\end{definition}

\begin{proposition}
\label{eigenphi}Suppose $1\leq i\leq N$ and $S\in\mathcal{Y}\left(
\tau\right)  $ then $S\tau\left(  \phi_{i}\right)  =t^{c\left(  i,S\right)
}S$.
\end{proposition}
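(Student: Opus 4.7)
The plan is to prove $S\tau(\phi_i)=t^{c(i,S)}S$ by downward induction on $i$, using the recursion $\phi_i=\frac{1}{t}T_i\phi_{i+1}T_i$ and splitting into the four cases of Definition \ref{deftauTi} at each step.

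The base case $i=N$ is immediate: $\phi_N=1$, and since $N$ is the largest entry of a reverse standard Young tableau it must sit at position $(1,1)$, so $c(N,S)=0=\log_t 1$ for every $S\in\mathcal{Y}(\tau)$. For the inductive step, assume $S'\tau(\phi_{i+1})=t^{c(i+1,S')}S'$ for all $S'$. Fix $S$ and set $\alpha:=c(i,S)$, $\beta:=c(i+1,S)$. I would then compute $S\tau(T_i)\tau(\phi_{i+1})\tau(T_i)$ case by case.

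In case (1) ($\alpha=\beta+1$) we have $S\tau(T_i)=tS$, and $\phi_{i+1}$ acts diagonally with eigenvalue $t^\beta$, so $S\tau(T_i\phi_{i+1}T_i)=t^2\cdot t^\beta S=t^{\alpha+1}S$; dividing by $t$ gives $t^{c(i,S)}S$. Case (2) ($\alpha=\beta-1$) is the same computation with $t$ replaced by $-1$, which still squares correctly. In case (3) ($\alpha-\beta\ge2$) the tableau $S^{(i)}$ has swapped contents, so by induction $S^{(i)}\tau(\phi_{i+1})=t^{\alpha}S^{(i)}$, and moreover $S^{(i)}$ is in case (4) with $b=\beta-\alpha=:d\le-2$. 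Expanding
\[
S\tau(T_i)=S^{(i)}+\frac{t-1}{1-t^d}S,\qquad S^{(i)}\tau(T_i)=\frac{t(t^{d+1}-1)(t^{d-1}-1)}{(t^d-1)^2}S+\frac{t^d(t-1)}{t^d-1}S^{(i)},
\]
and combining, the coefficient of $S^{(i)}$ in $S\tau(T_i\phi_{i+1}T_i)$ comes out to $\frac{t-1}{t^d-1}(t^{\alpha+d}-t^\beta)$, which vanishes because $\alpha+d=\beta$. The coefficient of $S$ then reduces, after writing $(t^d-1)^2=t^{-2\alpha}(t^\beta-t^\alpha)^2$, to $t^{\alpha+1}$, so once again $S\tau(\phi_i)=t^{\alpha}S=t^{c(i,S)}S$. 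Case (4) follows by the same calculation applied to $S^{(i)}$ (or equivalently by the $(\tau(T_i)+I)(\tau(T_i)-tI)=0$ symmetry linking cases (3) and (4)).

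The main obstacle is the algebraic cancellation in case (3): one must verify both that the off-diagonal $S^{(i)}$-coefficient vanishes (requiring the identity $\alpha+d=\beta$) and that the diagonal $S$-coefficient collapses to a single power of $t$. The key simplification is recognizing the numerator
\[
t^{\alpha+1}(t^{d+1}-1)(t^{d-1}-1)+(t-1)^2t^\beta=t^{1-\alpha}(t^\beta-t^\alpha)^2,
\]
which cancels against $(t^d-1)^2$ in the denominator. Everything else is routine bookkeeping once the downward induction is set up and the four cases of the $T_i$-action are invoked.
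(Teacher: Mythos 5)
Your proof is correct and follows essentially the same route as the paper: downward induction via $\phi_i=\frac1t T_i\phi_{i+1}T_i$ with a case split according to Definition \ref{deftauTi}, the only cosmetic difference being that the paper packages cases (3)--(4) as a single $2\times2$ matrix identity $\frac1t\mathcal{T}\Phi\mathcal{T}$ while you verify the two components explicitly. The cancellation you identify, $t^{\alpha+1}(t^{d+1}-1)(t^{d-1}-1)+(t-1)^2t^{\beta}=t^{1-\alpha}(t^{\beta}-t^{\alpha})^2$, checks out.
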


\begin{proof}
Arguing inductively suppose that $S\tau\left(  \phi_{i+1}\right)  =t^{c\left(
i+1,S\right)  }S$ for all $S\in\mathcal{Y}\left(  \tau\right)  $; this is
trivially true for $i=N-1$ since $c\left(  N,S\right)  =0$ and $\phi_{N}=1$.
If $\mathrm{\operatorname{row}}\left(  i,S\right)  =\mathrm{\operatorname{row}%
}\left(  i+1,S\right)  $ then $S\tau\left(  \phi_{i}\right)  =\frac{1}{t}%
S\tau\left(  T_{i}\right)  \tau\left(  \phi_{i+1}\right)  \tau\left(
T_{i}\right)  =t^{c\left(  i+1,S\right)  +1}S$ and $c\left(  i,S\right)
=c\left(  i+1,S\right)  +1$. If $\operatorname{col}\left(  i,S\right)
=\operatorname{col}\left(  i+1,S\right)  $ then $S\tau\left(  \phi_{i}\right)
=\frac{1}{t}S\tau\left(  T_{i}\right)  \tau\left(  \phi_{i+1}\right)
\tau\left(  T_{i}\right)  =\frac{1}{t}t^{c\left(  i+1,S\right)  }S$ and
$c\left(  i,S\right)  =c\left(  i+1,S\right)  -1$ (since $S\tau\left(
T_{i}\right)  =-S$). Suppose $c\left(  i,S\right)  -c\left(  i+1,S\right)
\geq2$ then the matrices $\mathcal{T},\Phi$ of $\tau\left(  T_{i}\right)
,\tau\left(  \phi_{i+1}\right)  $ respectively with respect to the basis
$\left[  S,S^{\left(  i\right)  }\right]  $ are%
\[
\mathcal{T}=%
\begin{bmatrix}
-\frac{1-t}{1-\rho} & 1\\
\frac{\left(  1-\rho t\right)  \left(  t-\rho\right)  }{\left(  1-\rho\right)
^{2}} & \frac{\rho\left(  1-t\right)  }{1-\rho}%
\end{bmatrix}
,~\Phi=%
\begin{bmatrix}
t^{c\left(  i+1,S\right)  } & 0\\
0 & t^{c\left(  i,S\right)  }%
\end{bmatrix}
,
\]
where $\rho=t^{c\left(  i+1,S\right)  -c\left(  i,S\right)  }$. A simple
calculation shows $\frac{1}{t}\mathcal{T}\Phi\mathcal{T=}%
\begin{bmatrix}
t^{c\left(  i,S\right)  } & 0\\
0 & t^{c\left(  i+1,S\right)  }%
\end{bmatrix}
$.
\end{proof}

\subsection{Polynomials and operators}

Let $\mathcal{P}_{\tau}:=\mathcal{P}\otimes V_{\tau}$. The action of
$\mathcal{H}_{N}\left(  t\right)  $ and the operators are defined as follows:
with $p\in\mathcal{P},S\in\mathcal{Y}\left(  \tau\right)  $ and $1\leq i<N$:%
\begin{align*}
\left(  p\left(  x\right)  \otimes S\right)  \boldsymbol{T}_{i}  &  :=\left(
1-t\right)  x_{i+1}\frac{p\left(  x\right)  -p\left(  x.s_{i}\right)  }%
{x_{i}-x_{i+1}}\otimes S+p\left(  x.s_{i}\right)  \otimes S\tau\left(
T_{i}\right)  ,\\
\omega &  :=T_{1}T_{2}\cdots T_{N-1},\\
\left(  p\left(  x\right)  \otimes S\right)  \boldsymbol{w}  &  :=p\left(
qx_{N},x_{1},\ldots,x_{N-1}\right)  \otimes S\tau\left(  \omega\right) \\
\boldsymbol{\xi}_{i}  &  :=t^{i-N}\boldsymbol{T}_{i-1}^{-1}\cdots
\boldsymbol{T}_{1}^{-1}\boldsymbol{wT}_{N-1}\cdots\boldsymbol{T}_{i}\\
\mathcal{D}_{N}  &  :=\left(  1-\boldsymbol{\xi}_{N}\right)  /x_{N}%
,~\mathcal{D}_{i}:=\frac{1}{t}\boldsymbol{T}_{i}\mathcal{D}_{i+1}%
\boldsymbol{T}_{i}.
\end{align*}
By the braid relations
\begin{align*}
T_{i+1}\omega &  =T_{1}\cdots T_{i+1}T_{i}T_{i+1}T_{i+2}\cdots T_{N-1}\\
&  =T_{1}\cdots T_{i}T_{i+1}T_{i}T_{i+2}\cdots T_{N-1}=\omega T_{i}.
\end{align*}
for $1\leq i<N-1$. It follows that $\boldsymbol{T}_{i+1}\boldsymbol{w}%
=\boldsymbol{w}\boldsymbol{T}_{i}$ acting on $\mathcal{P}_{\tau}$. The
operators $\left\{  \boldsymbol{\xi}_{i}\right\}  $ mutually commute and the
simultaneous polynomial eigenfunctions are the vector-valued (nonsymmetric)
Macdonald polynomials. The factor $t^{i-N}$ in $\boldsymbol{\xi}_{i}$ appears
to differ from the scalar case, but if $\tau=\left(  N\right)  $, the trivial
representation, then $S\tau\left(  T_{i}\right)  =tS$ (the unique RSYT of
shape $\left(  N\right)  $) and $S\tau\left(  \omega\right)  =t^{N-1}S$, and
thus $\boldsymbol{\xi}_{i}$ coincides with (\ref{defxi0}). The operator
$\boldsymbol{\xi}_{i}$ acting on constants coincides with $I\otimes\tau\left(
\phi_{i}\right)  $:%
\begin{align*}
\left(  1\otimes S\right)  \boldsymbol{\xi}_{i}  &  =t^{i-N}\otimes
S\tau\left(  T_{i-1}^{-1}\cdots T_{1}^{-1}T_{1}T_{2}\cdots T_{N-1}%
T_{N-1}\cdots T_{i}\right) \\
&  =t^{i-N}\otimes S\tau\left(  T_{i}\cdots T_{N-1}T_{N-1}\cdots T_{i}\right)
=1\otimes S\tau\left(  \phi_{i}\right)  =t^{c\left(  i,S\right)  }\left(
1\otimes S\right)  .
\end{align*}
For each $\alpha\in\mathbb{N}_{0}^{N}$ and $S\in\mathcal{Y}\left(
\tau\right)  $ there is an $\left\{  \boldsymbol{\xi}_{i}\right\}  $
eigenfunction%
\[
M_{\alpha,S}\left(  x\right)  =\eta\left(  \alpha,S\right)  x^{\alpha}\otimes
S\tau\left(  R_{\alpha}\right)  +\sum_{\alpha\vartriangleright\beta}x^{\beta
}\otimes B_{\alpha,\beta}\left(  q,t\right)
\]
where $\eta\left(  \alpha,S\right)  =q^{a}t^{b}$ with $a,b\in\mathbb{N}_{0}$
and $R_{\alpha},B_{\alpha,\beta}\left(  q,t\right)  \in\mathcal{H}_{N}\left(
t\right)  .$ Furthermore $R_{\alpha}$ is an analog of $r_{\alpha}$ (see
\cite[p.9]{DL2012}); if $\alpha\in\mathbb{N}_{0}^{N,+}$ then $R_{\alpha}=I$,
and if $\alpha_{i}<\alpha_{i+1}$ then $R_{\alpha.s_{i}}=R_{\alpha}T_{i}$
(there is a definition of $R_{\alpha}$ below). Furthermore%
\begin{align*}
M_{\alpha,S}\boldsymbol{\xi}_{i}  &  =\zeta_{\alpha,S}\left(  i\right)
M_{\alpha,S},1\leq i\leq N,\\
\zeta_{\alpha,S}\left(  i\right)   &  =q^{\alpha_{i}}t^{c\left(  r_{\alpha
}\left(  i\right)  ,S\right)  }.
\end{align*}
These polynomials are produced with the Yang-Baxter graph. The typical node
(labeled by $\left(  \alpha,S\right)  $) is%
\[
\left(  \alpha,S,\zeta_{\alpha,S},R_{\alpha},M_{\alpha,S}\right)
\]
and the root is $\left(  \boldsymbol{0},S_{0},\left[  t^{c\left(
i,S_{0}\right)  }\right]  _{i=1}^{N},I,1\otimes S_{0}\right)  $.

There are steps:

\begin{itemize}
\item if $\alpha_{i}<\alpha_{i+1}$ there is a step labeled $s_{i}$%
\begin{align}
\left(  \alpha,S,\zeta_{\alpha,S},R_{\alpha},M_{\alpha,S}\right)   &
\rightarrow\left(  \alpha.s_{i},S,\zeta_{\alpha.s_{i},S},R_{\alpha.s_{i}%
},M_{\alpha.s_{i},S}\right)  ,\nonumber\\
M_{\alpha.s_{i},S}  &  =M_{\alpha,S}\boldsymbol{T}_{i}+\frac{t-1}%
{\zeta_{\alpha,S}\left(  i+1\right)  /\zeta_{\alpha,S}\left(  i\right)
-1}M_{\alpha,S}.\label{MT>i}\\
R_{\alpha.s_{i}}  &  =R_{\alpha}T_{i},~\eta\left(  \alpha.s_{i},S\right)
=\eta\left(  \alpha,S\right)  ;\nonumber
\end{align}
(note that $\left(  x_{i}^{\alpha_{i}}x_{i+1}^{\alpha_{i+1}}\otimes
SR_{\alpha}\right)  \boldsymbol{T}_{i}=x_{i}^{\alpha_{i+1}}x_{i+1}^{\alpha
_{i}}\otimes S\tau\left(  R_{\alpha}T_{i}\right)  +\cdots$.)

\item if $\alpha_{i}=\alpha_{i+1}$, $j=r_{\alpha}\left(  i\right)  $ (thus
$j+1=r_{\alpha}\left(  i+1\right)  $, and $R_{\alpha}T_{i}=T_{j}R_{\alpha}$,
(see \cite[Lemma 2.14]{DL2012}) and $c\left(  j,S\right)  -c\left(
j+1,S\right)  \geq2$ there is a step%
\begin{align}
\left(  \alpha,S,\zeta_{\alpha,S},R_{\alpha},M_{\alpha,S}\right)   &
\rightarrow\left(  \alpha,S^{\left(  j\right)  },\left(  \zeta_{\alpha
,S}\right)  .s_{i},R_{\alpha},M_{\alpha,S^{\left(  j\right)  }}\right)
,\nonumber\\
M_{\alpha,S^{\left(  j\right)  }}  &  =M_{\alpha,S}\boldsymbol{T}_{i}%
+\frac{t-1}{\zeta_{\alpha,S}\left(  i+1\right)  /\zeta_{\alpha,S}\left(
i\right)  -1}M_{\alpha,S},\label{MT=i}\\
\frac{\zeta_{\alpha,S}\left(  i+1\right)  }{\zeta_{\alpha,S}\left(  i\right)
}  &  =t^{c\left(  j+1,S\right)  -c\left(  j,S\right)  },\eta\left(
\alpha,S^{\left(  j\right)  }\right)  =\eta\left(  \alpha,S\right)  ;\nonumber
\end{align}

\end{itemize}

For these formulas to be valid it is required that the denominators
$\zeta_{\alpha,S}\left(  i+1\right)  /\zeta_{\alpha,S}\left(  i\right)  -1$ do
not vanish, that is, $q^{\lambda_{i+1}-\lambda_{i}}t^{c\left(  r_{\alpha
}\left(  i+1\right)  ,S\right)  -c\left(  r_{\alpha}\left(  i\right)
,S\right)  }\neq1$. From the bound $\left\vert c\left(  j,S\right)  -c\left(
j^{\prime},S\right)  \right\vert \leq\tau_{1}+\ell\left(  \tau\right)  -2$ we
obtain the necessary condition $q^{q}t^{b}\neq1$ for $a\geq0$ and $\left\vert
b\right\vert \leq\tau_{1}+\ell\left(  \tau\right)  -2$ These conditions are
satisfied in the region of positivity described in Proposition \ref{regpos2}.

The other possibilities for the action of $\boldsymbol{T}_{i}$ are:

\begin{itemize}
\item if $\alpha_{i}>\alpha_{i+1}$ set $\rho:=\zeta_{\alpha,S}\left(
i\right)  /\zeta_{\alpha,S}\left(  i+1\right)  $ then%
\begin{equation}
M_{\alpha,S}\boldsymbol{T}_{i}=\frac{\left(  1-t\rho\right)  \left(
t-\rho\right)  }{\left(  1-\rho\right)  ^{2}}M_{\alpha.s_{i},S}+\frac
{\rho\left(  1-t\right)  }{\left(  1-\rho\right)  }M_{\alpha,S}; \label{MT<i}%
\end{equation}

\item if $\alpha_{i}=\alpha_{i+1}$ and $j=r_{\alpha}\left(  i\right)
,~c\left(  j,S\right)  -c\left(  j+1,S\right)  \leq2,~\rho=t^{c\left(
j,S\right)  -c\left(  j+1,S\right)  }$ then%
\begin{equation}
M_{\alpha,S}\boldsymbol{T}_{i}=\frac{\left(  1-t\rho\right)  \left(
t-\rho\right)  }{\left(  1-\rho\right)  ^{2}}M_{\alpha,S^{\left(  j\right)  }%
}+\frac{\rho\left(  1-t\right)  }{\left(  1-\rho\right)  }M_{\alpha,S};
\label{MT=2}%
\end{equation}

\item if $\alpha_{i}=\alpha_{i+1}$ and $j=r_{\alpha}\left(  i\right)
,\mathrm{\operatorname{row}}\left(  j,S\right)  =\mathrm{\operatorname{row}%
}\left(  j+1,S\right)  $ then $M_{\alpha,S}\boldsymbol{T}_{i}=tM_{\alpha,S}$;

\item if $\alpha_{i}=\alpha_{i+1}$ and $j=r_{\alpha}\left(  i\right)
,\operatorname{col}\left(  j,S\right)  =\operatorname{col}\left(
j+1,S\right)  $ then $M_{\alpha,S}\boldsymbol{T}_{i}=-M_{\alpha,S}$ .
\end{itemize}

The degree-raising operation, namely, the affine step, takes $\alpha$ to
$\alpha\Phi:=\left(  \alpha_{2},\alpha_{3},\ldots,\alpha_{N},\alpha
_{1}+1\right)  $:
\begin{align*}
&  \left(  \alpha,S,\zeta_{\alpha,S},R_{\alpha},M_{\alpha,S}\right)
\overset{\Phi}{\rightarrow}\left(  \alpha\Phi,S,\zeta_{\alpha\Phi,S}%
,R_{\alpha\Phi},M_{\alpha\Phi,S}\right)  ,\\
M_{\alpha\Phi,S}\left(  x\right)   &  =x_{N}M_{\alpha\Phi,S}\boldsymbol{w},\\
\alpha\Phi &  =\left(  \alpha_{2},\alpha_{3},\ldots,\alpha_{N},\alpha
_{1}+1\right)  ,\\
\zeta_{\alpha\Phi,S}  &  =\left(  \zeta_{\alpha,S}\left(  2\right)
,\ldots,\zeta_{\alpha,S}\left(  N\right)  ,q\zeta_{\alpha,S}\left(  1\right)
\right)  .
\end{align*}

The inversion number $\mathrm{inv}\left(  \alpha\right)  $ of $\alpha
\in\mathbb{N}_{0}^{N}$ is the length of the shortest product $g=s_{i_{1}%
}s_{i_{2}}\cdots s_{i_{m}}$ such that $\alpha.g=\alpha^{+}$. From this and the
Yang-Baxter graph we deduce that the series of steps $s_{i_{1}},s_{i_{2}%
},\cdots,s_{i_{m}}$ lead from $M_{\alpha,S}$ to $M_{\alpha^{+},S}$ and
$R_{\alpha}T_{i_{i}}T_{i_{2}}\cdots T_{i_{m}}=R_{\alpha^{+}}=I$.

\begin{definition}
Suppose $\alpha\in\mathbb{N}_{0}^{N}$ then $R_{\alpha}:=\left(  T_{i_{i}%
}T_{i_{2}}\cdots T_{i_{m}}\right)  ^{-1}$ where $\alpha.s_{i_{1}}s_{i_{2}%
}\cdots s_{i_{m}}=\alpha^{+}$ and $m=\mathrm{inv}\left(  \alpha\right)  $.
\end{definition}

There may be different products $\alpha.s_{j_{1}}s_{j_{2}}\cdots s_{ij}%
=\alpha^{+}$ of length $\mathrm{inv}\left(  \alpha\right)  $ but they all give
the same value of $R_{\alpha}$ by the braid relations. It is shown in
\cite[p.10, (2.15)]{DL2012} that $R_{\alpha}\omega=t^{N-m}\phi_{m}%
R_{\alpha\Phi}$ with $m=r_{\alpha}\left(  1\right)  $.

\subsection{The bilinear symmetric form}

We will define a symmetric bilinear form $\left\langle \cdot,\cdot
\right\rangle $ on $\mathcal{P}_{\tau}$ satisfying certain postulates; using
the same logical outline as in Section \ref{SMD}; first we derive consequences
from these, then state the definition and show the desired properties apply.

The \textbf{hypotheses} (BF2) for the symmetric bilinear form $\left\langle
\cdot,\cdot\right\rangle $ on $\mathcal{P}_{\tau}$, with $\boldsymbol{w}%
^{\ast}:=\boldsymbol{T}_{N-1}^{-1}\cdots\boldsymbol{T}_{1}^{-1}\boldsymbol{w}%
\boldsymbol{T}_{N-1}\cdots\boldsymbol{T}_{1}$, are (for $f,g\in\mathcal{P}%
_{\tau},~S,S^{\prime}\in\mathcal{Y}\left(  \tau\right)  ,~1\leq i<N$)%
\begin{subequations}
\begin{align}
\left\langle 1\otimes S,1\otimes S^{\prime}\right\rangle  &  =\left\langle
S,S^{\prime}\right\rangle _{0},\label{BF2.1}\\
\left\langle f\boldsymbol{T}_{i},g\right\rangle  &  =\left\langle
f,g\boldsymbol{T}_{i}\right\rangle ,\label{BF2.2}\\
\left\langle f\boldsymbol{\xi}_{N},g\right\rangle  &  =\left\langle
f,g\boldsymbol{\xi}_{N}\right\rangle .\label{BF2.3}\\
\left\langle f\mathcal{D}_{N},g\right\rangle  &  =\left(  1-q\right)
\left\langle f,x_{N}\left(  g\boldsymbol{w}^{\ast}\boldsymbol{w}\right)
\right\rangle \label{BF2.4}%
\end{align}
Properties (\ref{BF2.2}) and (\ref{BF2.3}) imply $\left\langle
f\boldsymbol{\xi}_{i},g\right\rangle =\left\langle f,g\boldsymbol{\xi}%
_{i}\right\rangle $ for each $i$ and thus the $M_{\alpha,S}$'s are mutually
orthogonal. As in the scalar case $\left\langle f\boldsymbol{w},g\right\rangle
=\left\langle f,g\boldsymbol{w}^{\ast}\right\rangle $. If $f,g$ are
homogeneous of different degrees then $\left\langle f,g\right\rangle =0$. As
before denote $\left\langle f,f\right\rangle =\left\Vert f\right\Vert ^{2}$.
First we will show that these hypotheses determine the from uniquely when
$q,t\neq0,1$ without recourse to the Macdonald polynomials. We use the
commutation relationships $\left(  x_{i+1}f\right)  \boldsymbol{T}_{i}%
=x_{i}\left(  f\boldsymbol{T}_{i}\right)  +\left(  t-1\right)  x_{i+1}f$ and
$\left(  x_{j}f\right)  \boldsymbol{T}_{i}=x_{j}\left(  f\boldsymbol{T}%
_{i}\right)  $ for $f\in\mathcal{P}_{\tau}$ and $j\neq i,i+1$ (a simple direct computation).
\end{subequations}
\begin{proposition}
\label{fDxg}Suppose (BF2) holds then for $1\leq i\leq j\leq N$ and
$q,t\neq0,1$ there are operators $A_{i,j},B_{i,j}$,on $\mathcal{P}_{\tau}$
preserving degree of homogeneity such that $A_{i,i}$ and $B_{i,i}$ are
invertible and for $f,g\in\mathcal{P}_{\tau}$%
\begin{align*}
\left\langle f\mathcal{D}_{i},g\right\rangle  &  =\sum_{j=i}^{N}\left\langle
f,x_{j}\left(  gA_{i,j}\right)  \right\rangle \\
\left\langle f,x_{i}g\right\rangle  &  =\sum_{j=i}^{N}\left\langle
f\mathcal{D}_{j},gB_{i,j}\right\rangle .
\end{align*}

\end{proposition}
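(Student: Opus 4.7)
The plan is descending induction on $i$ from $N$ down to $1$ for the first identity, followed by inversion of an upper-triangular operator system for the second. The base case $i=N$ is hypothesis (\ref{BF2.4}) verbatim, giving $A_{N,N} := (1-q)\boldsymbol{w}^{\ast}\boldsymbol{w}$ (and no other $A_{N,j}$). Both $\boldsymbol{w}$ and $\boldsymbol{w}^{\ast}$ are degree-preserving and invertible on $\mathcal{P}_\tau$, and $q\ne 1$, so $A_{N,N}$ inherits these properties.

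For the inductive step at $i<N$, the defining recursion $\mathcal{D}_i = \tfrac{1}{t}\boldsymbol{T}_i\mathcal{D}_{i+1}\boldsymbol{T}_i$ combined with (\ref{BF2.2}) (applied twice) gives
\[
\langle f\mathcal{D}_i, g\rangle = \tfrac{1}{t}\langle f\boldsymbol{T}_i\mathcal{D}_{i+1}, g\boldsymbol{T}_i\rangle = \tfrac{1}{t}\sum_{j=i+1}^{N}\langle f, \bigl(x_j((g\boldsymbol{T}_i)A_{i+1,j})\bigr)\boldsymbol{T}_i\rangle,
\]
after applying the inductive hypothesis to the pair $(f\boldsymbol{T}_i, g\boldsymbol{T}_i)$ and then moving the outer $\boldsymbol{T}_i$ back across via (\ref{BF2.2}). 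Now push $\boldsymbol{T}_i$ past each $x_j$ using the commutation formulas recorded just before the proposition: for $j>i+1$ it commutes cleanly, while for $j=i+1$ one picks up an $x_i$ piece plus a $(t-1)x_{i+1}$ remainder. Collecting by surviving $x_k$ defines
\begin{align*}
A_{i,i} &= \tfrac{1}{t}\boldsymbol{T}_i A_{i+1,i+1}\boldsymbol{T}_i, \\
A_{i,i+1} &= \tfrac{t-1}{t}\boldsymbol{T}_i A_{i+1,i+1}, \\
A_{i,j} &= \tfrac{1}{t}\boldsymbol{T}_i A_{i+1,j}\boldsymbol{T}_i \quad (j>i+1).
\end{align*}
All three preserve degree, and $A_{i,i}$ is invertible because $\boldsymbol{T}_i$ (quadratic Hecke relation) and $A_{i+1,i+1}$ (induction) are, with $t \ne 0$.

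For the second identity, define $B_{i,j}$ recursively by $B_{i,i} := A_{i,i}^{-1}$ and $B_{i,k} := -\bigl(\sum_{j=i}^{k-1}B_{i,j}A_{j,k}\bigr)A_{k,k}^{-1}$ for $k>i$, which is equivalent to imposing $\sum_{j=i}^k B_{i,j}A_{j,k} = \delta_{i,k}I$ for all $i\le k\le N$. Applying the already-proven first identity to each summand and swapping the order of summation yields
\[
\sum_{j=i}^N \langle f\mathcal{D}_j, gB_{i,j}\rangle = \sum_{k=i}^N \bigl\langle f, x_k\bigl(g\textstyle\sum_{j=i}^k B_{i,j}A_{j,k}\bigr)\bigr\rangle = \langle f, x_i g\rangle,
\]
as required. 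The main technical point is the commutation at $j=i+1$: the $x_i$ piece of $(x_{i+1}h)\boldsymbol{T}_i = x_i(h\boldsymbol{T}_i)+(t-1)x_{i+1}h$ is precisely what converts the old leading operator $A_{i+1,i+1}$ into the new, still-invertible diagonal $A_{i,i}$, letting the descending induction preserve invertibility of the diagonal at every step; without that $x_i$ term no $A_{i,i}$ would be generated and the proposition would fail. Everything else is formal upper-triangular algebra.
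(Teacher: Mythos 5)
Your proposal is correct and follows essentially the same route as the paper: the same descending induction on $i$ starting from (\ref{BF2.4}), the same commutation of $\boldsymbol{T}_i$ past $x_j$ to produce $A_{i,i}=\frac{1}{t}\boldsymbol{T}_iA_{i+1,i+1}\boldsymbol{T}_i$, $A_{i,i+1}=\frac{t-1}{t}\boldsymbol{T}_iA_{i+1,i+1}$, $A_{i,j}=\frac{1}{t}\boldsymbol{T}_iA_{i+1,j}\boldsymbol{T}_i$, and then a triangular inversion for the $B_{i,j}$. The only cosmetic difference is that you define $B_{i,k}$ by recursing on the second index via $\sum_{j=i}^{k}B_{i,j}A_{j,k}=\delta_{i,k}I$, whereas the paper recurses through the previously constructed $B_{j,m}$ with $B_{k,m}=-\sum_{j=k+1}^{m}A_{k,k}^{-1}A_{k,j}B_{j,m}$; both produce the unique inverse of the same upper-triangular operator system.
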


\begin{proof}
Suppose $i=N$ then $A_{N,N}=\left(  1-q\right)  \boldsymbol{w}^{\ast
}\boldsymbol{w}$ and $B_{N,N}=A_{N,N}^{-1}$ by (\ref{BF2.4}). Arguing by
induction suppose the statement is true for $k+1\leq i\leq N$. Then for any
$f,g\in\mathcal{P}_{\tau}$%
\begin{align*}
\left\langle f\mathcal{D}_{k},g\right\rangle  &  =\frac{1}{t}\left\langle
f\boldsymbol{T}_{k}\mathcal{D}_{k+1}\boldsymbol{T}_{k},g\right\rangle
=\frac{1}{t}\left\langle f\boldsymbol{T}_{k}\mathcal{D}_{k+1},g\boldsymbol{T}%
_{k}\right\rangle \\
&  =\frac{1}{t}\sum_{j=k+1}^{N}\left\langle f\boldsymbol{T}_{k},x_{j}\left(
g\boldsymbol{T}_{k}A_{k+1,j}\right)  \right\rangle =\frac{1}{t}\sum
_{j=k+1}^{N}\left\langle f,\left\{  x_{j}\left(  g\boldsymbol{T}_{k}%
A_{k+1,j}\right)  \right\}  \boldsymbol{T}_{k}\right\rangle .
\end{align*}
Then
\begin{align*}
\left\{  x_{k+1}\left(  g\boldsymbol{T}_{k}A_{k+1,k+1}\right)  \right\}
\boldsymbol{T}_{k}  &  =x_{k}\left(  g\boldsymbol{T}_{k}A_{k+1,k+1}%
\boldsymbol{T}_{k}\right)  +\left(  t-1\right)  x_{k+1}\left(  g\boldsymbol{T}%
_{k}A_{k+1,k+1}\right)  ,\\
\left\{  x_{j}\left(  g\boldsymbol{T}_{k}A_{k+1,j}\right)  \right\}
\boldsymbol{T}_{k}  &  =x_{j}\left(  g\boldsymbol{T}_{k}A_{k+1,j}%
\boldsymbol{T}_{k}\right)  .
\end{align*}
Thus set $A_{k,k}:=\frac{1}{t}\boldsymbol{T}_{k}A_{k+1,k+1}\boldsymbol{T}_{k}%
$, $A_{k,k+1}:=\frac{t-1}{t}\boldsymbol{T}_{k}A_{k+1,k+1}$ and $A_{k,j}%
:=\frac{1}{t}\boldsymbol{T}_{k}A_{k+1,j}\boldsymbol{T}_{k}$ for $j>k+1$. Next
\[
\left\langle f,x_{k}\left(  gA_{k,k}\right)  \right\rangle =\left\langle
f\mathcal{D}_{k},g\right\rangle -\sum_{j=k+1}^{N}\left\langle f,x_{j}\left(
gA_{k,j}\right)  \right\rangle .
\]
Replace $g$ by $gA_{k,k}^{-1}$ and use the inductive hypothesis:%
\begin{align*}
\left\langle f,x_{k}g\right\rangle  &  =\left\langle f\mathcal{D}_{k}%
,gA_{k,k}^{-1}\right\rangle +\sum_{m=k+1}^{N}\left\langle f\mathcal{D}%
_{m},gB_{k,m}\right\rangle ,\\
B_{k,m}  &  :=-\sum_{j=k+1}^{m}A_{k,k}^{-1}A_{k,j}B_{j,m},B_{k,k}%
:=A_{k,k}^{-1}.
\end{align*}
This completes the induction.
\end{proof}

This is an uniqueness result because it shows that inner products involving
any $f\in\mathcal{P}_{n}\otimes V_{\tau}$ are determined by inner products
with $\left\{  f\mathcal{D}_{i}:1\leq i\leq N\right\}  \subset\mathcal{P}%
_{n-1}\otimes V_{\tau}$. However the result does not prove existence or
symmetry. A closer look at the formulas shows that $\left(  1-q\right)
^{\left\vert \alpha\right\vert }\left\langle x^{\alpha}\otimes S,x^{\beta
}\otimes S^{\prime}\right\rangle $ is a Laurent polynomial in $q,t$ (a sum of
$q^{q}t^{b}$ with $a,b\in\mathbb{Z}$) for any $\alpha,\beta\in\mathbb{N}%
_{0}^{N}$, $S,S^{\prime}\in\mathcal{Y}\left(  \tau\right)  $.

Recall $u\left(  z\right)  :=\frac{\left(  1-tz\right)  \left(  t-z\right)
}{\left(  1-z\right)  ^{2}}$.

\begin{lemma}
Suppose (BF2) hold and suppose $\left(  \alpha,S\right)  $ satisfies
$\alpha_{i}<\alpha_{i+1}$ then with $\rho=\zeta_{\alpha,S}\left(  i+1\right)
/\zeta_{\alpha,S}\left(  i\right)  $
\[
\left\Vert M_{\alpha.s_{i},S}\right\Vert ^{2}=u\left(  \rho\right)  \left\Vert
M_{\alpha,S}\right\Vert ^{2}.
\]

\end{lemma}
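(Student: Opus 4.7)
The plan is to mirror the scalar proof of the corresponding proposition, now using the vector-valued three-term recurrences (\ref{MT>i}) and (\ref{MT<i}) together with self-adjointness of $\boldsymbol{T}_i$ from (\ref{BF2.2}). The two key ingredients are (i) orthogonality of $M_{\alpha,S}$ against $M_{\alpha.s_i, S}$, which follows from (BF2) because these simultaneous eigenfunctions of $\{\boldsymbol{\xi}_j\}$ have distinct spectral vectors under the genericity assumption on $(q,t)$; and (ii) the observation that applying (\ref{MT<i}) at $\alpha.s_i$ (which reverses the inequality) produces the same $\rho$, because $\zeta_{\alpha.s_i, S} = \zeta_{\alpha,S}.s_i$, so that $\zeta_{\alpha.s_i, S}(i)/\zeta_{\alpha.s_i, S}(i+1) = \zeta_{\alpha,S}(i+1)/\zeta_{\alpha,S}(i) = \rho$.

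First I would establish the orthogonality. From $\boldsymbol{\xi}_j = \frac{1}{t}\boldsymbol{T}_j\boldsymbol{\xi}_{j+1}\boldsymbol{T}_j$ combined with (\ref{BF2.2}) and (\ref{BF2.3}), one inductively obtains $\langle f\boldsymbol{\xi}_j, g\rangle = \langle f, g\boldsymbol{\xi}_j\rangle$ for every $j$; distinct simultaneous eigenfunctions of the commuting, self-adjoint family $\{\boldsymbol{\xi}_j\}$ are therefore orthogonal, so in particular $\langle M_{\alpha,S}, M_{\alpha.s_i, S}\rangle = 0$. Second, I would pair (\ref{MT>i}), namely $M_{\alpha.s_i, S} = M_{\alpha,S}\boldsymbol{T}_i + \tfrac{1-t}{1-\rho}M_{\alpha,S}$, with $M_{\alpha.s_i, S}$ on the right to obtain $\langle M_{\alpha,S}\boldsymbol{T}_i, M_{\alpha.s_i, S}\rangle = \|M_{\alpha.s_i, S}\|^2$. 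Third, applying (\ref{MT<i}) at $\alpha.s_i$ yields $M_{\alpha.s_i, S}\boldsymbol{T}_i = u(\rho)\, M_{\alpha,S} + \tfrac{\rho(1-t)}{1-\rho}\, M_{\alpha.s_i, S}$; pairing with $M_{\alpha,S}$ on the left and using orthogonality gives $\langle M_{\alpha,S}, M_{\alpha.s_i, S}\boldsymbol{T}_i\rangle = u(\rho)\|M_{\alpha,S}\|^2$. Equating the two inner products via (\ref{BF2.2}) yields the claim.

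I do not expect a serious obstacle: the proof is essentially a transcription of its scalar antecedent, and the scalar formulas (\ref{MT1})--(\ref{MT2}) have already been upgraded to (\ref{MT>i})--(\ref{MT<i}) in the same shape. The only delicate point is the symmetric handling of $\alpha_i < \alpha_{i+1}$ versus $\alpha_i > \alpha_{i+1}$, resolved by the observation above that $\rho$ is invariant under the exchange $\alpha \leftrightarrow \alpha.s_i$.
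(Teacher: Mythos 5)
Your proof is correct and follows essentially the same route as the paper: pair the recurrence (\ref{MT>i}) with $M_{\alpha.s_i,S}$, pair (\ref{MT<i}) applied at $\alpha.s_i$ with $M_{\alpha,S}$, use the mutual orthogonality coming from self-adjointness of the $\boldsymbol{\xi}_j$, and equate via (\ref{BF2.2}). Your explicit check that $\rho$ is unchanged under $\alpha\leftrightarrow\alpha.s_i$ (via $\zeta_{\alpha.s_i,S}=\zeta_{\alpha,S}.s_i$, or simply $u(\rho)=u(\rho^{-1})$) is a point the paper leaves implicit but is handled correctly.
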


\begin{proof}
From (\ref{MT>i}) and (\ref{MT<i})%
\begin{align*}
M_{\alpha,S}\boldsymbol{T}_{i}  &  =-\frac{1-t}{1-\rho}M_{\alpha,S}%
+M_{\alpha.s_{i},S},\\
M_{\alpha.s_{i},S}\boldsymbol{T}_{i}  &  =\frac{\left(  1-t\rho\right)
\left(  t-\rho\right)  }{\left(  1-\rho\right)  ^{2}}M_{\alpha,S}+\frac
{\rho\left(  1-t\right)  }{\left(  1-\rho\right)  }M_{\alpha.s_{i},S}.
\end{align*}
Take the inner product of the first equation with $p$ and use $\left\langle
M_{\alpha,S},M_{\alpha.s_{i},S}\right\rangle =0$, then take the inner product
of the second equation with $M_{\alpha,S}$ and again use $\left\langle
M_{\alpha,S},M_{\alpha.s_{i},S}\right\rangle =0$ to obtain%
\begin{align*}
\left\langle M_{\alpha,S}\boldsymbol{T}_{i},M_{\alpha.s_{i},S}\right\rangle
&  =\left\Vert M_{\alpha.s_{i},S}\right\Vert ^{2},\\
\left\langle M_{\alpha,S},M_{\alpha.s_{i},S}\boldsymbol{T}_{i}\right\rangle
&  =\frac{\left(  1-t\rho\right)  \left(  t-\rho\right)  }{\left(
1-\rho\right)  ^{2}}\left\Vert M_{\alpha,S}\right\Vert ^{2}.
\end{align*}
The hypothesis $\left\langle M_{\alpha,S}\boldsymbol{T}_{i},p\right\rangle
=\left\langle M_{\alpha,S},M_{\alpha.s_{i},S}\boldsymbol{T}_{i}\right\rangle $
completes the proof.
\end{proof}

\begin{lemma}
Suppose (BF2) hold and suppose $\left(  \alpha,S\right)  $ satisfies
$\alpha_{i}=\alpha_{i+1},j=r_{\alpha}\left(  i\right)  ,c\left(  j,S\right)
-c\left(  j+1,S\right)  \geq2$ then with $\rho=\zeta_{\alpha,S}\left(
i+1\right)  /\zeta_{\alpha,S}\left(  i\right)  =t^{c\left(  j+1,S\right)
-c\left(  j,S\right)  }$%
\[
\left\Vert M_{\alpha,S^{\left(  j\right)  }}\right\Vert ^{2}=u\left(
\rho\right)  \left\Vert M_{\alpha,S}\right\Vert ^{2}=\frac{\left\Vert
S^{\left(  j\right)  }\right\Vert _{0}^{2}}{\left\Vert S\right\Vert _{0}^{2}%
}\left\Vert M_{\alpha,S}\right\Vert ^{2}.
\]

\end{lemma}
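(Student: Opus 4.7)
The plan is to mimic the proof of the preceding lemma, substituting the formulas (\ref{MT=i}) and (\ref{MT=2}) for (\ref{MT>i}) and (\ref{MT<i}). The key observation is that the transition between $M_{\alpha,S}$ and $M_{\alpha,S^{(j)}}$ is formally identical in structure to the transition between $M_{\alpha,S}$ and $M_{\alpha.s_i,S}$: both involve a pair of $\boldsymbol{\xi}$-eigenfunctions linked by $\boldsymbol{T}_i$ with coefficients depending on a single ratio $\rho$.

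First I would note that $M_{\alpha,S}$ and $M_{\alpha,S^{(j)}}$ have distinct spectral vectors (they agree outside positions $i,i+1$ and differ there because $c(j,S)\neq c(j+1,S)$), so by (BF2.2)–(BF2.3) they are orthogonal. Next, I would take the inner product of (\ref{MT=i}) with $M_{\alpha,S^{(j)}}$; the second term drops out and one obtains $\|M_{\alpha,S^{(j)}}\|^{2}=\langle M_{\alpha,S}\boldsymbol{T}_i,M_{\alpha,S^{(j)}}\rangle$. Applying (BF2.2) transposes $\boldsymbol{T}_i$ to the right side, giving $\langle M_{\alpha,S},M_{\alpha,S^{(j)}}\boldsymbol{T}_i\rangle$.

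To evaluate $M_{\alpha,S^{(j)}}\boldsymbol{T}_i$ I would invoke (\ref{MT=2}), now applied with $S$ replaced by $S^{(j)}$. One checks that $c(j,S^{(j)})-c(j+1,S^{(j)})=c(j+1,S)-c(j,S)\leq -2$, so the hypothesis is met; that $(S^{(j)})^{(j)}=S$; and that the $\rho$ appearing in (\ref{MT=2}) in this setting equals $t^{c(j,S^{(j)})-c(j+1,S^{(j)})}=t^{c(j+1,S)-c(j,S)}$, i.e.\ the very $\rho$ of the lemma statement. Substituting and using $\langle M_{\alpha,S},M_{\alpha,S^{(j)}}\rangle=0$ once more yields
\[
\langle M_{\alpha,S},M_{\alpha,S^{(j)}}\boldsymbol{T}_i\rangle=\frac{(1-t\rho)(t-\rho)}{(1-\rho)^{2}}\,\|M_{\alpha,S}\|^{2}=u(\rho)\,\|M_{\alpha,S}\|^{2},
\]
which establishes the first asserted equality.

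For the second equality I would appeal to Proposition \ref{Sform}: since $c(j,S)-c(j+1,S)\geq 2$, we have $\|S^{(j)}\|_{0}^{2}=u(t^{c(j,S)-c(j+1,S)})\|S\|_{0}^{2}=u(\rho^{-1})\|S\|_{0}^{2}=u(\rho)\|S\|_{0}^{2}$, using $u(z)=u(z^{-1})$. There is no real obstacle beyond this bookkeeping; the only point demanding care is to verify that the same parameter $\rho$ serves both (\ref{MT=i}) applied to $S$ and (\ref{MT=2}) applied to $S^{(j)}$, so that the two calculations interlock cleanly just as in the $\alpha_i<\alpha_{i+1}$ case.
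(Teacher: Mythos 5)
Your proof is correct and follows exactly the route the paper takes: it repeats the argument of the preceding lemma with formulas (\ref{MT=i}) and (\ref{MT=2}) in place of (\ref{MT>i}) and (\ref{MT<i}), and then invokes Proposition \ref{Sform} together with $u(z)=u(z^{-1})$ for the second equality. The details you supply (orthogonality from the distinct spectral vectors, $(S^{(j)})^{(j)}=S$, and the matching of the two occurrences of $\rho$) are precisely the bookkeeping the paper leaves implicit.
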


\begin{proof}
Using the same argument as in the previous lemma on formulas (\ref{MT=i}) and
(\ref{MT=2}) shows $\left\Vert M_{\alpha,S^{\left(  j\right)  }}\right\Vert
^{2}=u\left(  \rho\right)  \left\Vert M_{\alpha,S}\right\Vert ^{2}$.
Proposition \ref{Sform} asserted that $u\left(  \rho\right)  =\left\Vert
S^{\left(  j\right)  }\right\Vert _{0}^{2}/\left\Vert S\right\Vert _{0}^{2}$ .
\end{proof}

\begin{definition}
For $\alpha\in\mathbb{N}_{0}^{N},S\in\mathcal{Y}\left(  \tau\right)  $ let%
\[
\mathcal{E}\left(  \alpha,S\right)  :=\prod\limits_{1\leq i<j\leq N,\alpha
_{i}<\alpha_{j}}u\left(  q^{\alpha_{j}-\alpha_{i}}t^{c\left(  r_{\alpha
}\left(  j\right)  ,S\right)  -c\left(  r_{\alpha}\left(  i\right)  ,S\right)
}\right)  .
\]

\end{definition}

There are $\mathrm{inv}\left(  \alpha\right)  $ terms in $\mathcal{E}\left(
\alpha,S\right)  $.

\begin{lemma}
\label{Mdiff2}Suppose $\alpha\in\mathbb{N}_{0}^{N},S\in\mathcal{Y}\left(
\tau\right)  $ then $M_{\alpha\Phi,S}\mathcal{D}_{N}=\left(  1-q\zeta
_{\alpha,S}\left(  1\right)  \right)  M_{\alpha,S}\boldsymbol{w}$.
\end{lemma}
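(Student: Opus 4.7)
The plan is to imitate the proof of the analogous scalar lemma (the unlabeled lemma preceding Proposition \ref{phinorm1}), since every ingredient used there has a direct vector-valued counterpart available in the preceding subsections.

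First I would unfold the definition $\mathcal{D}_{N}=\left(1-\boldsymbol{\xi}_{N}\right)/x_{N}$ and write
\[
M_{\alpha\Phi,S}\mathcal{D}_{N}=\frac{1}{x_{N}}\bigl(M_{\alpha\Phi,S}-M_{\alpha\Phi,S}\boldsymbol{\xi}_{N}\bigr).
\]
Since $M_{\alpha\Phi,S}$ is a simultaneous eigenfunction of the commuting Cherednik operators $\boldsymbol{\xi}_{i}$, we have $M_{\alpha\Phi,S}\boldsymbol{\xi}_{N}=\zeta_{\alpha\Phi,S}(N)\,M_{\alpha\Phi,S}$, so the right-hand side becomes $\bigl(1-\zeta_{\alpha\Phi,S}(N)\bigr)M_{\alpha\Phi,S}/x_{N}$.

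Next I would apply the two facts supplied by the affine step of the Yang--Baxter graph: the spectral-vector rule
\[
\zeta_{\alpha\Phi,S}=\bigl(\zeta_{\alpha,S}(2),\ldots,\zeta_{\alpha,S}(N),q\zeta_{\alpha,S}(1)\bigr),
\]
so that $\zeta_{\alpha\Phi,S}(N)=q\zeta_{\alpha,S}(1)$; and the polynomial rule $M_{\alpha\Phi,S}(x)=x_{N}\bigl(M_{\alpha,S}\boldsymbol{w}\bigr)(x)$. Substituting both yields
\[
M_{\alpha\Phi,S}\mathcal{D}_{N}=\bigl(1-q\zeta_{\alpha,S}(1)\bigr)\,M_{\alpha,S}\boldsymbol{w},
\]
which is the claim. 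I expect no real obstacle: the only subtle point is that one must be sure $M_{\alpha\Phi,S}$ is actually a $\boldsymbol{\xi}_{N}$-eigenfunction with eigenvalue $\zeta_{\alpha\Phi,S}(N)=q\zeta_{\alpha,S}(1)$, but this is exactly the content of the affine step recorded in the Yang--Baxter description just above the lemma. The argument is therefore an essentially formal three-line computation that parallels the scalar proof verbatim, with $\boldsymbol{\xi}_{N},\boldsymbol{w}$ replacing $\xi_{N},w$ and with the $S$-index carried along passively.
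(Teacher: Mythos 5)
Your proposal is correct and follows essentially the same route as the paper's proof: unfold $\mathcal{D}_{N}=(1-\boldsymbol{\xi}_{N})/x_{N}$, use the eigenvalue $\zeta_{\alpha\Phi,S}(N)=q\zeta_{\alpha,S}(1)$, and cancel $x_{N}$ via the affine-step formula $M_{\alpha\Phi,S}=x_{N}\left(M_{\alpha,S}\boldsymbol{w}\right)$. No gaps.
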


\begin{proof}
By definition%
\begin{align*}
M_{\alpha\Phi,S}\mathcal{D}_{N}  &  =\left(  1/x_{N}\right)  M_{\alpha\Phi
,S}\left(  I-\boldsymbol{\xi}_{N}\right)  =\left(  1/x_{N}\right)  \left(
1-\zeta_{\alpha\Phi,S}\left(  N\right)  \right)  M_{\alpha\Phi,S}\\
&  =\left(  1-q\zeta_{\alpha,S}\left(  1\right)  \right)  M_{\alpha
,S}\boldsymbol{w}.
\end{align*}

\end{proof}

The following is proved exactly like Propositions \ref{e-norm1} and
\ref{phinorm1}.

\begin{proposition}
\label{ephinorm}Suppose (BF2) holds and $\alpha\in\mathbb{N}_{0}^{N}%
,S\in\mathcal{Y}\left(  \tau\right)  $ then
\begin{align*}
\left\Vert M_{\alpha^{+},S}\right\Vert ^{2}  &  =\mathcal{E}\left(
\alpha,S\right)  \left\Vert M_{\alpha,S}\right\Vert ^{2},\\
\left\Vert M_{\alpha,\Phi,S}\right\Vert ^{2}  &  =\frac{1-q^{\alpha_{1}%
+1}t^{c\left(  r_{\alpha}\left(  1\right)  ,S\right)  }}{1-q}\left\Vert
M_{\alpha,S}\right\Vert ^{2}.
\end{align*}

\end{proposition}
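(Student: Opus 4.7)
The plan is to prove both identities exactly as in the scalar case (Propositions \ref{e-norm1} and \ref{phinorm1}), since the two preceding lemmas already supply the single-step norm ratios for the covering moves (both when $\alpha_i<\alpha_{i+1}$ and when $\alpha_i=\alpha_{i+1}$ but the tableau changes via $S\to S^{(j)}$), and Lemma \ref{Mdiff2} provides the key identity for the affine step.

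For the first identity $\|M_{\alpha^+,S}\|^2=\mathcal{E}(\alpha,S)\|M_{\alpha,S}\|^2$ I would induct on $\mathrm{inv}(\alpha)$. The base case $\alpha=\alpha^+$ gives $\mathrm{inv}(\alpha)=0$, $\mathcal{E}(\alpha,S)=1$, and the identity is trivial. For the inductive step, pick any $i$ with $\alpha_i<\alpha_{i+1}$; then $\mathrm{inv}(\alpha.s_i)=\mathrm{inv}(\alpha)-1$, $(\alpha.s_i)^+=\alpha^+$, and the move preserves $S$. By the first lemma of this subsection, $\|M_{\alpha.s_i,S}\|^2=u(\rho)\|M_{\alpha,S}\|^2$ with $\rho=\zeta_{\alpha,S}(i+1)/\zeta_{\alpha,S}(i)=q^{\alpha_{i+1}-\alpha_i}t^{c(r_\alpha(i+1),S)-c(r_\alpha(i),S)}$. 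It then remains to check the combinatorial identity $\mathcal{E}(\alpha,S)/\mathcal{E}(\alpha.s_i,S)=u(\rho)$. Since $r_{\alpha.s_i}(i)=r_\alpha(i+1)$, $r_{\alpha.s_i}(i+1)=r_\alpha(i)$, and $r_{\alpha.s_i}(k)=r_\alpha(k)$ for $k\notin\{i,i+1\}$, the factors indexed by pairs disjoint from $\{i,i+1\}$ coincide, the factors touching exactly one element of $\{i,i+1\}$ are merely relabeled between the two products, and the only surviving term is the $(i,i+1)$-factor, which appears in $\mathcal{E}(\alpha,S)$ (because $\alpha_i<\alpha_{i+1}$) but not in $\mathcal{E}(\alpha.s_i,S)$; this single factor equals $u(\rho)$.

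For the second identity $\|M_{\alpha\Phi,S}\|^2=\frac{1-q\zeta_{\alpha,S}(1)}{1-q}\|M_{\alpha,S}\|^2$ I would mimic the argument of Proposition \ref{phinorm1}. Take any $g\in\mathcal{P}_\tau$ of degree $|\alpha|$. By Lemma \ref{Mdiff2} and the relation $\langle f\boldsymbol{w},g\rangle=\langle f,g\boldsymbol{w}^*\rangle$,
\[
\langle M_{\alpha\Phi,S}\mathcal{D}_N,g\rangle=(1-q\zeta_{\alpha,S}(1))\langle M_{\alpha,S}\boldsymbol{w},g\rangle=(1-q\zeta_{\alpha,S}(1))\langle M_{\alpha,S},g\boldsymbol{w}^*\rangle.
\]
Specializing $g=M_{\alpha,S}(\boldsymbol{w}^*)^{-1}$ turns the right-hand side into $(1-q\zeta_{\alpha,S}(1))\|M_{\alpha,S}\|^2$. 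On the other hand, hypothesis (\ref{BF2.4}) together with the affine-step formula $M_{\alpha\Phi,S}=x_N(M_{\alpha,S}\boldsymbol{w})$ gives
\[
\langle M_{\alpha\Phi,S}\mathcal{D}_N,g\rangle=(1-q)\langle M_{\alpha\Phi,S},x_N(g\boldsymbol{w}^*\boldsymbol{w})\rangle=(1-q)\|M_{\alpha\Phi,S}\|^2,
\]
since $g\boldsymbol{w}^*\boldsymbol{w}=M_{\alpha,S}\boldsymbol{w}$. Equating the two evaluations produces the claim.

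The only real obstacle is the bookkeeping needed for the ratio $\mathcal{E}(\alpha,S)/\mathcal{E}(\alpha.s_i,S)$: one must carefully match each $u$-factor across the two products after the simultaneous swap $i\leftrightarrow i+1$ of positions and $\alpha_i\leftrightarrow\alpha_{i+1}$ of values, using that the ranks transpose accordingly. This matching is mechanically identical to the scalar argument once one substitutes $c(r_\alpha(i),S)$ for $N-r_\alpha(i)$ in the exponent of $t$, so no genuinely new complication arises.
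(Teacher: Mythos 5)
Your proposal is correct and follows exactly the route the paper intends: the paper's entire proof of this proposition is the remark that it "is proved exactly like Propositions \ref{e-norm1} and \ref{phinorm1}," and your induction on $\mathrm{inv}(\alpha)$ with the single-factor cancellation in $\mathcal{E}(\alpha,S)/\mathcal{E}(\alpha.s_i,S)$, together with the Lemma \ref{Mdiff2} plus (\ref{BF2.4}) argument specialized at $g=M_{\alpha,S}(\boldsymbol{w}^{\ast})^{-1}$, is precisely that adaptation. Nothing is missing.
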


The intention here is to find the explicit formula for $\left\Vert
M_{\alpha,S}\right\Vert ^{2}$ implied by (BF2) and then prove that as a
definition it satisfies (BF2). We use the same inductive scheme as in Section
\ref{SMD}.

Suppose (BF2) holds and $\lambda\in\mathbb{N}_{0}^{N,+},S\in\mathcal{Y}\left(
\tau\right)  $ and $\lambda_{m}>0=\lambda_{m+1}$ then set%
\begin{align}
\alpha &  :=\left(  \lambda_{1},\ldots,\lambda_{m-1},0,\ldots0,\lambda
_{m}\right)  ,r_{\alpha}=\left(  1,\ldots,m-1,m+1,\ldots,N,m\right)
,\label{abgamma}\\
\beta &  :=\left(  \lambda_{m}-1,\lambda_{1},\ldots,\lambda_{m-1}%
,0,\ldots\right)  ,r_{\beta}=\left(  m,1,\ldots,m-1,m+1,\ldots,N\right)
,\nonumber\\
\gamma &  :=\left(  \lambda_{1},\ldots,\lambda_{m-1},\lambda_{m}%
-1,0,\ldots\right)  =\beta^{+}.\nonumber
\end{align}
Thus $\left\Vert M_{\lambda,S}\right\Vert ^{2}=\mathcal{E}\left(
\alpha,S\right)  \left\Vert M_{\alpha,S}\right\Vert ^{2}$ and $\left\Vert
M_{\beta,S}\right\Vert ^{2}=\mathcal{E}\left(  \beta,S\right)  ^{-1}\left\Vert
M_{\gamma,S}\right\Vert ^{2}$; by Proposition \ref{ephinorm} $\left\Vert
M_{\alpha,S}\right\Vert ^{2}=\dfrac{1-q^{\lambda_{m}}t^{c\left(  m,S\right)
}}{1-q}\left\Vert M_{\beta,S}\right\Vert ^{2}$. Also $\alpha.\left(
s_{N-1}s_{N-2}\cdots s_{m}\right)  =\lambda$ and $\beta.\left(  s_{1}%
s_{2}\cdots s_{m-1}\right)  =\gamma$ thus $R_{\alpha}=T_{m}^{-1}\cdots
T_{N-1}^{-1}$ and $R_{\beta}=T_{m-1}^{-1}\cdots T_{1}^{-1}$. The leading term
of $M_{\beta,S}$ is $\eta\left(  \beta,S\right)  x^{\beta}\otimes S\tau\left(
R_{\beta}\right)  $, so the leading term of $M_{\gamma,S}$ is $\eta\left(
\beta,S\right)  x^{\gamma}\otimes S$ (and $\eta\left(  \gamma,S\right)
=\eta\left(  \beta,S\right)  $).

Apply $\boldsymbol{w}$ to $M_{\beta,S}$ then%
\begin{align*}
x_{N}\left(  \left(  x^{\beta}\right)  w\right)  S\tau\left(  R_{\beta}%
\omega\right)   &  =q^{\beta_{1}}x^{\alpha}\otimes S\tau\left(  \left(
T_{m-1}^{-1}\cdots T_{1}^{-1}\right)  T_{1}\cdots T_{N-1}\right) \\
&  =q^{\beta_{1}}x^{\alpha}\otimes S\tau\left(  T_{m}\cdots T_{N-1}\right)  ,
\end{align*}
and%
\begin{align*}
S\tau\left(  T_{m}\cdots T_{N-1}\right)   &  =S\tau\left(  \left(  T_{m}\cdots
T_{N-1}\right)  \left(  T_{N-1}\cdots T_{m}\right)  R_{\alpha}\right) \\
&  =t^{N-m}S\tau\left(  \phi_{m}R_{\alpha}\right)  =t^{N-m+c\left(
m,S\right)  }S\tau\left(  R_{\alpha}\right)  ,
\end{align*}
thus
\begin{align*}
\eta\left(  \alpha,S\right)   &  =q^{\lambda_{m}-1}t^{N-m+c\left(  m,S\right)
}\eta\left(  \beta,S\right)  ,\\
\eta\left(  \lambda,S\right)   &  =\eta\left(  \alpha,S\right)  =q^{\lambda
_{m}-1}t^{N-m+c\left(  m,S\right)  }\eta\left(  \gamma,S\right)  .
\end{align*}
Compute%
\begin{align*}
\mathcal{E}\left(  \alpha,S\right)   &  =\prod\limits_{j=m+1}^{N}u\left(
q^{\lambda_{m}}t^{c\left(  m,S\right)  -c\left(  j,S\right)  }\right)  ,\\
\mathcal{E}\left(  \beta,S\right)   &  =\prod\limits_{i=1}^{m-1}u\left(
q^{\lambda_{i}-\lambda_{m}+1}t^{c\left(  i,S\right)  -c\left(  m,S\right)
}\right)  .
\end{align*}

The argument also shows that $\eta\left(  \lambda,S\right)  =q^{\Sigma
_{1}\left(  \lambda\right)  }t^{\Sigma_{2}\left(  \lambda,S\right)  }$ where
$\Sigma_{1}\left(  \lambda\right)  :=\frac{1}{2}\sum_{i=1}^{N}\lambda
_{i}\left(  \lambda_{i}-1\right)  $ and $\Sigma_{2}\left(  \lambda,S\right)
=\sum_{i=1}^{N}\lambda_{i}\left(  N-i+c\left(  i,S\right)  \right)  $. Recall
$k\left(  \lambda\right)  =\sum_{i=1}^{N}\left(  N-2i+1\right)  \lambda_{i}$
for $\lambda\in\mathbb{N}_{0}^{N,+}$.

\begin{theorem}
Suppose (BF2) holds, $\lambda\in\mathbb{N}_{0}^{N,+}$ and $S\in\mathcal{Y}%
\left(  \tau\right)  $ then%
\begin{align}
\left\Vert M_{\lambda,S}\right\Vert ^{2}  &  =t^{k\left(  \lambda\right)
}\left\Vert S\right\Vert _{0}^{2}\ \left(  1-q\right)  ^{-\left\vert
\lambda\right\vert }\prod\limits_{i=1}^{N}\left(  qt^{c\left(  i,S\right)
};q\right)  _{\lambda_{i}}\label{normdef2}\\
&  \times\prod\limits_{1\leq i<j\leq N}\frac{\left(  qt^{c\left(  i,S\right)
-c\left(  j,S\right)  -1};q\right)  _{\lambda_{i}-\lambda_{j}}\left(
qt^{c\left(  i,S\right)  -c\left(  j,S\right)  +1};q\right)  _{\lambda
_{i}-\lambda_{j}}}{\left(  qt^{c\left(  i,S\right)  -c\left(  j,S\right)
};q\right)  _{\lambda_{i}-\lambda_{j}}^{2}}.\nonumber
\end{align}

\end{theorem}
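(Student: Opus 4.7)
The plan is an induction on $\left\vert \lambda\right\vert$ using the reductions already established: Proposition \ref{ephinorm} gives the effect of sorting and of the affine step $\Phi$, while the auxiliary compositions $\alpha,\beta,\gamma$ from (\ref{abgamma}) realize the passage from $\lambda$ to the partition $\gamma$ obtained by decrementing $\lambda_m$, where $m$ is the last nonzero index of $\lambda$. The base case $\lambda=\boldsymbol{0}$ is immediate from (\ref{BF2.1}): $M_{\boldsymbol{0},S}=1\otimes S$, so $\left\Vert M_{\boldsymbol{0},S}\right\Vert ^{2}=\left\Vert S\right\Vert _{0}^{2}$, which also equals the right-hand side of (\ref{normdef2}) since every $q$-Pochhammer collapses to $(\,\cdot\,;q)_{0}=1$ and $k(\boldsymbol{0})=0$.

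For the inductive step, chaining the three identities
\[
\left\Vert M_{\lambda,S}\right\Vert ^{2}=\mathcal{E}(\alpha,S)\left\Vert M_{\alpha,S}\right\Vert ^{2},\ \left\Vert M_{\alpha,S}\right\Vert ^{2}=\frac{1-q^{\lambda_{m}}t^{c(m,S)}}{1-q}\left\Vert M_{\beta,S}\right\Vert ^{2},\ \left\Vert M_{\gamma,S}\right\Vert ^{2}=\mathcal{E}(\beta,S)\left\Vert M_{\beta,S}\right\Vert ^{2}
\]
(the middle one from Proposition \ref{ephinorm} with $q\zeta_{\beta,S}(1)=q^{\lambda_{m}}t^{c(m,S)}$) yields
\[
\frac{\left\Vert M_{\lambda,S}\right\Vert ^{2}}{\left\Vert M_{\gamma,S}\right\Vert ^{2}}=\frac{1-q^{\lambda_{m}}t^{c(m,S)}}{1-q}\cdot\frac{\mathcal{E}(\alpha,S)}{\mathcal{E}(\beta,S)},
\]
and the task reduces to checking that the claimed formula (\ref{normdef2}) obeys this same recursion under $\lambda\to\gamma$.

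Only the $m$-th coordinate changes, so only the single-index factor $(qt^{c(m,S)};q)_{\lambda_m}$ and the pair-factors involving index $m$ are affected. Using $(z;q)_{n+1}/(z;q)_{n}=1-zq^{n}$, the single-index factor contributes $1-q^{\lambda_{m}}t^{c(m,S)}$, the $t^{k(\lambda)}$ factor contributes $t^{N-2m+1}$, and the prefactor $(1-q)^{-\left\vert \lambda\right\vert }$ contributes $(1-q)^{-1}$. The pair-factors require the observation $u(z)=t(1-tz)(1-z/t)/(1-z)^{2}$, which identifies each pair-factor in (\ref{normdef2}) as a telescoping product of $u(\,\cdot\,)$ values divided by a power of $t$. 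A direct calculation then shows that each pair-factor ratio with $j=m$, $i<m$ equals $t/u(q^{\lambda_{i}-\lambda_{m}+1}t^{c(i,S)-c(m,S)})$, and each pair-factor ratio with $i=m$, $j>m$ equals $u(q^{\lambda_{m}}t^{c(m,S)-c(j,S)})/t$. Multiplying over $i$ and $j$ produces $t^{m-1}/\mathcal{E}(\beta,S)$ and $\mathcal{E}(\alpha,S)/t^{N-m}$ respectively, and the accumulated $t$-exponents balance: $(N-2m+1)+(m-1)-(N-m)=0$.

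The main obstacle is not conceptual but algebraic: one must recognize the somewhat unusual pair-factor $(qt^{c(i,S)-c(j,S)-1};q)_{\lambda_{i}-\lambda_{j}}(qt^{c(i,S)-c(j,S)+1};q)_{\lambda_{i}-\lambda_{j}}/(qt^{c(i,S)-c(j,S)};q)_{\lambda_{i}-\lambda_{j}}^{2}$ in (\ref{normdef2}) as precisely the $q$-Pochhammer encoding of a product of $u$-values built from the rescaling $u(z)=t(1-tz)(1-z/t)/(1-z)^{2}$, and then carefully track the $t$-powers arising from $\mathcal{E}(\alpha,S)$, $\mathcal{E}(\beta,S)$, and $t^{k(\lambda)}$ to see that they collapse exactly. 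Once this bookkeeping is done the induction closes without further input.
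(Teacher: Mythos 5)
Your proof is correct and follows essentially the same route as the paper: both verify that the right-hand side of (\ref{normdef2}) satisfies the recursion $\left\Vert M_{\lambda,S}\right\Vert ^{2}=\frac{1-q^{\lambda_{m}}t^{c\left(  m,S\right)  }}{1-q}\frac{\mathcal{E}\left(  \alpha,S\right)  }{\mathcal{E}\left(  \beta,S\right)  }\left\Vert M_{\gamma,S}\right\Vert ^{2}$ coming from the chain $\lambda\to\alpha\to\beta\to\gamma$ of (\ref{abgamma}) and Proposition \ref{ephinorm}, with base case $\lambda=\boldsymbol{0}$. Your identification of the pair-factor ratios as $t/u(\cdot)$ and $u(\cdot)/t$ and the resulting cancellation of $t$-powers against $t^{k(\lambda)-k(\gamma)}=t^{N-2m+1}$ is exactly the computation $\Pi_{\lambda}/\Pi_{\gamma}=t^{2m-1-N}\mathcal{E}(\alpha,S)/\mathcal{E}(\beta,S)$ in the paper.
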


\begin{proof}
Denote the $\left(  i,j\right)  $-product by $\Pi_{\lambda}$. Suppose
$\lambda_{m}>0=\lambda_{m+1}$ and $\gamma=\left(  \lambda_{1},\ldots
,\lambda_{m-1},\lambda_{m}-1,0,\ldots\right)  $.with $\alpha,\beta$ as in
(\ref{abgamma}). Then%
\begin{align*}
\frac{\Pi_{\lambda}}{\Pi_{\gamma}}  &  =\prod\limits_{i=1}^{m-1}\frac{\left(
1-q^{\lambda_{i}-\lambda_{m}+1}t^{c\left(  i,S\right)  -c\left(  m,S\right)
}\right)  ^{2}}{\left(  1-q^{\lambda_{i}-\lambda_{m}+1}t^{c\left(  i,S\right)
-c\left(  m,S\right)  -1}\right)  \left(  1-q^{\lambda_{i}-\lambda_{m}%
+1}t^{c\left(  i,S\right)  -c\left(  m,S\right)  +1}\right)  }\\
&  \times\prod\limits_{j=m+1}^{N}\frac{\left(  1-q^{\lambda_{m}}t^{c\left(
m,S\right)  -c\left(  j,S\right)  -1}\right)  \left(  1-q^{\lambda_{m}%
}t^{c\left(  m,S\right)  -c\left(  j,S\right)  +1}\right)  }{\left(
1-q^{\lambda_{m}}t^{c\left(  m,S\right)  -c\left(  j,S\right)  }\right)  ^{2}%
}\\
&  =t^{2m-1-N}\prod\limits_{i=1}^{m-1}u\left(  q^{\lambda_{i}-\lambda_{m}%
+1}t^{c\left(  i,S\right)  -c\left(  m,S\right)  }\right)  ^{-1}%
\prod\limits_{j=m+1}^{N}u\left(  q^{\lambda_{m}}t^{c\left(  m,S\right)
-c\left(  j,S\right)  }\right) \\
&  =t^{2m-1-N}\mathcal{E}\left(  \alpha,S\right)  /\mathcal{E}\left(
\beta,S\right)  .
\end{align*}
Also $\prod\limits_{i=1}^{N}\left(  qt^{c\left(  i,S\right)  };q\right)
_{\lambda_{i}}/\prod\limits_{i=1}^{N}\left(  qt^{c\left(  i,S\right)
};q\right)  _{\gamma_{i}}=1-q^{\lambda_{m}}t^{c\left(  m,S\right)  }$. The
formula satisfies the relation $\left\Vert M_{\lambda,S}\right\Vert
^{2}=\dfrac{1-q^{\lambda_{m}}t^{c\left(  m,S\right)  }}{1-q}\dfrac
{\mathcal{E}\left(  \alpha,S\right)  }{\mathcal{E}\left(  \beta,S\right)
}\left\Vert M_{\gamma,S}\right\Vert ^{2}$ and is valid at $\lambda
=\boldsymbol{0}$ since $M_{\boldsymbol{0},S}=1\otimes S$ and $\left\Vert
1\otimes S\right\Vert ^{2}=\left\Vert S\right\Vert _{0}^{2}$.
\end{proof}

\begin{definition}
The symmetric bilinear form is given by (\ref{normdef2}) for $\lambda
\in\mathbb{N}_{0}^{N,+},S\in\mathcal{Y}\left(  \tau\right)  $, by $\left\Vert
M_{\alpha,S}\right\Vert ^{2}=\mathcal{E}\left(  \alpha,S\right)
^{-1}\left\Vert M_{\alpha^{+},S}\right\Vert ^{2}$ for $\alpha\in\mathbb{N}%
_{0}^{N}$ and by $\left\langle M_{\alpha,S},M_{\beta,S^{\prime}}\right\rangle
=0$ for $\left(  \alpha,S\right)  \neq\left(  \beta,S^{\prime}\right)  .$
\end{definition}

Next we show that the definition satisfies the hypotheses (BF2).

The step $s_{i}$ with $\alpha_{i}<\alpha_{i+1}$ satisfies (\ref{BF2.2})
because of the value $\dfrac{\mathcal{E}\left(  \alpha.s_{i},S\right)
}{\mathcal{E}\left(  \alpha,S\right)  }$. It remains to check the step with
$\alpha_{i}=\alpha_{i+1}$ and the affine step. The $\left(  i,j\right)
$-product in (\ref{normdef2}) can be written as (note $t^{-1}u\left(
z\right)  =\frac{\left(  1-z/t\right)  \left(  1-tz\right)  }{\left(
1-x\right)  ^{2}}$)%
\[
\prod\limits_{1\leq i<j\leq N}t^{\lambda_{j}-\lambda_{i}}\prod\limits_{l=1}%
^{\lambda_{i}-\lambda_{j}}u\left(  q^{l}t^{c\left(  i,S\right)  -c\left(
j,S\right)  }\right)  .
\]
Suppose $\alpha\in\mathbb{N}_{0}^{N}$ and $\lambda:=\alpha^{+}$; in the
formula for $\mathcal{E}\left(  \alpha,S\right)  $ the condition $\left(
i<j\right)  \&\left(  \alpha_{i}<\alpha_{j}\right)  $ is equivalent to
$\left(  i<j\right)  \&\left(  r_{\alpha}\left(  i\right)  >r_{\alpha}\left(
j\right)  \right)  $. Let $v_{\alpha}=r_{\alpha}^{-1}$ so that $\lambda
_{i}=\alpha_{v_{\alpha}\left(  i\right)  }$ then the product can be indexed by
$\left(  v_{\alpha}\left(  i^{\prime}\right)  <v_{\alpha}\left(  j^{\prime
}\right)  \right)  \&\left(  i^{\prime}>j^{\prime}\right)  $ (where
$i^{\prime}=r_{\alpha}\left(  i\right)  ,j^{\prime}=r_{\alpha}\left(
j\right)  $). Thus
\[
\mathcal{E}\left(  \alpha,S\right)  =\prod\limits_{1\leq j^{\prime}<i^{\prime
}\leq N,v_{\alpha}\left(  i^{\prime}\right)  <v_{\alpha}\left(  j^{\prime
}\right)  }u\left(  q^{\lambda_{i^{\prime}}-\lambda_{j^{\prime}}}t^{c\left(
i^{\prime},S\right)  -c\left(  j^{\prime},S\right)  }\right)  .
\]

\begin{proposition}
Suppose $\alpha_{i}=\alpha_{i+1},j=r_{\alpha}\left(  i\right)  $ and
$m=c\left(  j,S\right)  -c\left(  j+1,S\right)  \geq2$ then $\left\Vert
M_{\alpha,S^{\left(  j\right)  }}\right\Vert ^{2}=\dfrac{\left(
1-t^{1-m}\right)  \left(  t-t^{-m}\right)  }{\left(  1-t^{-m}\right)  ^{2}%
}\left\Vert M_{\alpha,S}\right\Vert ^{2}$.
\end{proposition}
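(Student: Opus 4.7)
The plan is to unwind the definition $\|M_{\alpha,S}\|^2 = \mathcal{E}(\alpha,S)^{-1}\|M_{\alpha^+,S}\|^2$ and compute the ratio $\|M_{\alpha,S^{(j)}}\|^2/\|M_{\alpha,S}\|^2$ factor by factor, comparing $S$ with $S^{(j)}$ while $\alpha$ stays fixed. The crucial structural observation is that $\alpha_i = \alpha_{i+1}$ forces $r_\alpha(i+1) = r_\alpha(i)+1 = j+1$, so the partition $\lambda := \alpha^+$ satisfies $\lambda_j = \lambda_{j+1}$; this equality is what makes the combinatorial cancellations go through. The proof reduces to two separate claims: (a) $\mathcal{E}(\alpha,S) = \mathcal{E}(\alpha,S^{(j)})$, and (b) $\|M_{\lambda,S^{(j)}}\|^2/\|M_{\lambda,S}\|^2 = \|S^{(j)}\|_0^2/\|S\|_0^2$. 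Proposition \ref{Sform} then identifies the ratio in (b) with $u(t^{c(j,S)-c(j+1,S)}) = u(t^m)$, and by $u(z) = u(z^{-1})$ this equals $(1-t^{1-m})(t-t^{-m})/(1-t^{-m})^2$, which is the required constant.

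For (a), replacing $S$ by $S^{(j)}$ swaps the content values $c(j,\cdot)$ and $c(j+1,\cdot)$, and these enter $\mathcal{E}(\alpha,S)$ only through the ``slots'' $k = i$ and $k = i+1$ (via the rank permutation $r_\alpha$). I would then show that the transposition $\sigma = (i,i+1)$ acting on index labels is a bijection of the index set $\{(k,l):k<l,\ \alpha_k<\alpha_l\}$ onto itself: since $\alpha_i = \alpha_{i+1}$, every value $\alpha_{\sigma(k)}$ equals $\alpha_k$, and the only pair whose order $\sigma$ could reverse is $\{i,i+1\}$ itself, which is excluded from the product by the strict inequality. A relabeling then matches factors one-to-one between $\mathcal{E}(\alpha,S^{(j)})$ and $\mathcal{E}(\alpha,S)$.

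For (b), I would inspect each factor of (\ref{normdef2}) in turn. The prefactor $t^{k(\lambda)}(1-q)^{-|\lambda|}$ does not depend on $S$. In $\prod_{i=1}^N (qt^{c(i,S)};q)_{\lambda_i}$ only the $i = j$ and $i = j+1$ terms are affected, and $\lambda_j = \lambda_{j+1}$ turns the content swap into a trivial reordering of two equal factors. In the double product $\Pi_\lambda$ the $(j,j+1)$ term is $1$ because $\lambda_j - \lambda_{j+1} = 0$, and for any other index $p$ the two factors $(p,j),(p,j+1)$ (or $(j,p),(j+1,p)$) are again permuted among themselves for the same reason. Thus the only genuine change is the $\|S\|_0^2$ factor, and (b) follows.

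Combining (a) and (b) with the definition gives $\|M_{\alpha,S^{(j)}}\|^2 = \mathcal{E}(\alpha,S)^{-1} u(t^m)\|M_{\lambda,S}\|^2 = u(t^m)\|M_{\alpha,S}\|^2$, and rewriting $u(t^m) = u(t^{-m})$ puts the factor in the stated form. The main bookkeeping hurdle is (a): although the answer is clean, one must keep straight what $\mathcal{E}$ actually depends on---namely contents seen through the permutation $r_\alpha$---and exploit that the excluded diagonal pair $(i,i+1)$ is precisely the one that would otherwise spoil the bijection. Everything else is a careful but routine matching of Pochhammer symbols.
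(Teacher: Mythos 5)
Your proposal is correct and follows essentially the same route as the paper: it splits the claim into the invariance $\mathcal{E}(\alpha,S)=\mathcal{E}(\alpha,S^{(j)})$ and the partition-case identity $\left\Vert M_{\lambda,S^{(j)}}\right\Vert ^{2}/\left\Vert M_{\lambda,S}\right\Vert ^{2}=\left\Vert S^{(j)}\right\Vert _{0}^{2}/\left\Vert S\right\Vert _{0}^{2}$, both resting on $\lambda_{j}=\lambda_{j+1}$, and then invokes Proposition \ref{Sform} together with $u(z)=u(z^{-1})$. The only difference is that you spell out the index-set bijection for $\mathcal{E}$ more explicitly than the paper's parenthetical remark.
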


\begin{proof}
By hypothesis $\zeta_{\alpha,S}\left(  i\right)  =q^{\alpha_{i}}t^{c\left(
j,S\right)  }$ and $\zeta_{\alpha,S}\left(  i+1\right)  =q^{\alpha_{i}%
}t^{c\left(  j+1,S\right)  }$ so that $\zeta_{\alpha,S}\left(  i+1\right)
/\zeta_{\alpha,S}\left(  i\right)  =t^{-m}$. Also by Proposition \ref{Sform}
$\left\Vert S^{\left(  j\right)  }\right\Vert _{0}^{2}=u\left(  t^{-m}\right)
\left\Vert S\right\Vert _{0}^{2}$. Suppose first that $\alpha\in\mathbb{N}%
_{0}^{N,+}$ then $j=i$. In the formula for $\left\Vert M_{\alpha,S}\right\Vert
^{2}$ the first product does not change when $S$ is replaced by $S^{\left(
j\right)  }$; the factors $\left(  qt^{c\left(  i,S\right)  };q\right)
_{\lambda_{i}}$,$\left(  qt^{c\left(  i+1,S\right)  };q\right)  _{\lambda_{i}%
}$ trade places. By a similar argument the $\left(  i,j\right)  $-product also
does not change, and $\left\Vert M_{\alpha,S^{\left(  j\right)  }}\right\Vert
^{2}/\left\Vert S^{\left(  j\right)  }\right\Vert _{0}^{2}=\left\Vert
M_{\alpha,S}\right\Vert ^{2}/\left\Vert S\right\Vert _{0}^{2}$. Otherwise
$\alpha\neq\alpha^{+}$ and%
\[
\frac{\left\Vert M_{\alpha,S^{\left(  j\right)  }}\right\Vert ^{2}}{\left\Vert
S^{\left(  j\right)  }\right\Vert _{0}^{2}}=\frac{\left\Vert M_{\alpha
^{+},S^{\left(  j\right)  }}\right\Vert ^{2}}{\mathcal{E}\left(
\alpha,S^{\left(  j\right)  }\right)  \left\Vert S^{\left(  j\right)
}\right\Vert _{0}^{2}}=\frac{\left\Vert M_{\alpha^{+},S}\right\Vert ^{2}%
}{\mathcal{E}\left(  \alpha,S^{\left(  j\right)  }\right)  \left\Vert
S\right\Vert _{0}^{2}}=\frac{\mathcal{E}\left(  \alpha,S\right)  }%
{\mathcal{E}\left(  \alpha,S^{\left(  j\right)  }\right)  }\frac{\left\Vert
M_{\alpha,S}\right\Vert ^{2}}{\left\Vert S\right\Vert _{0}^{2}}.
\]
Recall $\mathcal{E}\left(  \alpha,S\right)  =\prod\limits_{1\leq l<n\leq
N,\alpha_{l}<\alpha_{n}}u\left(  q^{\alpha_{n}-\alpha_{l}}t^{c\left(
r_{\alpha}\left(  n\right)  ,S\right)  -c\left(  r_{\alpha}\left(  l\right)
,S\right)  }\right)  $ and the product does not change when $S$ is replaced by
$S^{\left(  j\right)  }$ (the factors involving $l=i$ or $n=i$ are
interchanged with those involving $l=i+1$ or $n=i+1$). Thus $\mathcal{E}%
\left(  \alpha,S^{\left(  j\right)  }\right)  =\mathcal{E}\left(
\alpha,S\right)  $.
\end{proof}

\begin{proposition}
Suppose $\alpha\in\mathbb{N}_{0}^{N},S\in\mathcal{Y}\left(  \tau\right)  $
then%
\[
\left\Vert M_{\alpha\Phi,S}\right\Vert ^{2}=\dfrac{1-q^{\alpha_{1}%
+1}t^{c\left(  r_{\alpha}\left(  1\right)  ,S\right)  }}{1-q}\left\Vert
M_{\alpha,S}\right\Vert ^{2}.
\]

\end{proposition}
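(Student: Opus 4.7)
I would verify the identity directly from the explicit formula (\ref{normdef2}) combined with the rule $\|M_{\alpha,S}\|^2 = \mathcal{E}(\alpha,S)^{-1}\|M_{\alpha^+,S}\|^2$. Write $\lambda := \alpha^+$ and $m := r_\alpha(1)$. The definition of $r_\alpha$ gives $\lambda_m = \alpha_1$ and, when $m>1$, $\lambda_{m-1} > \alpha_1$; comparing the multisets of entries then shows that $(\alpha\Phi)^+$ is the partition $\mu$ obtained from $\lambda$ by replacing $\lambda_m$ with $\lambda_m+1$ (still nonincreasing by the above inequality). The proposition therefore reduces to
$$
\frac{\|M_{\mu,S}\|^2}{\|M_{\lambda,S}\|^2} \;=\; \frac{1-q^{\lambda_m+1}\,t^{c(m,S)}}{1-q}\;\cdot\;\frac{\mathcal{E}(\alpha\Phi,S)}{\mathcal{E}(\alpha,S)}.
$$

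The left-hand side I would compute from (\ref{normdef2}) using the reformulated $(i,j)$-product $\Pi_\lambda = \prod_{i<j} t^{\lambda_j-\lambda_i}\prod_{l=1}^{\lambda_i-\lambda_j} u(q^l t^{c(i,S)-c(j,S)})$ supplied in the excerpt. Under $\lambda\mapsto\mu$ the contributions decompose cleanly: (a) $t^{k(\lambda)}$ picks up $t^{N-2m+1}$; (b) $(1-q)^{-|\lambda|}$ picks up $(1-q)^{-1}$; (c) only the $i=m$ factor of $\prod_i (qt^{c(i,S)};q)_{\lambda_i}$ changes, producing $1-q^{\lambda_m+1}t^{c(m,S)}$; (d) in $\Pi_\lambda$, each pair $(m,j)$ with $j>m$ acquires the new $u$-factor $u(q^{\lambda_m+1-\lambda_j}t^{c(m,S)-c(j,S)})$ and loses a $t$ from the prefactor, while each pair $(i,m)$ with $i<m$ drops the $u$-factor $u(q^{\lambda_i-\lambda_m}t^{c(i,S)-c(m,S)})$ and gains a $t$. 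The aggregate $t$-power from (d) is $t^{(m-1)-(N-m)} = t^{2m-1-N}$, which exactly cancels $t^{N-2m+1}$ from (a). Applying $u(z^{-1})=u(z)$ to the denominator converts the remaining pieces into a clean $u$-quotient.

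For the $\mathcal{E}$-ratio I would use the rank-indexed reformulation from the excerpt, $\mathcal{E}(\alpha,S) = \prod_{j'<i',\,v_\alpha(i')<v_\alpha(j')} u(q^{\lambda_{i'}-\lambda_{j'}}t^{c(i',S)-c(j',S)})$ with $v_\alpha:=r_\alpha^{-1}$. The shift rules $r_{\alpha\Phi}(i)=r_\alpha(i+1)$ for $i<N$ and $r_{\alpha\Phi}(N)=m$ give $v_{\alpha\Phi}(k)=v_\alpha(k)-1$ for $k\neq m$ and $v_{\alpha\Phi}(m)=N$. Pairs involving neither index $m$ contribute identically to both products. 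The pairs involving $m$ that survive are $(j'<m,\,i'=m)$ for $\mathcal{E}(\alpha,S)$ (since $v_\alpha(m)=1$) and $(j'=m,\,i'>m)$ for $\mathcal{E}(\alpha\Phi,S)$ (since $v_{\alpha\Phi}(m)=N$). Substituting $\mu_m = \lambda_m+1$ and using $u(z^{-1})=u(z)$ yields exactly the $u$-quotient produced by step (d), completing the verification. The main obstacle is the $t$-power bookkeeping in (d)—rewriting ratios of $q$-Pochhammer triples $(qt^{c(i,S)-c(j,S)\pm 1};q)_*/(qt^{c(i,S)-c(j,S)};q)_*^2$ as $t^{\mp 1}u(\cdot)$ via $t^{-1}u(z)=(1-tz)(1-z/t)/(1-z)^2$, and confirming the precise cancellation with the contribution of (a); once this is done, matching with the $\mathcal{E}$-ratio is purely symbolic.
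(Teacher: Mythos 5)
Your proposal is correct and follows essentially the same route as the paper: reduce to the partition case via the ratio $\mathcal{E}(\alpha,S)/\mathcal{E}(\alpha\Phi,S)$, compute $\Vert M_{(\alpha\Phi)^{+},S}\Vert^{2}/\Vert M_{\alpha^{+},S}\Vert^{2}$ from the explicit norm formula, and match the two $u$-quotients using the rank-indexed reformulation with $v_{\alpha}=r_{\alpha}^{-1}$. The only difference is cosmetic: you spell out the $t$-power bookkeeping (the cancellation of $t^{N-2m+1}$ against $t^{2m-1-N}$) in more detail than the paper does.
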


\begin{proof}
We need to compute various ratios of $\mathcal{E}\left(  \alpha,S\right)
,\left\Vert M_{\alpha^{+},S}\right\Vert ^{2},\mathcal{E}\left(  \alpha
\Phi,S\right)  ,\left\Vert M_{\left(  \alpha\Phi\right)  ^{+},S}\right\Vert
^{2}$. Also $r_{\alpha}\left(  i+1\right)  =r_{\alpha\Phi}\left(  i\right)  $
for $1\leq i<N,r_{\alpha}\left(  1\right)  =r_{\alpha\Phi}\left(  N\right)  $.
Let $\lambda:=\alpha^{+}$, then $\lambda_{r_{\alpha}\left(  i\right)  }%
=\alpha_{i}$ for all $i$. Let $m:=r_{\alpha}\left(  1\right)  $. This implies
$\#\left\{  i:\alpha_{i}>\alpha_{1}\right\}  =m-1$, thus $\lambda
_{m-1}>\lambda_{m}$ and $\left(  \alpha\Phi\right)  _{m}^{+}=\lambda_{m}+1$.
Also $k\left(  \left(  \alpha\Phi\right)  _{m}^{+}\right)  =k\left(
\lambda\right)  =N-2m+1$. This implies%
\begin{align*}
\frac{\left\Vert M_{\left(  \alpha\Phi\right)  ^{+},S}\right\Vert ^{2}%
}{\left\Vert M_{\lambda,S}\right\Vert ^{2}}  &  =t^{N-2m+1}\frac
{1-q^{\lambda_{m}+1}t^{c\left(  m,S\right)  }}{1-q}t^{m-1}\prod\limits_{i=1}%
^{m-1}u\left(  q^{\lambda_{i}-\lambda_{m}}t^{c\left(  i,S\right)  -c\left(
m,S\right)  }\right)  ^{-1}\\
&  \times t^{m-N}\prod\limits_{j=m+1}^{N}u\left(  q^{\lambda_{m}+1-\lambda
_{i}}t^{c\left(  m,S\right)  -c\left(  j,S\right)  }\right)  .
\end{align*}
Let $\mu:=\left(  \alpha\Phi\right)  ^{+}$, then $\mathcal{E}\left(
\alpha\Phi,S\right)  =\prod\limits_{i<j,v_{\Phi}\left(  i\right)
>v_{\alpha\Phi}\left(  j\right)  }u\left(  q^{\mu_{i}-\mu_{j}}t^{c\left(
i,S\right)  -c\left(  j,S\right)  }\right)  $ and $\mathcal{E}\left(
\alpha,S\right)  =\prod\limits_{i<j,v_{\alpha}\left(  i\right)  >v_{\alpha
}\left(  j\right)  }u\left(  q^{\lambda_{i}-\lambda_{j}}t^{c\left(
i,S\right)  -c\left(  j,S\right)  }\right)  .$ From $v_{\alpha}\left(
i\right)  =v_{\alpha\Phi}\left(  i\right)  +1$ except $v_{\alpha}\left(
m\right)  =1,v_{\alpha\Phi}\left(  m\right)  =N$ the inversions $\left(
i<j\right)  \&\left(  v_{\alpha}\left(  i\right)  >v_{\alpha}\left(  j\right)
\right)  $ occur in both the products provided that $j\neq m$ in which case
the pairs $\left\{  \left(  i,m\right)  :1\leq i\leq m-1\right\}  $ do not
occur in $\mathcal{E}\left(  \alpha\Phi,S\right)  $, or if $i=m$ and the pairs
$\left\{  \left(  m,j\right)  :m<j\leq N\right\}  $ do not occur in
$\mathcal{E}\left(  \alpha,S\right)  $. Also $\mu_{i}=\lambda_{i}$ for all $i$
except $\mu_{m}=\lambda_{m}+1$. Thus%
\[
\frac{\mathcal{E}\left(  \alpha,S\right)  }{\mathcal{E}\left(  \alpha
\Phi,S\right)  }=\prod\limits_{i=1}^{m-1}u\left(  q^{\lambda_{i}-\lambda_{m}%
}t^{c\left(  i,S\right)  -c\left(  m,S\right)  }\right)  \prod\limits_{j=m+1}%
^{N}u\left(  q^{\lambda_{m}+1-\lambda_{j}}t^{c\left(  m,S\right)  -c\left(
j,S\right)  }\right)  ^{-1},
\]
and%
\[
\frac{\left\Vert M_{\alpha\Phi,S}\right\Vert ^{2}}{\left\Vert M_{\alpha
,S}\right\Vert ^{2}}=\frac{\left\Vert M_{\left(  \alpha\Phi\right)  ^{+}%
,S}\right\Vert ^{2}}{\left\Vert M_{\lambda,S}\right\Vert ^{2}}\frac
{\mathcal{E}\left(  \alpha,S\right)  }{\mathcal{E}\left(  \alpha\Phi,S\right)
}=\frac{1-q^{\lambda_{m}+1}t^{c\left(  m,S\right)  }}{1-q}.
\]
Finally $\zeta_{\alpha,S}\left(  1\right)  =q^{\alpha_{1}}t^{c\left(
r_{\alpha}\left(  1\right)  ,S\right)  }=q^{\lambda_{m}}t^{c\left(
m,S\right)  }$.
\end{proof}

\begin{corollary}
The bilinear form satisfies (\ref{BF2.4}).
\end{corollary}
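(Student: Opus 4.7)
The plan is to use bilinearity of $\langle\cdot,\cdot\rangle$ and the orthogonality of the Macdonald basis to reduce (\ref{BF2.4}) to a single pair of basis elements, then invoke Lemma~\ref{Mdiff2} together with the immediately preceding Proposition. Since both sides vanish unless $\deg f = \deg g + 1$, and since $\boldsymbol{w}^{\ast}$ is an invertible degree-preserving operator on each $\mathcal{P}_{n}\otimes V_\tau$, the set $\{M_{\beta,S'}(\boldsymbol{w}^{\ast})^{-1}\}$ is a basis of the relevant homogeneous component. Thus it suffices to check (\ref{BF2.4}) for $f = M_{\gamma,S}$ and $g = M_{\beta,S'}(\boldsymbol{w}^{\ast})^{-1}$ with $\left\vert\gamma\right\vert = \left\vert\beta\right\vert + 1$, so that $g\boldsymbol{w}^{\ast} = M_{\beta,S'}$.

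For this choice, $x_{N}(g\boldsymbol{w}^{\ast}\boldsymbol{w}) = x_{N}M_{\beta,S'}\boldsymbol{w} = M_{\beta\Phi,S'}$ by the affine step, so the right-hand side equals $(1-q)\langle M_{\gamma,S}, M_{\beta\Phi,S'}\rangle$. By orthogonality of the Macdonald basis this vanishes unless $(\gamma,S) = (\beta\Phi,S')$, in which case it is $(1-q)\left\Vert M_{\beta\Phi,S}\right\Vert^{2}$.

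For the left-hand side, consider two subcases. If $\gamma_{N}=0$, then $r_{\gamma}(N)=N$; since the maximal entry $N$ of any RSYT sits at position $(1,1)$, one has $c(N,S)=0$, hence $\zeta_{\gamma,S}(N)=1$ and $M_{\gamma,S}\mathcal{D}_{N}=0$. Moreover $\gamma\neq\beta\Phi$ for any $\beta$ (the last entry of $\beta\Phi$ is $\beta_{1}+1\geq 1$), so the right-hand side also vanishes. If instead $\gamma_{N}\geq 1$, write $\gamma=\alpha\Phi$ with $\alpha:=(\gamma_{N}-1,\gamma_{1},\ldots,\gamma_{N-1})$; Lemma~\ref{Mdiff2} yields $M_{\gamma,S}\mathcal{D}_{N} = (1-q\zeta_{\alpha,S}(1))M_{\alpha,S}\boldsymbol{w}$. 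Using the adjoint identity $\langle f\boldsymbol{w},h\rangle = \langle f,h\boldsymbol{w}^{\ast}\rangle$, which follows from (\ref{BF2.2}) and (\ref{BF2.3}) exactly as in the scalar derivation earlier in the paper, the left-hand side becomes $(1-q\zeta_{\alpha,S}(1))\langle M_{\alpha,S}, M_{\beta,S'}\rangle$. By orthogonality this is zero unless $(\alpha,S) = (\beta,S')$, equivalently $(\gamma,S)=(\beta\Phi,S')$, and in the matching case equals $(1-q\zeta_{\beta,S}(1))\left\Vert M_{\beta,S}\right\Vert^{2}$.

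Consequently both sides vanish together, and when $(\gamma,S)=(\beta\Phi,S')$ the desired identity reduces to $(1-q)\left\Vert M_{\beta\Phi,S}\right\Vert^{2} = (1-q\zeta_{\beta,S}(1))\left\Vert M_{\beta,S}\right\Vert^{2}$, which is precisely the preceding Proposition (already verified directly from the explicit formula (\ref{normdef2})). The one place that deserves a little care is the reduction in the first step: one must verify that $\boldsymbol{w}^{\ast}$ is invertible on each $\mathcal{P}_{n}\otimes V_\tau$ so that replacing $g$ by $M_{\beta,S'}(\boldsymbol{w}^{\ast})^{-1}$ sweeps out a full basis. This is routine, because $\boldsymbol{w}$ acts by a cyclic permutation of the variables together with the substitution $x_{N}\mapsto qx_{N}$ and the tensor factor $\tau(\omega)$, all degree-preserving bijections, so $\boldsymbol{w}$ and hence its conjugate $\boldsymbol{w}^{\ast}$ are invertible.
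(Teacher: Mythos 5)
Your proposal is correct and follows essentially the same route as the paper: reduce to basis pairs $f=M_{\gamma,S}$, $g\boldsymbol{w}^{\ast}=M_{\beta,S'}$, split on $\gamma_N=0$ versus $\gamma=\alpha\Phi$, and invoke Lemma~\ref{Mdiff2} together with the preceding Proposition for the matching case. Your explicit remarks on the invertibility of $\boldsymbol{w}^{\ast}$ and on $c(N,S)=0$ are details the paper leaves implicit, but they do not change the argument.
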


\begin{proof}
By Lemma \ref{Mdiff2}
\begin{align*}
\left\langle M_{\alpha\Phi,S}\mathcal{D}_{N},M_{\alpha,S}\left(
\boldsymbol{w}^{\ast}\right)  ^{-1}\right\rangle  &  =\left(  1-q\zeta
_{\alpha,S}\left(  1\right)  \right)  \left\langle M_{\alpha,S}\boldsymbol{w,}%
M_{\alpha,S}\left(  \boldsymbol{w}^{\ast}\right)  ^{-1}\right\rangle \\
&  =\left(  1-q\zeta_{\alpha,S}\left(  1\right)  \right)  \left\Vert
M_{\alpha,S}\right\Vert ^{2},\\
\left\langle M_{\alpha\Phi,S},x_{N}\left(  M_{\alpha,S}\left(  \boldsymbol{w}%
^{\ast}\right)  ^{-1}\right)  \boldsymbol{w}^{\ast}\boldsymbol{w}%
\right\rangle  &  =\left\langle M_{\alpha\Phi,S},M_{\alpha\Phi,S}\right\rangle
=\dfrac{1-q\zeta_{\alpha,S}\left(  1\right)  }{1-q}\left\Vert M_{\alpha
,S}\right\Vert ^{2}%
\end{align*}
by the Proposition, thus $\left(  1-q\right)  \left\langle M_{\alpha\Phi
,S},x_{N}g\boldsymbol{w}^{\ast}\boldsymbol{w}\right\rangle =\left\langle
M_{\alpha\Phi,S}\mathcal{D}_{N},g\right\rangle $ when $g=M_{\alpha,S}\left(
\boldsymbol{w}^{\ast}\right)  ^{-1}$. It suffices to prove $\left\langle
f\mathcal{D}_{N},g\right\rangle =\left(  1-q\right)  \left\langle
f,x_{N}\left(  g\boldsymbol{w}^{\ast}\boldsymbol{w}\right)  \right\rangle $
for $f=M_{\gamma,S}$ and $g\boldsymbol{w}^{\ast}=M_{\beta,S^{\prime}}$ with
$\left\vert \gamma\right\vert =\left\vert \beta\right\vert +1$. If $\gamma
_{N}=0$ then $M_{\gamma,S}\mathcal{D}_{N}=0$ and $\left\langle M_{\gamma
,S}\mathcal{D}_{N},M_{\beta.S^{\prime}}\left(  \boldsymbol{w}^{\ast}\right)
^{-1}\right\rangle =0$ while $\left\langle M_{\gamma,S},x_{N}\left(
M_{\beta,S^{\prime}}\boldsymbol{w}\right)  \right\rangle =\left\langle
M_{\gamma,S},M_{\beta\Phi,S^{\prime}}\right\rangle =0$ because $\gamma
\neq\beta\Phi$. If $\gamma=\alpha\Phi$ for some $\alpha$ with $\left(
\alpha,S\right)  \neq\left(  \beta,S^{\prime}\right)  $ then $\left\langle
M_{\alpha\Phi,S},x_{N}\left(  M_{\beta,S^{\prime}}\boldsymbol{w}\right)
\right\rangle =\left\langle M_{\alpha\Phi,S},M_{\beta\Phi,S^{\prime}%
}\right\rangle =0$ and $\left\langle M_{\alpha\Phi,S}\mathcal{D}_{N}%
,M_{\beta.S^{\prime}}\left(  \boldsymbol{w}^{\ast}\right)  ^{-1}\right\rangle
=\allowbreak\left(  1-q\zeta_{\alpha}\left(  1\right)  \right)  \left\langle
M_{\alpha,S}\boldsymbol{w,}M_{\beta.S^{\prime}}\left(  \boldsymbol{w}^{\ast
}\right)  ^{-1}\right\rangle =\left(  1-q\zeta_{\alpha}\left(  1\right)
\right)  \left\langle M_{\alpha,S}\boldsymbol{,}M_{\beta.S^{\prime}%
}\right\rangle =0$. The case $\left(  \alpha,S\right)  =\left(  \beta
,S^{\prime}\right)  $ is already done.
\end{proof}

\begin{proposition}
\label{regpos2}Suppose $\dim V_{\tau}\geq2$, $q,t>0$ and $q\neq1$ then the
form $\left\langle \cdot,\cdot\right\rangle $ is positive-definite provided
$0<q<\min\left(  t^{-h_{\tau}},t^{h_{\tau}}\right)  $ or $q>\max\left(
t^{-h_{\tau}},t^{h_{\tau}}\right)  $; equivalently $\min\left(  q^{-1/h_{\tau
}},q^{1/h_{\tau}}\right)  <t<\max\left(  q^{-1/h_{\tau}},q^{1/h_{\tau}%
}\right)  $.
\end{proposition}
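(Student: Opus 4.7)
The strategy is to read off positivity from the explicit formula~(\ref{normdef2}) combined with the defining relation $\left\Vert M_{\alpha,S}\right\Vert^{2}=\mathcal{E}(\alpha,S)^{-1}\left\Vert M_{\alpha^{+},S}\right\Vert^{2}$. It suffices to show (i) $\left\Vert M_{\lambda,S}\right\Vert^{2}>0$ for every partition $\lambda\in\mathbb{N}_{0}^{N,+}$ and $S\in\mathcal{Y}(\tau)$, and (ii) $\mathcal{E}(\alpha,S)>0$ for every $\alpha\in\mathbb{N}_{0}^{N}$ and $S$. In~(\ref{normdef2}) the prefactors $t^{k(\lambda)}$ and $\left\Vert S\right\Vert_{0}^{2}$ are positive (the latter was observed right after Proposition~\ref{Sform}), and the denominator $\prod_{i<j}(qt^{c(i,S)-c(j,S)};q)_{\lambda_{i}-\lambda_{j}}^{2}$ is a square; the remaining sign-bearing data are the single factor $(1-q)^{-|\lambda|}$ and the linear factors $(1-q^{a}t^{b})$ that make up the Pochhammer products.

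The key uniform bound is that every linear factor $(1-q^{a}t^{b})$ arising in~(\ref{normdef2}) has $a\geq 1$ and $|b|\leq h_{\tau}$, because content differences satisfy $|c(i,S)-c(j,S)|\leq\tau_{1}+\ell(\tau)-2=h_{\tau}-1$ and the Pochhammer arguments shift these by $0$ or $\pm 1$. The factors of $\mathcal{E}(\alpha,S)$ are $u(q^{a}t^{b})$ with $a\geq 1$ and $|b|\leq h_{\tau}-1$, and since $u(z)=(1-tz)(t-z)/(1-z)^{2}$ is positive precisely when $z\notin[\min(t,1/t),\max(t,1/t)]$, positivity of $\mathcal{E}$ reduces to locating the scalars $q^{a}t^{b}$ off that closed interval. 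In the low-$q$ regime $0<q<\min(t^{h_{\tau}},t^{-h_{\tau}})$ one checks the extremal case (namely $a=1$ with $b=-h_{\tau}$ when $t\leq 1$, or $b=+h_{\tau}$ when $t\geq 1$) to conclude $q^{a}t^{b}<\min(t,1/t)\leq 1$ for all admissible $(a,b)$: every $(1-q^{a}t^{b})$ is then positive, every $u(q^{a}t^{b})$ is positive, and $(1-q)^{-|\lambda|}>0$, so~(\ref{normdef2}) is a product of positive quantities.

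In the high-$q$ regime $q>\max(t^{h_{\tau}},t^{-h_{\tau}})$ the same extremal analysis gives $q^{a}t^{b}>\max(t,1/t)\geq 1$ for all admissible $(a,b)$: every $(1-q^{a}t^{b})$ is negative, and each $u(q^{a}t^{b})$ is positive because both factors $(1-tq^{a}t^{b})$ and $(t-q^{a}t^{b})$ are negative. Positivity then becomes a parity count: the product $\prod_{i}(qt^{c(i,S)};q)_{\lambda_{i}}$ contributes $|\lambda|$ negative factors, while the two shifted products contribute $2\sum_{i<j}(\lambda_{i}-\lambda_{j})$ more (an even number), so the net sign of the linear factors is $(-1)^{|\lambda|}$, which cancels the sign $(-1)^{|\lambda|}$ of $(1-q)^{-|\lambda|}=(-1)^{|\lambda|}(q-1)^{-|\lambda|}$. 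The main obstacle is exactly this parity bookkeeping in the large-$q$ case, together with verifying that the strict defining inequalities keep every $(1-q^{a}t^{b})$ at the boundary $|b|=h_{\tau}$ strictly away from zero so that no factor of $u$ vanishes.
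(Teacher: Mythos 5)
Your argument is essentially the paper's own proof: the paper likewise observes that every factor $1-q^{a}t^{b}$ occurring in the norm formula has $a\geq1$ and $\left\vert b\right\vert \leq h_{\tau}$, that the total number of such factors is even, and that under the stated inequalities they are all positive (when $q<1$) or all negative (when $q>1$); your explicit $\left(  -1\right)  ^{\left\vert \lambda\right\vert }$ parity count and the sign criterion for $u$ merely spell out that bookkeeping. One minor slip: for the linear factors with $\left\vert b\right\vert =h_{\tau}$ in the low-$q$ regime you can only conclude $q^{a}t^{b}<1$, not $q^{a}t^{b}<\min\left(  t,t^{-1}\right)  $, but $q^{a}t^{b}<1$ is all that is needed for those factors, so the conclusion stands.
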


\begin{proof}
In the definition of $\left\langle M_{\alpha,S},M_{\alpha,S}\right\rangle $
there is an even number of factors of the form $1-q^{a}t^{b}$ where
$a=1,2,3,\ldots$ and $b$ is one of $c\left(  i,S\right)  $, $c\left(
i,S\right)  -c\left(  j,S\right)  $, or $c\left(  i,S\right)  -c\left(
j,S\right)  \pm1$. The $c\left(  i,S\right)  $ values lie in $\left[
1-\ell\left(  \tau\right)  ,\tau_{1}-1\right]  $; thus $-h_{\tau}\leq b\leq
h_{\tau}$ where $h_{\tau}=\tau_{1}+\ell\left(  \tau\right)  -1$, the maximum
hook length in the Ferrers diagram $\lambda$. Consider the four cases (1)
$0<q<1,0<t<1$, then $q^{a}t^{b}\leq qt^{-h_{\tau}}<1$ provided $q<t^{h_{\tau}%
}$; (2) $0<q<1,t\geq1$, then $q^{a}t^{b}\leq qt^{h_{\tau}}<1$ provided
$q<t^{-h_{\tau}}$; (3) $q>1,0<t<1,$ then $q^{a}t^{b}\geq qt^{h_{\tau}}>1$
provided $q>t^{-h_{\tau}}$; (4) $q>1,t\geq1$, then $q^{q}t^{b}\geq
qt^{-h_{\gamma}}>1$ provided $q>t^{h_{\tau}}$. Thus $\left\Vert M_{\alpha
,S}\right\Vert ^{2}>0$ if $\min\left(  q^{-1/h_{\tau}},q^{1/h_{\tau}}\right)
<t<\max\left(  q^{-1/h_{\tau}},q^{1/h_{\tau}}\right)  $.
\end{proof}

There is an illustration in Figure \ref{posit2} with $h_{\tau}=3$ (for
$\tau=\left(  2,1\right)  $ or $\tau=\left(  2,2\right)  $).

From a similar argument it follows that the transformation formulas for
Macdonald polynomials have no poles when $\min\left(  q^{-1/k},q^{1/k}\right)
<t<\max\left(  q^{-1/k},q^{1/k}\right)  $ with $k=h_{\tau}-1$.%

\begin{figure}
[ptb]
\begin{center}
\includegraphics[
height=3.1514in,
width=3.1514in
]%
{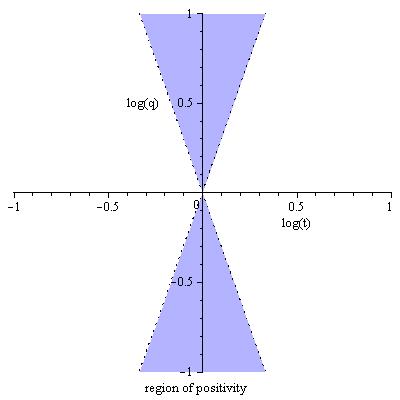}%
\caption{Logarithmic coordinates, h = 3}%
\label{posit2}%
\end{center}
\end{figure}

\subsection{Singular polynomials}

A singular polynomial $f\in\mathcal{P}_{\tau}$ is one which satisfies
$f\mathcal{D}_{i}=0$ for all $i$ when $\left(  q,t\right)  $ are specialized
to some specific relation of the form $q^{a}t^{b}=1$. By Proposition
\ref{fDxg} $f$ satisfies $\left\langle f,g\right\rangle =0$ for all
$g\in\mathcal{P}_{\tau}$, and in particular $\left\langle f,f\right\rangle
=0$. Thus the singular polynomial phenomenon can not occur in the $\left(
q,t\right)  $-region of positivity. The boundary of the region does allow
singular polynomials. There are Macdonald polynomials which are singular when
specialized to $q=t^{h_{\tau}}$ or $q=t^{-h_{\tau}}$. These values do not
produce poles in the polynomial coefficients as remarked above, since
$\frac{1}{h_{\tau}}<\frac{1}{h_{\tau}-1}$.

\begin{proposition}
\label{zerodiff}Suppose $\alpha\in\mathbb{N}_{0}^{N},S\in\mathcal{Y}\left(
\tau\right)  $ and $\alpha_{i}=0$ for $m<i\leq N$ then $M_{\alpha
,S}\mathcal{D}_{j}=0$ for $m<j\leq N$.
\end{proposition}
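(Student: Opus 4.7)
The plan is to argue by downward induction on $j$, using the recursion $\mathcal{D}_{j}=\frac{1}{t}\boldsymbol{T}_{j}\mathcal{D}_{j+1}\boldsymbol{T}_{j}$ to reduce the index.

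For the base case $j=N$, I would first verify the combinatorial fact that the hypothesis $\alpha_{i}=0$ for $i>m$ forces $r_{\alpha}(i)=i$ for every $i>m$. Indeed, if $k:=\#\{l:\alpha_{l}>0\}$, then for $i>m$ one has $\#\{l:\alpha_{l}>\alpha_{i}\}=k$ and $\#\{l\leq i:\alpha_{l}=0\}=(m-k)+(i-m)=i-k$, summing to $i$. In particular $r_{\alpha}(N)=N$; since $N$ is the largest entry of $S$ it must occupy the top-left box, so $c(N,S)=0$, and therefore $\zeta_{\alpha,S}(N)=q^{0}t^{0}=1$. Hence $M_{\alpha,S}(I-\boldsymbol{\xi}_{N})=0$, and dividing by $x_{N}$ gives $M_{\alpha,S}\mathcal{D}_{N}=0$.

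For the inductive step, fix $m<j<N$ and assume $M_{\alpha,S'}\mathcal{D}_{j+1}=0$ for every $S'\in\mathcal{Y}(\tau)$. Since $\alpha_{j}=\alpha_{j+1}=0$ and $r_{\alpha}(j)=j$, $r_{\alpha}(j+1)=j+1$, I would invoke the four action formulas governing $M_{\alpha,S}\boldsymbol{T}_{j}$ in the equal-entries regime: the two trivial-eigenvalue cases $M_{\alpha,S}\boldsymbol{T}_{j}=tM_{\alpha,S}$ (when $\operatorname{row}(j,S)=\operatorname{row}(j+1,S)$) and $M_{\alpha,S}\boldsymbol{T}_{j}=-M_{\alpha,S}$ (when $\operatorname{col}(j,S)=\operatorname{col}(j+1,S)$), together with the swap formulas (\ref{MT=i}) and (\ref{MT=2}) which express $M_{\alpha,S}\boldsymbol{T}_{j}$ as a $\mathbb{K}$-linear combination of $M_{\alpha,S}$ and $M_{\alpha,S^{(j)}}$. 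The essential point is that the composition $\alpha$ is preserved in every case, so each summand still satisfies the hypothesis $\alpha_{i}=0$ for $i>m$. Applying $\mathcal{D}_{j+1}$ to each summand then yields $0$ by the inductive hypothesis, so $M_{\alpha,S}\boldsymbol{T}_{j}\mathcal{D}_{j+1}=0$, and consequently
\[
M_{\alpha,S}\mathcal{D}_{j}=\tfrac{1}{t}M_{\alpha,S}\boldsymbol{T}_{j}\mathcal{D}_{j+1}\boldsymbol{T}_{j}=0.
\]

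The main obstacle is really just bookkeeping rather than computation: one must confirm that each of the four action formulas for $\boldsymbol{T}_{j}$ when $\alpha_{j}=\alpha_{j+1}$ produces only Macdonald polynomials indexed by the \emph{same} composition $\alpha$ and by RSYT's of the same shape $\tau$. This is exactly what the Yang-Baxter graph edges (\ref{MT=i}), (\ref{MT=2}) and the two degenerate cases guarantee, so the inductive step propagates cleanly.
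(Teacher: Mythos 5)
Your proposal is correct and follows essentially the same route as the paper: downward induction on $j$ via $\mathcal{D}_{j}=\frac{1}{t}\boldsymbol{T}_{j}\mathcal{D}_{j+1}\boldsymbol{T}_{j}$, with the base case $j=N$ coming from $\zeta_{\alpha,S}(N)=1$ and the inductive step from the fact that in the equal-entries regime $M_{\alpha,S}\boldsymbol{T}_{j}$ is a $\mathbb{K}$-combination of $M_{\alpha,S}$ and $M_{\alpha,S^{(j)}}$, both indexed by the same composition $\alpha$. Your explicit verifications that $r_{\alpha}(i)=i$ for $i>m$ and that $c(N,S)=0$ are details the paper leaves implicit, but the argument is the same.
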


\begin{proof}
Arguing by induction the start is $M_{\alpha,S}\mathcal{D}_{N}=\left(
1/x_{N}\right)  M_{\alpha,S}\left(  1-\xi_{N}\right)  =\left(  1/x_{N}\right)
\left(  1-\zeta_{\alpha,S}\left(  N\right)  \right)  M_{\alpha,S}=0$, since
$\zeta_{\alpha,S}\left(  N\right)  =1$. Suppose now that $\beta_{i}=0$ for
$i\geq k+1$ implies $M_{\beta,S^{\prime}}\mathcal{D}_{j}=0$ for $j\geq k+1$
and any $\left(  \beta,S^{\prime}\right)  $. Suppose $\alpha_{i}=0$ for $i\geq
k$ then $r_{\alpha}\left(  i\right)  =i$ and $\zeta_{\alpha,S}\left(
i\right)  =t^{r\left(  i,S\right)  }$ for $i\geq k$ and $M_{\alpha
,S}\boldsymbol{T}_{k}$ is one of $tM_{\alpha,S}$, $-M_{\alpha,S}$,
$M_{\alpha,S^{\left(  k\right)  }}-\frac{t-1}{\rho-1}M_{\alpha,S}$,
$\frac{\left(  1-t\rho\right)  \left(  t-\rho\right)  }{\left(  1-\rho\right)
^{2}}M_{\alpha,S^{\left(  k\right)  }}-\frac{t-1}{\left(  1-\rho\right)
}M_{\alpha,S}$ depending on $c\left(  k+1,S\right)  -c\left(  k,S\right)  =1$,
$=-1$, $\geq2$, $\leq-2$ respectively and $\rho=t^{c\left(  k+1,S\right)
-c\left(  k,S\right)  }$. Then $M_{\alpha,S}\mathcal{D}_{k}=\frac{1}{t}\left(
M_{\alpha,S}\boldsymbol{T}_{k}\right)  \mathcal{D}_{k+1}\boldsymbol{T}_{k}$
and $\left(  M_{\alpha,S}\boldsymbol{T}_{k}\right)  \mathcal{D}_{k+1}=0$ by
the inductive hypothesis.
\end{proof}

\begin{lemma}
Suppose $\alpha=\left(  \alpha_{1},\ldots,\alpha_{m-1},1,0,\ldots\right)  $
with $\alpha_{i}\geq1$ for $i\leq m$ and $S\in\mathcal{Y}\left(  \tau\right)
$ then
\[
M_{\alpha,S}\mathcal{D}_{m}=t^{m-N}\prod\limits_{j=m}^{N-1}u\left(
qt^{c\left(  m,S\right)  -c\left(  j+1,S\right)  }\right)  M_{\beta^{\left(
N\right)  },S}\mathcal{D}_{N}\boldsymbol{T}_{N-1}\cdots\boldsymbol{T}_{m},
\]
where $\alpha^{\left(  N\right)  }=\left(  \alpha_{1},\ldots,\alpha
_{m-1},0,0,\ldots,1\right)  $.
\end{lemma}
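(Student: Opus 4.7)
The plan is to unfold the recursive definition $\mathcal{D}_i = \frac{1}{t}\boldsymbol{T}_i\mathcal{D}_{i+1}\boldsymbol{T}_i$ to obtain
\[
\mathcal{D}_m = t^{m-N}\,\boldsymbol{T}_m\boldsymbol{T}_{m+1}\cdots \boldsymbol{T}_{N-1}\,\mathcal{D}_N\,\boldsymbol{T}_{N-1}\cdots \boldsymbol{T}_m,
\]
apply this to $M_{\alpha,S}$ on the left, and then reduce the factor $M_{\alpha,S}\boldsymbol{T}_m\cdots\boldsymbol{T}_{N-1}\mathcal{D}_N$ to a scalar multiple of $M_{\alpha^{(N)},S}\mathcal{D}_N$ by using Proposition \ref{zerodiff} to annihilate all the unwanted remainder terms.

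For $k=0,1,\ldots,N-m$ introduce the intermediate compositions $\gamma_k := \alpha.s_m s_{m+1}\cdots s_{m+k-1}$, so that $\gamma_0=\alpha$, $\gamma_{N-m}=\alpha^{(N)}$, and in general $\gamma_k$ is obtained from $\alpha$ by moving the solitary $1$ from position $m$ to position $m+k$ while keeping every entry past position $m+k$ equal to $0$. Since every $\alpha_i$ with $i<m$ is $\geq 1$, a direct rank count gives $r_{\gamma_{k-1}}(m+k-1)=m$ (the position of the $1$) and $r_{\gamma_{k-1}}(m+k)=m+k$, hence
\[
\rho_k := \frac{\zeta_{\gamma_{k-1},S}(m+k-1)}{\zeta_{\gamma_{k-1},S}(m+k)} = qt^{c(m,S)-c(m+k,S)}.
\]
Because $(\gamma_{k-1})_{m+k-1}=1>0=(\gamma_{k-1})_{m+k}$, formula (\ref{MT<i}) applied with $i=m+k-1$ yields
\[
M_{\gamma_{k-1},S}\boldsymbol{T}_{m+k-1} = u(\rho_k)\,M_{\gamma_k,S} + \frac{\rho_k(1-t)}{1-\rho_k}\,M_{\gamma_{k-1},S}.
\]

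The critical observation is that in the remainder term, the composition $\gamma_{k-1}$ has all entries equal to $0$ from position $m+k$ onward, so the subsequent operators $\boldsymbol{T}_{m+k},\ldots,\boldsymbol{T}_{N-1}$ only act via the equal-entry cases in the definition of $\boldsymbol{T}_i$, producing a linear combination of terms $M_{\gamma_{k-1},S'}$ with the composition \emph{frozen} at $\gamma_{k-1}$. Each such term has vanishing last coordinate and is therefore killed by $\mathcal{D}_N$ by Proposition \ref{zerodiff}. Iterating the step $k=1,2,\ldots,N-m$, only the cascading main term survives, giving
\[
M_{\alpha,S}\boldsymbol{T}_m\boldsymbol{T}_{m+1}\cdots\boldsymbol{T}_{N-1}\mathcal{D}_N = \left(\prod_{k=1}^{N-m}u(\rho_k)\right)M_{\alpha^{(N)},S}\mathcal{D}_N.
\]

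Substituting this identity back into the unfolded expression for $\mathcal{D}_m$ and reindexing $j=m+k-1$ (so that $j$ runs over $m,\ldots,N-1$ and $\rho_k = qt^{c(m,S)-c(j+1,S)}$) yields exactly the claimed formula. The main obstacle is verifying the rank count that pins down $\rho_k$ and confirming that every remainder term is genuinely trapped in the kernel of $\mathcal{D}_N$; once these two points are in place, the cascade collapses to a single product of $u(\rho_k)$'s and the rest is bookkeeping.
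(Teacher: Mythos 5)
Your proposal is correct and follows essentially the same route as the paper: both unfold the recursion $\mathcal{D}_{i}=\frac{1}{t}\boldsymbol{T}_{i}\mathcal{D}_{i+1}\boldsymbol{T}_{i}$, use (\ref{MT<i}) with the same rank computation giving $\rho=qt^{c\left(  m,S\right)  -c\left(  j+1,S\right)  }$, and invoke Proposition \ref{zerodiff} to annihilate the remainder terms. The only organizational difference is that the paper kills each remainder immediately via $M_{\alpha^{\left(  j\right)  },S}\mathcal{D}_{j+1}=0$ at every step, whereas you defer all the killing to the final $\mathcal{D}_{N}$ and therefore need the extra (correct) observation that the equal-entry actions of $\boldsymbol{T}_{m+k},\ldots,\boldsymbol{T}_{N-1}$ leave the composition of each remainder term frozen.
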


\begin{proof}
For $m\leq j\leq N$ let $\alpha^{\left(  j\right)  }=\left(  \alpha_{1}%
,\ldots,\alpha_{m-1},0,\ldots,\overset{j}{1},0\ldots\right)  $ so that
$\alpha_{i}^{\left(  j\right)  }=\alpha_{i}$ except $\alpha_{j}^{\left(
j\right)  }=1$ and $\alpha_{m}^{\left(  j\right)  }=0$ (when $j\neq m$). Then
$\zeta_{\alpha^{\left(  j\right)  },S}\left(  j\right)  =qt^{c\left(
m,S\right)  }$ and $\zeta_{\alpha^{\left(  j\right)  },S}\left(  j+1\right)
=t^{c\left(  j+1,S\right)  }$ (since $r_{\alpha^{\left(  m\right)  }}\left(
j\right)  =m$) and
\[
M_{\alpha^{\left(  j\right)  },S}\boldsymbol{T}_{j}=\frac{\left(
1-t\rho\right)  \left(  t-\rho\right)  }{\left(  1-\rho\right)  ^{2}}%
M_{\alpha^{\left(  j+1\right)  },S}+\frac{\rho\left(  1-t\right)  }{\left(
1-\rho\right)  }M_{\alpha^{\left(  j\right)  },S}%
\]
from (\ref{MT<i}) with $\rho=qt^{c\left(  m,S\right)  -c\left(  j+1,S\right)
}$. Thus
\[
M_{\alpha^{\left(  j\right)  },S}\mathcal{D}_{j}=\frac{1}{t}M_{\alpha^{\left(
j\right)  },S}\boldsymbol{T}_{j}\mathcal{D}_{j+1}\boldsymbol{T}_{j}=\frac
{1}{t}u\left(  qt^{c\left(  m,S\right)  -c\left(  j+1,S\right)  }\right)
M_{\alpha^{\left(  j+1\right)  },S}\mathcal{D}_{j+1}\boldsymbol{T}_{j},
\]
because $M_{\alpha^{\left(  j\right)  },S}\mathcal{D}_{j+1}=0$. Iterate this
formula starting with $j=m$ and $\alpha^{\left(  m\right)  }=\alpha$, ending
with $j=N-1$ to obtain the stated formula.
\end{proof}

Recall that $S_{1}$ is the $\mathrm{inv}$-minimal RSYT with the numbers
$N,N-1,N-2,\ldots,1$ entered row-by-row and let $l=\ell\left(  \tau\right)  $,
$\alpha=\left(  1^{\tau_{l}},0^{N-\tau_{l}}\right)  $ thus the entry at
$\left(  l,1\right)  $ is $\tau_{l}$ and $c\left(  \tau_{l},S_{1}\right)
=1-l$. The entry at $\left(  1,\tau_{1}\right)  $ is $N-\tau_{1}+1$ and
$c\left(  N-\tau_{1}+1,S_{1}\right)  =\tau_{1}-1$.

\begin{proposition}
$M_{\alpha,S_{1}}$ is singular for $q=t^{h_{\tau}}$.
\end{proposition}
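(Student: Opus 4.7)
The plan is to verify $M_{\alpha,S_1}\mathcal{D}_j=0$ at $q=t^{h_\tau}$ for every $j=1,\ldots,N$ by splitting into three ranges relative to $m:=\tau_l$. For $j>m$, Proposition~\ref{zerodiff} applies directly because $\alpha_i=0$ for $i>m$, so the vanishing is identical in $(q,t)$. The substance of the argument is the case $j=m$, and the cases $j<m$ will follow from a short downward induction.

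For $j=m$, I would invoke the preceding lemma to write
\[
M_{\alpha,S_1}\mathcal{D}_m = t^{m-N}\prod_{j=m}^{N-1}u\bigl(qt^{c(m,S_1)-c(j+1,S_1)}\bigr)\,M_{\alpha^{(N)},S_1}\mathcal{D}_N\boldsymbol{T}_{N-1}\cdots\boldsymbol{T}_m,
\]
and exhibit a single vanishing factor in the $u$-product. Since $S_1$ fills the diagram row-by-row with $N,N-1,\ldots,1$, the entry $m=\tau_l$ occupies position $(\ell(\tau),1)$ with content $c(m,S_1)=1-\ell(\tau)$, while $N-\tau_1+1$ occupies $(1,\tau_1)$ with content $\tau_1-1$. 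Taking $j=N-\tau_1$, which lies in $\{m,\ldots,N-1\}$ because $N-\tau_1=\tau_2+\cdots+\tau_l\geq\tau_l=m$, and using $h_\tau=\tau_1+\ell(\tau)-1$, the argument of $u$ collapses at $q=t^{h_\tau}$ to
\[
t^{h_\tau+(1-\ell(\tau))-(\tau_1-1)}=t^{h_\tau+2-\tau_1-\ell(\tau)}=t,
\]
and $u(t)=0$. I would also check that no other factor blows up there: the relation $qt^b=1$ at $q=t^{h_\tau}$ would force the difference $c(j+1,S_1)-c(m,S_1)$ to equal $h_\tau$, which in turn would force a content equal to $\tau_1$, exceeding the maximum content $\tau_1-1$ available in $\tau$.

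Finally, for $1\leq j<m$, I would propagate the vanishing downward using $\mathcal{D}_j=\tfrac{1}{t}\boldsymbol{T}_j\mathcal{D}_{j+1}\boldsymbol{T}_j$. Since $\alpha_j=\alpha_{j+1}=1$, the rank values are $r_\alpha(j)=j$ and $r_\alpha(j+1)=j+1$, both of which sit in the bottom row $\ell(\tau)$ of $S_1$; case~(1) of Definition~\ref{deftauTi} then gives $M_{\alpha,S_1}\boldsymbol{T}_j=tM_{\alpha,S_1}$, so that
\[
M_{\alpha,S_1}\mathcal{D}_j = M_{\alpha,S_1}\mathcal{D}_{j+1}\boldsymbol{T}_j.
\]
A downward induction from the already-established case $j=m$ finishes the proof. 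The main obstacle is purely combinatorial: locating $N-\tau_1+1$ inside $S_1$ and verifying that the exponent of $t$ in the relevant factor of $u$ collapses to exactly $1$ at $q=t^{h_\tau}$; once this is in hand, the Yang--Baxter recursions for the $\boldsymbol{T}_i$ action do all of the remaining work.
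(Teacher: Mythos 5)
Your proposal is correct and follows essentially the same route as the paper: apply the preceding lemma to reduce $M_{\alpha,S_1}\mathcal{D}_{\tau_l}$ to a $u$-product, locate the vanishing factor at $j=N-\tau_1$ where the argument of $u$ becomes $qt^{1-h_\tau}=t$ at $q=t^{h_\tau}$ (while checking no denominator vanishes), invoke Proposition~\ref{zerodiff} for $j>\tau_l$, and propagate downward for $j<\tau_l$ via $M_{\alpha,S_1}\boldsymbol{T}_j=tM_{\alpha,S_1}$. The only cosmetic difference is that you run the $j<\tau_l$ step as a one-step-at-a-time induction where the paper writes out the iterated conjugation all at once.
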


\begin{proof}
By the lemma with $m=\tau_{l}$,
\[
M_{\alpha,S_{1}}\mathcal{D}_{\tau_{l}}=t^{\tau_{l}-N}\prod\limits_{j=\tau_{l}%
}^{N-1}u\left(  qt^{1-l-c\left(  j+1,S_{1}\right)  }\right)  M_{\alpha
^{\left(  N\right)  },S_{1}}\mathcal{D}_{N}\boldsymbol{T}_{N-1}\cdots
\boldsymbol{T}_{\tau_{l}}.
\]
The factors in the denominator of the product are of the form
$1-qt^{1-l-c\left(  j+1,S_{1}\right)  }$ with $c\left(  j+1,S_{1}\right)
\leq\tau_{1}-1$ so that $1-l-c\left(  j+1,S_{1}\right)  \geq2-l-\tau
_{1}=1-h_{\tau}>h_{\tau}$. Furthermore the numerator factor at $j=N-\tau_{1}$
is $\left(  t-qt^{2-l-\tau_{1}}\right)  \left(  1-qt^{3-l-\tau_{1}}\right)  $
which vanishes at $qt^{-h_{\tau}}=1$. By Proposition \ref{zerodiff}
$M_{\alpha,S_{1}}\mathcal{D}_{i}=0$ for $i>\tau_{l}$. If $1\leq i<\tau_{l}$
then $M_{\alpha,S_{1}}\boldsymbol{T}_{i}=tM_{\alpha,S_{1}}$ (because $i,i+1$
are in the same row of $S_{1}).$thus%
\begin{align*}
M_{\alpha,S_{1}}\mathcal{D}_{i}  &  =t^{i-\tau_{l}}M_{\alpha,S_{i}%
}\boldsymbol{T}_{i}\boldsymbol{T}_{i+1}\cdots\boldsymbol{T}_{\tau_{l}%
-1}\mathcal{D}_{\tau_{l}}\boldsymbol{T}_{\tau_{l}-1}\cdots\boldsymbol{T}_{i}\\
&  =M_{\alpha,S_{i}}\mathcal{D}_{\tau_{l}}\boldsymbol{T}_{\tau_{l}-1}%
\cdots\boldsymbol{T}_{i}=0
\end{align*}
when $q=t^{h_{\tau}}$.
\end{proof}

We apply the same argument to $S_{0}$ where the numbers $N,N-1,\ldots,1$ are
entered column-by-column. Let $m=\tau_{\tau_{1}}^{\prime}$, that is, the
length of the last column of $\tau$. Then the entry at $\left(  \tau
_{1},1\right)  $ is $m$ and $c\left(  m,S_{0}\right)  =\tau_{1}-1$. Also the
entry at $\left(  l,1\right)  $ is $N-l+1$ and $c\left(  N-l+1\right)  =1-l$.

\begin{proposition}
Set $\alpha=\left(  1^{m},0^{N-m}\right)  $ then $M_{\alpha,S_{0}}$ is
singular for $q=t^{-h_{\tau}}$.
\end{proposition}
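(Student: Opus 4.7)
The plan is to mirror the proof of the preceding proposition, with $S_0$ replacing $S_1$ and $q = t^{-h_\tau}$ replacing $q = t^{h_\tau}$; the roles of rows and columns in these two $\mathrm{inv}$-extremal tableaux (and hence the signs of the critical content differences) are interchanged. Three ranges of indices must be handled. For $i > m$ the equality $M_{\alpha, S_0}\mathcal{D}_i = 0$ is immediate from Proposition~\ref{zerodiff}, since $\alpha_j = 0$ for $j > m$; this needs no specialization of $(q,t)$.

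For $i = m$, I apply the preceding Lemma (its hypothesis holds because $\alpha_i = 1 \geq 1$ for $i \leq m$ and $\alpha_{m+1} = 0$) to obtain
\[
M_{\alpha, S_0}\mathcal{D}_m = t^{m-N}\prod_{j=m}^{N-1} u\bigl(qt^{c(m, S_0) - c(j+1, S_0)}\bigr)\, M_{\alpha^{(N)}, S_0}\mathcal{D}_N \boldsymbol{T}_{N-1}\cdots \boldsymbol{T}_m.
\]
In $S_0$ the entry $m$ sits at $(1, \tau_1)$, so $c(m, S_0) = \tau_1 - 1$ is the maximum content occurring in $\tau$. The minimum content $1 - l$ (with $l = \ell(\tau)$) is realized by the entry $N - l + 1$ at $(l, 1)$, and when $\tau_1 \geq 2$ one has $N \geq m\tau_1 + (l - m) \geq m + l$, so the choice $j + 1 = N - l + 1$, i.e., $j = N - l$, lies in the range $[m, N-1]$. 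The corresponding factor
\[
u\bigl(qt^{h_\tau - 1}\bigr) = \frac{(t - qt^{h_\tau - 1})(1 - qt^{h_\tau})}{(1 - qt^{h_\tau - 1})^2}
\]
contributes the numerator $(1 - qt^{h_\tau})$ which vanishes at $q = t^{-h_\tau}$. No other factor in the product contributes a pole or a zero at this value: a vanishing denominator would require $c(j+1, S_0) = -l$, and the other numerator factor $t - qt^b = t(1 - qt^{b-1})$ vanishes only at $c(j+1, S_0) = -l - 1$, both outside the content range $[1-l, \tau_1 - 1]$. Hence $M_{\alpha, S_0}\mathcal{D}_m = 0$ at $q = t^{-h_\tau}$.

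For $1 \leq i < m$, the consecutive entries $i$ and $i+1$ lie in the same column $\tau_1$ of $S_0$ (with $i+1$ directly above $i$), and since $r_\alpha(i) = i$, the rule for $\operatorname{col}(r_\alpha(i), S_0) = \operatorname{col}(r_\alpha(i)+1, S_0)$ gives $M_{\alpha, S_0}\boldsymbol{T}_i = -M_{\alpha, S_0}$. Iterating $\mathcal{D}_i = \tfrac{1}{t}\boldsymbol{T}_i \mathcal{D}_{i+1}\boldsymbol{T}_i$ then yields
\[
M_{\alpha, S_0}\mathcal{D}_i = \bigl(-\tfrac{1}{t}\bigr)^{m-i} M_{\alpha, S_0}\mathcal{D}_m \, \boldsymbol{T}_{m-1}\cdots \boldsymbol{T}_i,
\]
which vanishes at $q = t^{-h_\tau}$. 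The main obstacle, handled in the middle paragraph, is the bookkeeping of which factor in the Lemma actually supplies the zero; it is resolved by observing that the extreme content difference $h_\tau - 1$ in $S_0$ occurs precisely between the top of the last column (entry $m$, content $\tau_1 - 1$) and the bottom of the first column (entry $N - l + 1$, content $1 - l$), symmetrically to the pair $\tau_l$ and $N - \tau_1 + 1$ used in $S_1$ for the preceding proposition.
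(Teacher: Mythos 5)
Your proof is correct and takes essentially the same route as the paper's: Proposition \ref{zerodiff} for $i>m$, the preceding lemma for $i=m$ with the zero supplied by the numerator factor $1-qt^{h_\tau}$ at $j=N-l$ (coming from the content gap between entry $m$ at $(1,\tau_1)$ and entry $N-l+1$ at $(l,1)$), and $M_{\alpha,S_0}\boldsymbol{T}_i=-M_{\alpha,S_0}$ to propagate the vanishing to $1\le i<m$. Your added bookkeeping (that $j=N-l$ lies in $[m,N-1]$ and that no denominator vanishes at $q=t^{-h_\tau}$) only makes explicit what the paper leaves implicit.
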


\begin{proof}
By the lemma%
\[
M_{\alpha,S_{0}}\mathcal{D}_{m}=t^{m-N}\prod\limits_{j=m}^{N-1}u\left(
qt^{\tau_{1}-1-c\left(  j+1,S_{1}\right)  }\right)  M_{\alpha^{\left(
N\right)  },S_{0}}\mathcal{D}_{N}\boldsymbol{T}_{N-1}\cdots\boldsymbol{T}%
_{m}.
\]
The factors in the denominator of the product are of the form $1-qt^{\tau
_{1}-1-c\left(  j+1,S_{1}\right)  }$ with $c\left(  j+1,S_{1}\right)  \geq1-l$
so that $\tau_{1}-1-c\left(  j+1,S_{1}\right)  \leq\tau_{1}+l-2<h_{\tau}$.
Furthermore the numerator factor at $j=N-l$ is $\left(  t-qt^{\tau_{1}%
+l-2}\right)  \left(  1-qt^{\tau_{1}+l-1}\right)  $ which vanishes at
$qt^{h_{\tau}}=1$. The rest of the argument is as in the previous proposition
with the difference that $M_{\alpha,S_{0}}\boldsymbol{T}_{i}=-M_{\alpha,S_{0}%
}$ for $1\leq i<m$.
\end{proof}

In conclusion we have constructed a symmetric bilinear form on $\mathcal{P}%
_{\tau}$ for which the operators $T_{i}$ and $\boldsymbol{\xi}_{i}$ are
self-adjoint, the Macdonald polynomials $M_{\alpha,S}$ are mutually
orthogonal, and the form is positive-definite for $q>0,q\neq1$ and
$\min\left(  q^{-1/h_{\tau}},q^{1/h_{\tau}}\right)  <t<\max\left(
q^{-1/h_{\tau}},q^{1/h_{\tau}}\right)  $) where $h_{\tau}=\tau_{1}+\ell\left(
\tau\right)  -1$. The bound is sharp, as demonstrated by the existence of
singular polynomials for $q=t^{\pm h_{\tau}}$.


\begin{thebibliography}{9}                                                                                                %


\bibitem {BF1997}T. H. Baker, P. J. Forrester, A $q$-analogue of the type $A$
Dunkl operator and integral kernel, \textit{Int Math Res Notices }\textbf{14}
(1997), 667-686, arXiv:q-alg/9701039.

\bibitem {DJ1986}R. Dipper and G. James, Representations of Hecke algebras of
general linear groups, \textit{Proc. London Math. Soc.} (3) \textbf{52}
(1986), 2-52.

\bibitem {DL2012}C. F. Dunkl and J.-G. Luque, Vector valued Macdonald
polynomials, \textit{S\'{e}m. Lothar. Combin.} \textbf{B66b} (2011), 68 pp.

\bibitem {G2010}S. Griffeth, Orthogonal functions generalizing Jack
polynomials, \textit{Trans. Amer. Math. Soc.} \textbf{362} (2010), 6131-6157, arXiv:0707.0251

\bibitem {L2001}A. Lascoux, Yang-Baxter graphs, Jack and Macdonald
polynomials, \textit{Ann. of Combin. }\textbf{5} (2001), 397-424.
\end{thebibliography}
\end{document}